\documentclass[12pt]{extarticle}
\usepackage{amsmath, amsthm, amssymb, hyperref, color}
\usepackage[shortlabels]{enumitem}
\usepackage{graphicx}
\usepackage{makecell}
\usepackage[final]{pdfpages}
\setboolean{@twoside}{false}
\usepackage{pdfpages}
\usepackage{tabularx}
\usepackage{caption}
\usepackage{subcaption}
\usepackage{scalefnt}
\usepackage{verbatim}
\tolerance 10000
\headheight 0in
\headsep 0in
\evensidemargin 0in
\oddsidemargin \evensidemargin
\textwidth 6.5in
\topmargin .25in
\textheight 8.7in

\newtheorem{theorem}{Theorem}
\numberwithin{theorem}{section}
\newtheorem{proposition}[theorem]{Proposition}
\newtheorem{lemma}[theorem]{Lemma}
\newtheorem{corollary}[theorem]{Corollary}

\newtheorem{remark}[theorem]{Remark}
\newtheorem{example}[theorem]{Example}
\newtheorem{conjecture}[theorem]{Conjecture}

\newcommand{\RR}{\mathbb{R}}
\newcommand{\QQ}{\mathbb{Q}}
\newcommand{\PP}{\mathbb{P}}
\newcommand{\CC}{\mathbb{C}}
\newcommand{\ZZ}{\mathbb{Z}}

  \date{}

\title{\textbf{Sixty-Four Curves of Degree Six}}

\author{Nidhi Kaihnsa, Mario Kummer, Daniel Plaumann, \\
         Mahsa Sayyary Namin, and Bernd Sturmfels}

\begin{document}
\maketitle

\begin{abstract} \noindent We present a computational study of smooth
  curves of degree six in the real projective plane. In the
  Rokhlin--Nikulin classification, there are $56$ topological types,
  refined into $64$ rigid isotopy classes.  We developed software that
  determines the topological type of a given sextic and used it to
  compute empirical probability distributions on the various types.
  We list $64$ explicit representatives with integer coefficients, and
  we perturb these to draw many samples from each class.  This allows
  us to explore how many of the bitangents, inflection points and
  tensor eigenvectors are real.  We also study the real tensor rank,
  the construction of quartic surfaces with prescribed topology, and
  the avoidance locus, which is the locus of all real lines that do
  not meet a given sextic. This is a union of up to $46$ convex
  regions, bounded by the dual curve.
\end{abstract}

\section{Topology} \label{1}

A classical theme in real algebraic geometry is the
topological classification of algebraic curves in the real projective plane $\PP^2_\RR$.
Hilbert's 16th problem asks for the {\em topological types} of smooth curves of 
a fixed degree $d$, where two curves $C$ and $C'$
have the same type if some homeomorphism of $\PP^2_\RR \rightarrow \PP^2_\RR$
restricts to a homeomorphism $C_\RR \rightarrow C'_\RR$. 
A finer notion of equivalence comes from the {\em discriminant}
$\Delta$.  This is an irreducible hypersurface of degree $3(d-1)^2$ in
the projective space $\PP^{d(d+3)/2}_\RR$ of curves of degree $d$.
Points on $\Delta$ are singular curves.  The {\em rigid
  isotopy classes} are the connected components of the complement
$\PP^{d(d+3)/2}_\RR \backslash \Delta$. If two curves $C$ and $C'$
are in the same rigid isotopy class, then they have the same
topological type, but the converse is not true in general, as shown by Kharlamov \cite{Kha}.

Hilbert's 16th problem has been solved up to $d  = 7$,
thanks to contributions by many mathematicians,
including Hilbert \cite{Hil},  Rohn \cite{Rohn}, Petrovsky \cite{Pet},
Rokhlin \cite{Rokh},
Gudkov \cite{Gud}, Nikulin \cite{Nik}, Kharlamov \cite{Kha}, and Viro \cite{Viro1, Viroadd, Viro2}.
In this article we focus on the case $d=6$.
 Our point of departure is the following well-known classification of sextics,
 found in    \cite[\S 7]{Viroadd}.

\begin{theorem}[Rokhlin--Nikulin] \label{thm:departure}
The discriminant of plane sextics is a hypersurface of degree $75$ in $\PP^{27}_\RR$
whose complement has $64$ connected components.
The $64$ rigid isotopy types are grouped into $56$ topological types,
with the number of ovals ranging from $0$ to $11$. The distribution is shown in 
Table \ref{tab:counts}.
The 56 types form the partially ordered set in 
Figure~\ref{fig:eins}. 
\end{theorem}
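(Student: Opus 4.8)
The plan is to separate the elementary numerical assertion about the discriminant from the two far deeper classification statements. The degree and ambient dimension are immediate: plane sextics are determined by their $28$ coefficients, so they live in $\PP^{27}_\RR$, and the discriminant of degree-$d$ plane curves is classically of degree $3(d-1)^2$, which equals $75$ for $d=6$. The substance lies in the two-step classification — first topological, then rigid-isotopic — and I would organize the argument accordingly.

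For the topological classification (the $56$ types) I would first record the combinatorial shape of a smooth real sextic: since $6$ is even, every connected component of $C_\RR$ is an \emph{oval}, a two-sided circle bounding a disk in $\PP^2_\RR$, and the topological type is encoded by the nesting arrangement of these ovals. Harnack's bound gives at most $\binom{5}{2}+1 = 11$ ovals, which accounts for the stated range $0$ to $11$. I would then prune the a priori list of arrangements using the standard hierarchy of obstructions: Bezout's theorem restricts how deeply ovals nest and rules out many configurations; the Petrovsky--Oleinik inequalities bound the signed count of even and odd ovals; the Gudkov--Rokhlin congruence $p - n \equiv 1 \pmod{8}$ constrains the extremal ($M$-)curves, where $p$ and $n$ count even and odd ovals; and, for dividing curves (type~I, where complex conjugation splits $C_\CC \setminus C_\RR$ into two halves), Rokhlin's complex orientation formula imposes a further Diophantine relation among the oriented ovals. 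Each surviving candidate must then be realized by an explicit curve via Hilbert-style perturbation, Harnack's construction, and Viro's patchworking; carrying this out shows that exactly $56$ types occur.

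The refinement to $64$ rigid isotopy classes is the hardest part and rests on the theory of $K3$ surfaces. To a smooth real sextic $C$ I would associate the double cover $X$ of $\PP^2$ branched along $C$, a $K3$ surface carrying a degree-$2$ polarization together with the anti-holomorphic involution induced by the real structure; a rigid isotopy of $C$ lifts to a deformation of this real polarized $K3$ surface, so the connected components of $\PP^{27}_\RR \setminus \Delta$ correspond to deformation classes of such data. By the Torelli theorem and surjectivity of the period map for $K3$ surfaces, these deformation classes are governed by purely arithmetic invariants of the induced involution on the $K3$ lattice $H^2(X,\ZZ)$. Nikulin's classification of integral involutions on this lattice then produces exactly $64$ classes, and comparison with the topological list identifies the $8$ types that split into two rigid isotopy classes each, so that $48 + 2\cdot 8 = 64$ and $48 + 8 = 56$. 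The delicate point is establishing that these arithmetic invariants are \emph{complete}, i.e. that the real period map both separates and correctly connects the chambers — which is exactly where Nikulin's lattice-theoretic machinery does the essential work, and which I expect to be the main obstacle.

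Finally, the poset in Figure~\ref{fig:eins} records the adjacency structure of the chambers. I would declare one type to lie below another when the corresponding chambers of $\PP^{27}_\RR \setminus \Delta$ share a wall of $\Delta$ crossed at a generic nodal sextic — so that a single oval is created or destroyed at a solitary node, or two ovals merge or a nesting changes at a crossing node. Reading off these codimension-one wall-crossings from the constructions in the second step yields the Hasse diagram.
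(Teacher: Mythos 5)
The paper never proves this theorem. It is the classical Rokhlin--Nikulin classification, stated as the point of departure and attributed to \cite{Rokh}, \cite{Nik}, and \cite[\S 7]{Viroadd}, so there is no in-paper argument to compare yours against line by line. Judged against the literature the paper cites, your outline reconstructs the actual proof architecture faithfully: the elementary count ($28$ coefficients giving $\PP^{27}$, discriminant degree $3(d-1)^2=75$); the topological classification in the style of Gudkov, i.e.~Harnack's bound of $11$ ovals, the Bezout, Petrovsky, Gudkov--Rokhlin congruence and Rokhlin orientation-formula restrictions, with realization by Harnack--Hilbert--Viro constructions; and the rigid isotopy refinement exactly as Nikulin obtained it, via the double cover K3 surface, global Torelli and surjectivity of the period map, and the arithmetic of integral involutions on the K3 lattice. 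The paper itself signals this last route by remarking that the $64$-class result first appeared in Nikulin's paper on the arithmetic of real K3 surfaces, and your bookkeeping ($48+8=56$, $48+2\cdot 8 = 64$) matches (\ref{eq:8areboth}).

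Two caveats. First, your proposal is a roadmap that delegates all of the hard content to the cited big theorems (completeness of the prohibitions, completeness of Nikulin's lattice invariants, realizability of every surviving arrangement); that is unavoidable for a theorem of this magnitude, but it means the proposal certifies the statement only modulo that literature, exactly as the paper does. Second, on the poset: in the paper the cover relations of Figure~\ref{fig:eins} are the combinatorial operations of shrinking or fusing ovals, and whether each such edge is actually realized by a generic codimension-one wall of $\Delta$ is a separate, nontrivial result (Itenberg, quoted as Theorem~\ref{thm:itenberg}: every edge except the one between (hyp) and (11) is realized by shrinking, and not every edge is realized by fusing). Your definition of the order via shared generic nodal walls is therefore not automatically the same poset; identifying the two requires Itenberg's theorem, which you should cite at that step rather than treat the equivalence as a definition.
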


The $64$ types in Theorem \ref{thm:departure} were known to 
Rokhlin \cite{Rokh}. The classification was completed 
by Nikulin. It first appeared in his
paper \cite{Nik} on the arithmetic of real K3 surfaces.

\begin{table}[t]
 \begin{tabular}{|l|c|c|c|c|c|c|c|c|c|c|c|c||c|}
  \hline Number of ovals &0&1&2&3&4&5&6&7&8&9&10&11 & {\rm total} \\\hline
   Count of rigid isotopy types &1&1&2&4&4&7&6&10&8&12&6&3 & 64 \\\hline
  Count of topological types &1&1&2&4&4&5&6&7&8&9&6&3 & 56 \\\hline
 \end{tabular}
 \caption{\label{tab:counts} Rokhlin--Nikulin classification of smooth sextics in the real projective plane}
\end{table}

     Every connected component $C_0$ of a smooth curve
     $C$ in $\PP^2_\RR$ is homeomorphic to a circle. If the complement
     $\PP^2_\RR \backslash C_0$ is disconnected, then $C_0$ is called
     an {\em oval}, otherwise a {\em pseudoline}. If $C$ has even
     degree, then all connected components are ovals. A curve of odd
     degree has exactly one pseudoline. 
     The two connected components of the complement of an oval are called
     the {\em inside} and the {\em outside}. The former is homeomorphic
     to a disk, the latter to  a M\"obius strip.     An oval $C_0$
     {\em contains} another oval $C_1$ if $C_1$ lies in the inside
     of $C_0$. In that case, $C_0$ and $C_1$ are {\em nested ovals}.
     An oval is {\em empty} if it contains no other
     oval. The topological type of the curve $C$ is determined by the number of
     ovals together with the information of how these ovals contain
     each other.
We denote the type of a smooth plane sextic $C$ in~$\PP_\RR^2$~by
\begin{enumerate}
 \item[$\bullet$]  $\,\,k\,\,\,$ if $C$ consists of $k$ empty ovals;
 \vspace{-0.1in}
 \item[$\bullet$] $\,(k1)\,l\,\,$ if $C$ consists of an oval $C_0$
   containing $k$ empty ovals and of $l$ further empty ovals lying
   outside $C_0$;
 \vspace{-0.1in}
 \item[$\bullet$]   $\,({\rm hyp})\,\,$ if $C$ consists of three
   nested ovals (where 'hyp' stands for 'hyperbolic').
\end{enumerate}

\begin{figure} 
$ \!\!\!\!\!\!\!\!\!\! $ \includegraphics[height=20cm]{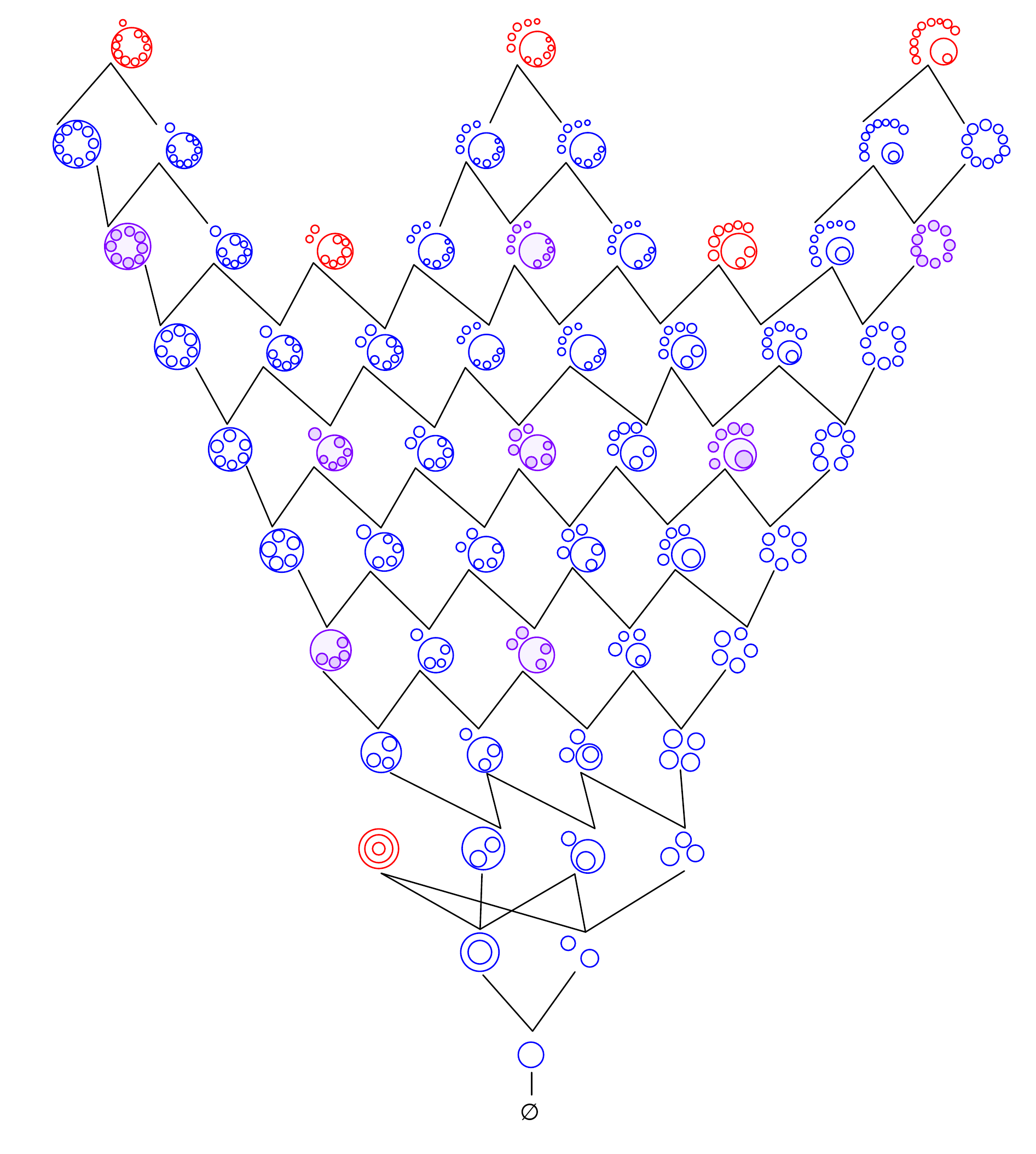}
\vspace{-0.3in}
    \caption{\label{fig:eins} The  56 types of smooth plane sextics form a partially ordered set.  
    The color code indicates whether the real curve divides its Riemann surface.
    The red curves are dividing,
    the blue curves are non-dividing, and the purple curves
    can be either dividing or non-dividing.}
     \end{figure}

Figure~\ref{fig:eins}  is a refinement of Viro's diagram in \cite[Figure 4]{Viro2}.
The top row contains the three types
with $11$ ovals. These are denoted $(91)1$, $(51)5$ and $(11)9$.
The cover relations correspond to either fusing two ovals or
shrinking an oval until it vanishes (cf.~Theorem \ref{thm:itenberg}).
We know from \cite[p.~107]{Nik} that all but eight of the $56$ topological types correspond to exactly one rigid isotopy class, 
while the following eight types consist of two rigid isotopy classes:
\begin{equation}
\label{eq:8areboth}
(41) \quad (21)2 \qquad 
(51)\, 1 \qquad (31)3 \qquad (11)5 \qquad 
(81) \qquad (41)4 \qquad 9 . \qquad
\end{equation}
For an irreducible curve $C$ in $\PP^2_\RR$, the set
$C_\mathbb{C}\backslash C_\mathbb{R}$ of non-real points in the
Riemann surface has either one or two connected components. In the
latter case we say that $C$ is \textit{dividing}.
 
The rigid isotopy type of a smooth plane sextic determines whether it
is dividing or not. More precisely,                                                                                                                                                                                                                                                                                                                                                                                                                                                                                                                                                                                        the subset of $\PP^{27}_\RR$ consisting of all dividing sextics is the
 closure of the union of $14$ rigid isotopy types. First, there
 are the eight topological types in  (\ref{eq:8areboth}). These can be dividing or not,
 by    \cite[Remark 3.10.10]{Nik}. This explains the difference between 
   the second row and third row of the table in Theorem \ref{thm:departure}.
   In addition, there are six topological types that are necessarily dividing.
   These are precisely the maximal elements in the poset of sextics:
   \begin{equation}
   \label{eq:6aredividing}
 (91)1 \qquad (51)5 \qquad (11)9 \qquad (61)2 \qquad (21)6 \qquad ({\rm hyp}). \qquad
\end{equation}

In summary, of the $56$ topological types of smooth plane sextics,
precisely $42$ types are non-dividing. The six types in (\ref{eq:6aredividing}) are dividing,
and the eight types in (\ref{eq:8areboth}) can be dividing or non-dividing.
Hence, there are $14$ rigid isotopy types that are dividing.
This accounts for all $\,64$ rigid isotopy types (connected components
of $\PP^{27}_\RR \backslash \Delta$) in the census of
Theorem~\ref{thm:departure}.

This paper presents an experimental study of the objects above,
conducted with a view towards Applied Algebraic Geometry.
Numerous emerging applications, notably in the analysis of 
data from the life sciences, now rely
on computational tools from topology and algebraic geometry. 
A long-term goal that motivated this project is the 
development of connections between such applications
and existing knowledge on  the topology of real algebraic varieties.

The ecosystem to be explored in this particular study is the $27$-dimensional space
of plane sextic curves.
Our focus lies on experimentation and exact computation with ternary sextics
over the integers. Thus, our model organisms are
homogeneous polynomials in $\ZZ[x,y,z]_6$.

We now summarize the contributions of this paper.
In Section 2 we present a list of $64$ polynomials in $\ZZ[x,y,z]_6$
that serve as representatives for the $64$ rigid isotopy types.
In Section 3 we describe methods for classifying
a  given ternary sextic according to Theorem~\ref{thm:departure}.
 To determine the topological type we wrote fast code in {\tt Mathematica}
 based on the built-in tool for 
 {\em cylindrical algebraic decomposition} (CAD, \cite{CWB}).
 This is used to sample sextics from natural probability distributions on
$\RR[x,y,z]_6 \simeq \RR^{28}$, so as to find the empirical
distributions on the $56$ topological types.
  Distinguishing between dividing and non-dividing types is harder.
Our primary tool for this is Proposition \ref{prop:turningout}.
For an alternative approach see \cite{kalla}.
  
  In Section 4 we present a method for computing
    the discriminant, and we
  discuss our derivation of the $64$ polynomial representatives.
    Section 5 concerns the subdivision of the dual
  projective plane $(\PP^2)^\vee_\RR$ by a curve 
  $C^\vee$ of degree $30$, namely   that dual
  to a given sextic $C$.  The nodes of $C^\vee$ are the $324$
  bitangents of $C$. We study how many of them are real for
  each of the $64$ types. These numbers do not depend on
  the topological or rigid isotopy type alone.
  They are reported in Table~\ref{tab:flexetc}.
    Real lines that miss
  $C_\RR$ form the avoidance locus $\mathcal{A}_C$. This is a union of up to $46$
  convex regions, bounded by the dual curve.  In
  Section 6 we explore inflection points, tensor
  eigenvectors, real tensor rank, and connections to K3 surfaces.

Many new results and questions can be derived
by the computational framework developed in this paper.
Here is one example. It concerns 
reducible sextic curves consisting of six distinct lines.
This $12$-dimensional family in $\PP^{27}_\RR$
is the {\em Chow variety}
of factorizable forms.

\begin{proposition} \label{prop:thirtyfive}
Configurations of six general lines  appear in the closure of precisely $35$ 
of the $64$ rigid isotopy classes.
These are the classes that meet the Chow variety in a generic point.
 These $35$ classes are marked with an asterisk in Table~\ref{tab:flexetc},
 in the column on eigenvectors.
\end{proposition}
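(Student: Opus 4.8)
The plan is to read the second sentence of the proposition as the conceptual content and to prove it by a local analysis of the discriminant near a point of the Chow variety, followed by a finite enumeration. A generic point of the Chow variety is a product $F=\ell_1\cdots\ell_6$ of six lines in general position, i.e.\ a nodal sextic whose only singularities are the $\binom{6}{2}=15$ nodes at the pairwise intersection points. Since the Chow variety lies inside the discriminant $\Delta$, the rigid isotopy classes $R$ whose closure $\overline R$ contains this point are exactly the connected components of $\PP^{27}_\RR\setminus\Delta$ that are locally adjacent to $F$. I would first show that these adjacent components are precisely the small smoothings of the $15$ nodes. For this, Brusotti's theorem guarantees that the nodes of a real nodal plane curve can be smoothed independently and in all $2^{15}$ combinations by arbitrarily small perturbations within the space of real sextics; conversely, the versal picture of a single node shows that every nearby smooth curve realizes one such combination. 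Hence ``$R$ meets the Chow variety in a generic point'' is equivalent to ``$R$ contains a small Brusotti smoothing of a generic six-line arrangement,'' which is exactly the reformulation asserted in the proposition.

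The second step is to make the smoothings explicit and enumerate them. The arrangement cuts $\PP^2_\RR$ into $16$ faces (by Euler characteristic, $15-30+16=1$), two-coloured by the sign of $F$, with adjacent faces carrying opposite signs. A small perturbation $F+\varepsilon G$ resolves the node at an intersection point $p$ in one of two ways according to the sign of $G(p)$, and the evaluation map $\RR[x,y,z]_6\to\RR^{15}$ at the $15$ nodes is surjective (the nodes impose independent conditions on sextics), so every sign vector in $\{\pm\}^{15}$ is attained; the resulting oval configuration is then read off from the face-colouring together with the chosen resolutions. Because the type of such a smoothing depends on the combinatorial type of the arrangement, I would run this over each of the finitely many combinatorial types of six lines in general position, enumerate the $2^{15}$ sign patterns in each, and determine the topological type of every smoothing with the cylindrical-algebraic-decomposition classifier of Section 3.

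The third step is the rigid-isotopy refinement, which is where the real work lies. Topological type alone does not pin down the rigid isotopy class for the eight doubled types in (\ref{eq:8areboth}), so for each smoothing realizing one of those types I must further decide whether it is dividing or non-dividing, using Proposition \ref{prop:turningout} as the primary tool (or, equivalently, a complex-orientation computation for the perturbed arrangement). Collecting the distinct rigid isotopy classes that occur over all arrangement types and all sign patterns then yields the asserted count, which I expect to be exactly the $35$ classes marked with an asterisk in Table \ref{tab:flexetc}; in particular I anticipate that the deeply nested types such as $({\rm hyp})$ cannot arise, since smoothing a flat line arrangement produces only limited oval nesting.

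I expect the main obstacle to be twofold. First, the dividing versus non-dividing determination for the eight ambiguous types must be carried out carefully and reliably, since it is exactly what separates the $64$ rigid isotopy classes from the $56$ topological types and therefore controls the final count. Second, one must be sure the enumeration is complete: all combinatorial types of generic six-line arrangements have to be considered, and within each the claim that all $2^{15}$ resolutions are genuinely realizable by real (hence, after approximation, integer) sextics --- so that no adjacent component is missed --- rests on Brusotti's theorem together with the surjectivity of evaluation at the $15$ nodes. Verifying that the union over all these cases stabilizes at precisely $35$, with none of the remaining $29$ classes ever appearing, is the crux.
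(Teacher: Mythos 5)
Your method is essentially the paper's own: the proof given in Section~6 is literally the computation you describe, namely perturbing each of the combinatorial types of arrangements of six general lines in $\PP^2_\RR$ (the paper uses that there are exactly four such types, a fact you leave as ``finitely many'') and classifying the resulting smooth sextics with the CAD-based classifier; the paper then states ``the computation we described is the proof.'' What your proposal adds is a genuine strengthening on the completeness side. The paper's argument is a search process, which by itself only certifies that \emph{at least} $35$ classes contain a generic six-line configuration in their closure; your reduction via the local structure of the discriminant near a $15$-nodal curve (fifteen smooth branches, local complement components indexed by sign vectors in $\{\pm\}^{15}$), combined with Brusotti's theorem \cite{Brus} and the surjectivity of evaluation at the nodes (e.g.\ $\ell_3\ell_4\ell_5\ell_6$ times a generic conic vanishes at all nodes except $\ell_1\cap\ell_2$), shows that the adjacent classes are \emph{exactly} those realized by the $2^{15}$ smoothings, so exhaustive enumeration also certifies the upper bound. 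Your attention to the dividing/non-dividing refinement is likewise necessary: four of the eight ambiguous topological types contribute \emph{both} rigid isotopy classes to the $35$ (both $(41)$nd and $(41)$d, both $(21)2$nd and $(21)2$d, both $(11)5$nd and $(11)5$d, both $9$nd and $9$d are starred), while $(31)3$ contributes only its non-dividing class. For smoothings of a union of six lines, the most direct tool is Fiedler's orientation criterion \cite{Fiedler} quoted in Section~4 (coherent smoothings of all crossings give dividing curves, mixed smoothings give non-dividing ones), rather than Proposition~\ref{prop:turningout}.

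One concrete claim in your proposal is false, and it contradicts your own stated conclusion: $({\rm hyp})$, the type with three nested ovals, \emph{is} one of the $35$ starred classes in Table~\ref{tab:flexetc}. Smoothing a generic six-line arrangement can produce depth-three nesting; the heuristic that ``a flat line arrangement produces only limited oval nesting'' is valid only for the special perturbations $\prod\ell_i = \pm\epsilon$, whose ovals follow boundaries of faces of the arrangement, but mixed sign choices at the fifteen nodes let the smoothed curve merge faces across edges and create nested ovals. Since your enumeration, if actually carried out, would discover this, the error does not invalidate your method---but it is precisely the kind of prediction that the computation exists to test. The classes that genuinely fail to appear are of a different nature: the empty curve, and most of the classes with many ovals (no class with $11$ ovals is starred, and among the $10$-oval classes only type $10$ is).
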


\section{Representatives}

We shall derive the following result by furnishing explicit polynomial representatives.

\begin{proposition}
\label{prop:bigintegers}
Each of the $64$ rigid isotopy types can be realized by
a ternary sextic in $\ZZ[x,y,z]_6$ whose integer coefficients
have absolute value less than $1.5  \times 10^{38}$.
\end{proposition}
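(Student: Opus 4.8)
The plan is to prove the statement constructively, by exhibiting one integer sextic for each of the $64$ rigid isotopy types and then certifying both its topological type and its dividing behaviour. I would organize the $64$ constructions around two complementary engines. The first is \emph{combinatorial patchworking} in the sense of Viro \cite{Viro2}: over the standard Newton triangle with vertices $(0,0)$, $(6,0)$, $(0,6)$ one fixes a sign distribution on the $28$ lattice points together with a regular (convex) triangulation induced by an integer height function $\omega$, and assembles the monomials $\pm t^{\omega_{ij}} x^i y^j z^{6-i-j}$. For a suitable integer value of $t$ this produces an integer ternary sextic whose real topology is read off directly from the combinatorial data, and the maximal configurations --- in particular the three $11$-oval types $(91)1$, $(51)5$, $(11)9$ at the top of Figure~\ref{fig:eins} --- are naturally reachable this way. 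The second engine is \emph{perturbation of reducible curves}: starting from a product of conics, ellipses and lines whose real locus already displays the desired nesting pattern, one adds a small multiple of a generic sextic and invokes Brusotti's theorem to smooth the nodes independently with prescribed signs, so that the sign choice dictates which branches reconnect and hence which ovals are created, merged, or nested.

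The poset structure recorded in Figure~\ref{fig:eins}, whose cover relations are exactly the fusing and shrinking moves of Theorem~\ref{thm:itenberg}, lets me treat neighbouring types uniformly: once a representative of one type is in hand, its poset neighbours are obtained by perturbing in a controlled direction. This is especially convenient for the $48$ types carrying a single rigid isotopy class. Verification then proceeds in two stages. For each candidate I first check smoothness (nonvanishing discriminant, equivalently no common real or complex zero of the partials) and confirm the topological type --- the number of ovals and their nesting --- using the cylindrical algebraic decomposition routine of Section~3, which is exact over $\QQ$.

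The finer rigid isotopy data is resolved next. For the eight topological types in (\ref{eq:8areboth}) that split into two classes, and for the six necessarily dividing types in (\ref{eq:6aredividing}), the distinguishing invariant is whether the curve is dividing, and I certify this with Proposition~\ref{prop:turningout}. Together with Theorem~\ref{thm:departure}, matching both the topological type and the dividing attribute pins down the rigid isotopy class unambiguously; hence providing one representative for each of the $56$ topological types, together with a second, complementary (dividing versus non-dividing) representative for each of the eight types in (\ref{eq:8areboth}), accounts for all $64$ classes. The coefficient bound itself is pure bookkeeping rather than a delicate estimate: patchworked representatives have coefficients $\pm t^{\omega_{ij}}$, so their size is controlled by $t^{\max\omega}$, while perturbed representatives are rational and are cleared to integers by the single common denominator coming from the small rational perturbation parameter. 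Recording the maximum absolute coefficient over all $64$ polynomials yields the stated ceiling $1.5\times 10^{38}$.

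I expect the genuine difficulty to lie not in any single construction but in two places simultaneously. First, realizing the dividing versus non-dividing distinction for the eight pairs is subtle, since there the real topology is identical and only the complex structure differs, so the construction must be steered by the invariant of Proposition~\ref{prop:turningout} rather than by the visible oval picture. Second, keeping the integers small is in tension with the fragile configurations: the deeply nested and maximal types force the perturbation parameter to be tiny in order to preserve small ovals without losing or fusing them, and clearing the denominator of that parameter is precisely what drives the coefficient size up toward the $10^{38}$ range. Balancing these two competing demands, rather than any one construction, is the crux.
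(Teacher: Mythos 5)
Your overall skeleton -- exhibit $64$ explicit integer sextics, verify the topological type with an exact CAD-based classifier, and treat the eight Nikulin pairs separately -- is the same as the paper's. But two of your steps would fail as described. The more serious gap is the certification of the \emph{dividing} representatives. Proposition~\ref{prop:turningout} is a one-way statement: turning inside out sends a dividing curve to a non-dividing one, while a non-dividing curve can go either way. So it can be used (as the paper does, cf.\ Figure~\ref{fig:turning}) to manufacture a certified \emph{non-dividing} member of a pair from a known dividing one, but it can never certify that a curve \emph{is} dividing. The CAD classifier only sees the real locus, and the paper is explicit that no feasible a posteriori test exists for genus $10$ (the Kalla--Klein method \cite{kalla} is not currently applicable). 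Hence the dividing member of each of the eight pairs in \eqref{eq:8areboth} must be certified \emph{by construction}; the paper does this with Fiedler's orientation criterion \cite{Fiedler} (and \cite{Gab3} for type $(41)4$): smooth \emph{all} real intersection points of two dividing curves of degrees $2+4$ or $1+5$, with prescribed orientations, in an orientation-compatible way, and the result is dividing; mix the two smoothings and it is non-dividing. Without this ingredient, eight of the $64$ classes are out of reach in your scheme. (For the six types in \eqref{eq:6aredividing} nothing needs certifying, since they are dividing automatically.)

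The second gap is the coefficient bound, which you dismiss as bookkeeping. The paper's experience is the opposite of what you assume: converting combinatorial patchworking data into actual polynomials in $\ZZ[x,y,z]_6$ produces coefficients that are far too large, and patchworking was abandoned for all but three types for exactly this reason (it was kept essentially only for $(51)5$). Most representatives instead come from classical Harnack--Hilbert--Gudkov perturbations of unions of conics, quartics and lines. Even then, the raw constructions do not meet $1.5\times 10^{38}$; the bound is the output of a dedicated coefficient-reduction stage, which evaluates the discriminant along pencils via Sylvester's $45\times 45$ determinant (Proposition~\ref{prop:sylvester}) and then shrinks individual coefficients, or forces divisibility by a prime, without crossing $\Delta$. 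Your proposal has no analogue of this stage, and your claim that patchworked coefficients $\pm t^{\omega_{ij}}$ stay within the bound is precisely where the tension you correctly identify (tiny perturbation parameters for fragile nested types versus integer clearing) would break the argument.
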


We list $64$ polynomials with integer coefficients
that represent the $64$ rigid isotopy types of smooth sextic curves 
in $\PP^2_\RR$. 
This list is available in a computer-algebra-friendly format at
\begin{equation}
\label{eq:url}
 \hbox{\url{http://personal-homepages.mis.mpg.de/kummer/sixtyfour.html}} 
 \end{equation}
 Each of our sextics is labeled by its topological type
and whether it is dividing (d) or non-dividing (nd)
in its Riemann surface. We begin
with the $35$ types that have at most $7$~ovals:

$$ \begin{small}
\begin{matrix}
    0 & {\rm nd} & x^6+y^6+z^6  \\
     1 & {\rm nd} &  x^6+y^6-z^6 \\      
     (11)  & {\rm nd} &  6(x^4+y^4-z^4)(x^2+y^2-2z^2)+x^5y \\
     2  & {\rm nd} &   (x^4 + y^4 - z^4) ((x + 4z)^2 + (y + 4z)^2 - z^2) + z^6 \\
     (21)  & {\rm nd} &  16 ((x {+} z)^2 + (y {+} z)^2 - z^2) (x^2 + y^2 - 
    7z^2) ((x {-} z)^2 + (y {-} z)^2 - z^2) + x^3y^3 \\
     (11)1 & {\rm nd} &  ((x + 2z)^2 + (y + 2z)^2 - z^2) (x^2 + y^2 - 3z^2) (x^2 + y^2 - 
    z^2) + x^5y  \\
     3  & {\rm nd} &  (x^2 + y^2 - z^2)(x^2 + y^2 - 2z^2)(x^2 + y^2 - 3z^2) + x^6  \\
     ({\rm hyp}) & {\rm d} & 6 (x^2 + y^2 - z^2)(x^2 + y^2 - 2z^2)(x^2 + y^2 - 3 z^2) + x^3 y^3 \\
          (31)  & {\rm nd} &  (10 (x^4 - x^3 z + 2 x^2 y^2 + 3 x y^2 z + y^4) + z^4) (x^2 + y^2 - 
    z^2) + x^5 y \\
     (21)1 & {\rm nd} &  (10 (x^4 - x^3 z + 2 x^2 y^2 + 3 x y^2 z + y^4) + z^4) ((x + z)^2 + 
    y^2 - 2 z^2) + x^5 y  \\
       (11)2 & {\rm nd} &  (10 (x^4 - x^3 z + 2 x^2 y^2 + 3 x y^2 z + y^4) + 
    z^4) (x^2 + (y - z)^2 - z^2) + x^5 y  \\
     4  & {\rm nd} & x^6+y^6+z^6-4x^2y^2z^2 \\
     (41)  & {\rm nd} &  ((x^2 + 3 y^2 - 20 z^2) (4 x^2 + y^2 - 16 z^2) + 18 x^2 z^2) (x^2 + 
    y^2 - 10 z^2) - 2 z^6 \\
(41) & {\rm d} & \!\!\!\! 10 (((x^2 {+} 2 y^2 {-} 16 z^2) (2 x^2 {+} y^2 {-} 16 z^2) + x^2 y^2) (10 x +
       y + 5 z) + x z^4) (10 x - y - 8 z) - x z^5 \\
     (31)1 & {\rm nd} &  ((x^2 + 3 y^2 - 17 z^2) (3 x^2 + y^2 - 10 z^2) + 15 x^2z^2) (x^2 + 
    4 (y + z)^2 - 25 z^2) + x^3y^3 \\
     (21)2 & {\rm nd} &  ((x^2 {+} 3 y^2 {-} 20 z^2) (4 x^2 {+} y^2 {-} 16 z^2) + 
    18 x^2 z^2) ((x + y)^2 + 20 (x {-} y {-} 3 z)^2 - 24 z^2) + (y {-}     x) z^5\\
     (21)2 & {\rm d} &  ((x^2 + 3 y^2 - 20 z^2) (4 x^2 + y^2 - 16 z^2) + 18 x^2 z^2) (x^2 + 
    8 y^2 - 16 z^2) - 4 z^6 \\
     (11)3 & {\rm nd} &  ((x^2 + 2 y^2 - 30 z^2) (3 x^2 + y^2 - 20 z^2) + 
    15 x^2z^2) (x^2 + (4 y + 16 z)^2 - 15 z^2) + x^3y^3 \\
     5  & {\rm nd} &  4 ((x^2 + 2 y^2 - 4 z^2) (2 x^2 + y^2 - 4 z^2) + z^4) (x^2 + y^2 - 
    z^2) + x^3 y^3 \\
     (51) & {\rm nd} &  (  3 x^2 + 4 x y + 2 y^2-4 z^2) (x^2 + 2 (y - z)^2 - 8 z^2) (2 x^2 + 
    y^2 - 3 z^2) - z^6  \\ 
      (41)1 & {\rm nd} &  ( 4 x^2 + 6 x (y {-} z) + 3 (y {-} z)^2 -14 z^2) (x^2 + 
    5 (y {-} 2 z)^2 - 9 z^2) (2 x^2 + (y {-} z)^2 - 15 z^2) - y z^5 \\
        (31)2 & {\rm nd} &  ((x {+} z)^2 + 4 y^2 - 4 z^2) (7 (x {+} z)^2 +y^2 - 
    10 z^2) (  (x {+} z)^2 + 4 (2 (x {+} y) {+} 3 z)^2 -8 z^2) + x z^5  \\
     (21)3 & {\rm nd} & ((x {+} z)^2 + 3 y^2 - 4 z^2) (7 (x {+} z)^2 + y^2 - 
    12 z^2) (  (x {+} z)^2 + 3 (2 (x {+} y) {+} 3 z)^2-5 z^2) + x z^5   \\
    (11)4 & {\rm nd} & ((x^2 + 3 y^2 - 20 z^2) (4 x^2 + y^2 - 16 z^2) + 18 x^2 z^2) (8 x^2 + y^2 - 16 z^2) + (x {+} y) z^5   \\
     6  & {\rm nd} &  (3 x^2 + 5 x y + 2 y^2-7 z^2) (x^2 + 2 (y - z)^2 - 8 z^2) (2 x^2 + 
    y^2 - 5 z^2) - z^6  \\
     (61) & {\rm nd} & (4 x^2 + 4 x y + 3 y^2 - 4 z^2) (x^2 + 3 y^2 - 4 z^2) (4 x^2 + y^2 - 
    4 z^2) - z^6 \\
     (51)1  & {\rm nd} & 30 (((x {-} z)^2 + 3 y^2 - 5 z^2) (3 (x {-} z)^2 {+} y^2 {-} 5 z^2) + 
    x z^3) ((x {-} z)^2 + y^2 - 2 z^2) + (x {-} 2 z) z^5 \\
     (51)1  & {\rm d} & 7((x^2 + 3 (y + z)^2 - 48 z^2) (3 (x + z)^2 + y^2 - 48 z^2) - 
    z^4) (x^2 + y^2 - 26 z^2) + x z^5 + y z^5 \\
     (41)2  & {\rm nd} &  15 (4 x^2 + y^2 - 3 z^2) (x^2 + 3 y^2 - 
    3 z^2) ((4 x - z)^2 + 16 y^2 - 22 z^2) + (5 x z^5 + (y - z)^3 z^3) \\
     (31)3 & {\rm nd} &  34 ((3 x^2 + y^2 - 3 z^2) (x^2 + 8 y^2 - 3 z^2) + x^2 y^2) (2 x^2 - 
    y z - 2 z^2) +(x - 4 z) y z^4  \\
     (31)3 & {\rm d} &  ((x^2 + 3 y^2 - 28 z^2) (4 x^2 + y^2 - 20 z^2) - 
    z^4) ((x + z)^2 + y^2 - 12 z^2) - x z^5 \\
     (21)4 & {\rm nd} &  27 (2 x z - 6 y^2  + 2 y z+ 3 z^2 ) (- (x + y)^2 - 4 y^2  + 2 z^2 ) (5 (x + y)^2 + y^2-4 z^2) - x z^5  \\
     (11)5 & {\rm nd} & \! ((x^2 + 3 y^2 - 20 z^2) (4 x^2 + y^2 - 16 z^2) + 18 x^2 z^2) (16 x^2 + y^2 - 20 z^2) - (x + y) z^5 \\
     (11)5 & {\rm d} & ((x^2 {+} 3 y^2 {-} 20 z^2) (4 x^2 {+} y^2 {-} 16 z^2) + 
    18 x^2 z^2) ((x {+} y)^2 + 20 (x {-} y {-} 3 z)^2 - 24 z^2) + (x {+} y ) z^5\\
     7  & {\rm nd} & 2(4x^2 + y^2 - 4z^2)(x^2 + 4y^2 - 5z^2)(x^2 + y^2 - 4z^2) + 
 3x^4y^2 + x y^5  \\
\end{matrix}
\end{small}
$$

\noindent Next, we have eight topological types with eight ovals, all of which are non-dividing:
$$
\begin{small}
\begin{matrix}
     (71) & {\rm nd} & 2 (x^2 + y^2 - 26 z^2) ( x^2 + 3 (y + z)^2 - 48 z^2) (3 (x + z)^2 + y^2 - 48 z^2) - z^6 \\
          (21)5  & {\rm nd} &   40 (3 x^2 + y^2 - 3 z^2) (x^2 + 8 (y - z)^2 - 3 z^2) (2 x^2 - y z -     2 z^2) - (y^3 z^3 + 2 x z^5 - 2 z^6) \qquad \qquad  \\
       (11)6 & {\rm nd} &  19 (4 x^2 + y^2 - 4 z^2) (x^2 + 8 (y - z)^2 - 3 z^2) (2 x^2 - y z -     2 z^2) - (2 y - 3 z) z^5   \qquad \qquad \\
     8  & {\rm nd} & 12 (x^4 + 2 x^2 y^2 + y^4 - x^3 z + 3 x y^2 z) (7 (8 x + 3 z)^2 +  8 y^2 - 10 z^2) + x^5 y + 2 z^6  \qquad \qquad
\end{matrix}
\end{small}
$$
$$
\begin{tiny}
\begin{matrix}
     (61)1  & {\rm nd} & 
     (160075(5yz{-}x^2)(8(xz{+}15z^2)-(y{-}12z)^2)+109(17x{+}5y{+}72z)(
13x{+}5y{+}42z) (9x{+}5y{+}20z)(2x{+}5y))(5yz{-}x^2)-(x+3z)z^5 \\
      (51)2 & {\rm nd} & \!
      (5435525((y{+}z)z-x^2)((x{+}2z)z-2(y{-}x)^2)+5(25x{-}25y{-}31z)(5x{-}
50y{-}49z)(15x{+}25y{+}27z)(35x{+}25y{+}37z)) ((y{+}z)z-x^2)+x^5y \\
        (41)3 & {\rm nd} &  \!\!
       (14460138((y{+}z)z{-}x^2)((x{+}2z)z-2(y{-}x)^2)+5(25x{-}25y{-}31z)(
5x{-}50y{-}49z)(15x{+}25y{+}27z)(37z{+}35x{+}25y))  ((y{+}z)z{-}x^2)+x^5y \\
         (31)4 & {\rm nd} & 
         (27867506((y{+}z)z-x^2)((x{+}2z)z-2(y{-}2x)^2)+61(6x{+}8y{+}9z)(64y{+}63z
)(15x{-}25y{-}27z)(35x{-}25y{-}37z)) ((y{+}z)z- x^2)+x^5y \\
\end{matrix}
\end{tiny}
$$

\noindent
Among the $12$ rigid isotopy types with $9$ ovals, 
two  are from (\ref{eq:6aredividing}) and three pairs are from~(\ref{eq:8areboth}):
$$
\begin{small}
\begin{matrix}
        (11)7 & {\rm nd} &  23 (3 x^2 + y^2 - 3 z^2) (x^2 + 8 (y - z)^2 - 3 z^2) (2 x^2 - y z - 
    2 z^2) - (2 y - 3 z) z^5  \\
     9  & {\rm nd} & ((x^2 {+} 3 y^2 {-} 20 z^2) (4 x^2 {+} y^2 {-} 16 z^2) + 
    18 x^2 z^2) ((x {+} y)^2 + 20 (x - y - 3 z)^2 - 24 z^2) + y^2 z^4 \\
     9  & {\rm d} & ((x^2 + 3 y^2 - 20 z^2) (4 x^2 + y^2 - 16 z^2) + 18 x^2 z^2) (16 x^2 + y^2 - 20 z^2) + z^6  \\
          (81)  & {\rm d} &   ((x^2 + 3 y^2 - 28 z^2) (4 x^2 + y^2 - 20 z^2) - 
    z^4) (2 x^2 + y^2 - 12 z^2) - z^6 \\
 \end{matrix}
\end{small}
$$
$$
\begin{tiny}
\begin{matrix}
 (81) & {\rm nd} & (1920981  (y z {-} x^2 ) (57 (x {+} z) z - (6 x{-} y {+} 6 z)^2) +
   48  (10 x {+} 7 y {+} 3 z ) (11 x {+} 25 y {+} z) ( 11 x {-} 23 y{-}z) (
     10 x {-} 8 y{-}3 z )) (x^2 {-} y z)+ x^2 y^4 - 61  y^6   \\
(71)1 & {\rm nd} & (529321083 (y z {-} x^2) (53 (x {+} z) z-(6 x{-} y {+} 6 z)^2)+25 (10 x {+} 8 y {+} 3 z) 
(12 x {+} 30 y {+} z) (12 x {-} 32 y {-} z) (10 x {-} 8 y {-}3 z)) (x^2{-}y z)-y^6 \\
(61)2 & {\rm d} & (19157935 (5 y z{-}x^2) (8 (x z{+}15 z^2)-(y{-}12 z)^2)+1185 (17 x {+} 5 y {+} 72 z)
 (13 x {+} 5 y {+} 42 z) (9 x {+} 5 y {+} 20 z) (2 x{+}5 y)) (5 y z{-}x^2)-(x{+}3 z) z^5 \\
(51)3 & {\rm nd} & \!\!\! (28920269 ((y{+}z) z-x^2) ((x{+}2 z) z-2 (y{-}x)^2)+10 (25 x {-} 25 y {-} 31 z) (5 x {-} 50 y {-} 49 z) (15 x {+} 25 y {+} 27 z) (35 x {+} 25 y {+} 37 z))
((y{+}z) z{-}x^2)+x^5 y \\
(41)4 & {\rm nd} &
6761249083262 (68794627464 (1095368 (118 (x^2 + y^2 - 3 z^2) y 
+ (x {-} 2 z) (x {-} 12 z) (x - 13 z))    y + (x - 4 z) (x - 9 z) (x - 10 z) (x - 11 z)) y
\\ & & 
  + (x - 3 z) (x - 5 z) (x - 6 z) (x - 7 z) (x - 8 z)) y - z^6 \\     
(41)4 & {\rm d} & 13278270242890 (52982089012 (1610519 (149 (x^2+y^2-4 z^2) y
 +(x{-}3 z) (x {-}13 z) (x-14 z)) y+(x-5 z) (x-10 z) (x-11 z) (x-12 z)) y \\ & &
  +(x-4 z) (x-6 z) (x-7 z) (x-8 z) (x-9 z)) y-(x-5 z) z^5 \\
(31)5 & {\rm nd} & \! (26894836459 ((y{+}z) z{-}x^2) ((x{+}2 z) z-2 (y{-}2 x)^2)+1880 ( 6x {+} 8y {+} 9z) 
(64 y{+}63 z) ( 15x {-} 25y {-} 27z) (35 x {-} 25y {-} 37 z)) ((y{+}z) z-x^2)+x^5 y \\
(21)6 & {\rm d} &\! \!\!(93678589978 ((y{+}z) z{-}x^2) ((x{+}2 z) z-2 (y{-}2 x)^2)+50949 (6x {+} 8y {+} 9z) 
(18x {-} 72y {-} 73z) (5x {-} 6y {-} 7z) ({-}27 x{+}18 y{+}28 z)) ((y{+}z) z{-}x^2){+}x^5 y
 \end{matrix}
 \end{tiny}
  $$

\noindent The six topological types with ten ovals are all non-dividing:
$$
\begin{tiny}
\begin{matrix}
(91) & {\rm nd} & (40008  (y z - x^2 )  (57  (x + z)  z-(6 x- y + 6 z)^2)+( 10x+ 7y + 3 z)  (11 x+ 25y +  z
)  (11 x-23 y - z)  (10 x- 8y -3 z))  (x^2-y  z)-y^6 \\
(81)1 & {\rm nd} & (622771068  (y z {-} x^2 ) (57 (x {+} z) z - (6 x{-} y {+} 6 z)^2)+35  (10x {+} 8y {+} 3z)  (12x {+} 30y {+} z)  (12x {-} 32y {-} z)  (10x {-} 8y {-} 3z))  (x^2{-}y  z)-y^6 \\
(51)4 & {\rm nd} &
-3401397120  x^6-3195251840  x^5  y-2164525440  x^4  y^2-869728640  x^3  y^3+
332217600  x^2  y^4+316096000  x  y^5+53760001  y^6 \\ & &
\! +1597625920  x^5  z {+}36848468800000000000  x^4  y z  {+}
7988129600000000000  x^3  y^2  z 
{-} 3373286400000000000  x^2  y^3  z
{+} 3824761600000000000  x  y^4  z \\ & &  
+23425600000000000000000000000  y^4  z^2 
{-}1199390720  x^4  z^2 {-}7988129600000000000  x^3  y  z^2 
{-}127552392000000000000000000000  x^2  y^2  z^2 \\ & & 
+1618496000000000000  y^5  z
+764952320  x^3  z^3
 +\underbar{141724880000000000000000000000000000000}  y^3  z^3 
 +3654393600000000000  x^2  y  z^3   \\ & & 
-3824761600000000000  x  y  z^4  -130099200  x^2  z^4
+11712800000000000000000000000  y^2  z^4
+650496000000000000  y  z^5-2  z^6 \\ 
(41)5 & {\rm nd} &
\! -3401397120  x^6 {-}3195251840  x^5  y{-} 2164525440  x^4  y^2 {-} 869728640  x^3  y^3
{+}332217600  x^2  y^4 {+}316096000  x  y^5{+}53760002  y^6{+}1597625920  x^5  z \\ & & +36848468800000000000  x^4  y  z  +7988129600000000000  x^3  y^2  z
-3373286400000000000  x^2  y^3  z +3824761600000000000  x  y^4  z \\ & & 
+1618496000000000000  y^5  z 
 -1199390720  x^4  z^2  -7988129600000000000  x^3  y  z^2 
-127552392000000000000000000000  x^2  y^2  z^2 \\ &  &
+23425600000000000000000000000  y^4  z^2 
{+}764952320  x^3  z^3 
{+}3654393600000000000  x^2  y  z^3 
{-}3824761600000000000  x  y  z^4  {-}130099200  x^2  z^4  \\ & & 
+\underbar{141724880000000000000000000000000000000}  y^3  z^3 
+11712800000000000000000000000  y^2  z^4
+650496000000000000  y  z^5-z^6 \\
(11)8 & {\rm nd} & (227693  (yz-x^2)  ((x+2  z)  z-2  (y-2  z)^2)+(10x- 8y- 3 z)  
(10x- 23y- z)  (11 x+22 y+z)  (10 x+7 y+3z))  (x^2-y  z)+y^6 \\
10 & {\rm nd} &19  x^6-20  x^4  y^2-20  x^2  y^4+19  y^6
-20  x^4  z^2+60  x^2  y^2  z^2-20  y^4  z^2-20
  x^2  z^4-20  y^2  z^4+19  z^6 
\end{matrix}
     \end{tiny}
$$
Finally, here are representatives for the three  types with
the maximum number of ovals:
$$ \begin{tiny}
\begin{matrix} 
(91)1 & {\rm d} & (1941536164 (yz{-}x^2) (60 (x{+}z) z-(6 x{+}6 z{-}y)^2)+
118 (10x {+} 8y {+} 3z)(12x {+} 32y {+} z) (12x {-} 32y {-} z) (10x {-} 8y {-} 3z)) (x^2-y z)-y^6 \\
(51)5 & {\rm d} & -3401397120 x^6-3195251840 x^5 y-2164525440 x^4 y^2-869728640 x^3 y^3
+332217600 x^2 y^4+316096000 x y^5+53760001 y^6
\\ & & +1597625920 x^5 z+36848468800000000000 x^
4 y z+7988129600000000000 x^3 y^2 z-3373286400000000000 x^2 y^3 z
-130099200 x^2 z^4 
\\ & & \! -1199390720 x^4 z^2
{-}7988129600000000000 x^3 y z^2
{-}127552392000000000000000000000 x^2 y^2 z^2
{+} 23425600000000000000000000000 y^4 z^2 \\ & & +764952320 x^3 z^3
+3654393600000000000 x^2 y z^3 
+\underbar{141724880000000000000000000000000000000} y^3 z^3 
+3824761600000000000 x y^4 z
\\ & & +1618496000000000000 y^5 z
-3824761600000000000 x y z^4
+11712800000000000000000000000 y^2 z^4
+650496000000000000 y z^5-z^6 \\
(11)9 & {\rm d} & (340291 (yz-x^2)  ((x+2  z)  z-2  (y-2  z)^2)+(10x- 8y- 3 z) (12x- 27y- z)
 (12 x+28 y+z) (10 x+7 y+3 z)) (x^2-y z)+y^6 
\end{matrix}
\end{tiny}
$$

The correctness of the $64$ polynomials above
was verified using the tools to be described in Section 3.
In Section 4 we present an algorithm for sampling
from the connected components of
$\PP^{27}_\RR \backslash \Delta$.
The underlined coefficient is closest to the bound $1.5 \times 10^{38}$
in Proposition \ref{prop:bigintegers}.

To construct our list of $64$ representatives, we relied
on the following three methodologies. First, we employed
geometric constructions that are described in the
literature on topology of real algebraic curves.
Such constructions are found in many of the articles,
from Harnack and Hilbert to Gudkov and Viro. 
For larger numbers of ovals, these constructions led
to polynomials whose integer coefficients were too big.
In those cases, we needed to improve the coefficients.
This was done using techniques described in Section 4. A third approach
is to start with especially nice, but possibly singular, 
ternary sextics seen in the literature.

The last method is exemplified by the  {\em Robinson curve}.
This is the symmetric sextic
\begin{equation}
\label{eq:robinson}
 \mathcal{R}(a,b,c) \,=\,\,
 a (x^6+y^6+z^6) \,+\, b x^2 y^2 z^2 \,+ \,c (x^4 y^2 + x^4 z^2 + x^2 y^4 + x^2 z^4 + y^4 z^2 + y^2 z^4),  \quad
 \end{equation}
 where $(a:b:c) $ is any point in $\PP^2_\RR$.
For $(a:b:c) = (19:60:-20)$, this degree six curve is smooth 
and its real locus consists of ten non-nested ovals.
This is a sextic we like, and  we therefore included $\mathcal{R}(19,60,-20)$ 
in the list above as our
representative for the Type 10 nd.

\begin{figure}[h]
\begin{center}
  \includegraphics[height=6.7cm]{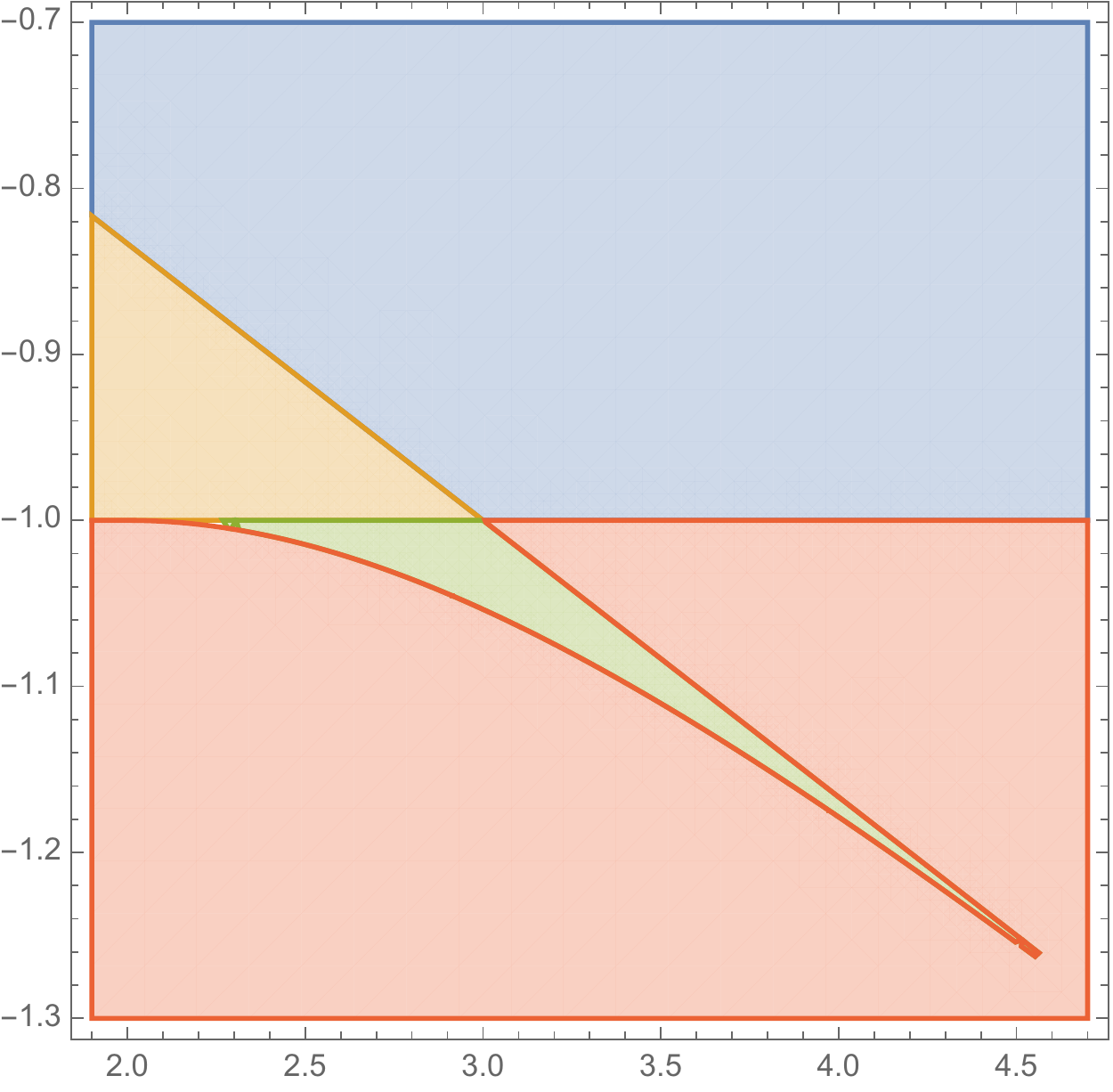}
  \end{center}
\vspace{-0.1in}
    \caption{\label{fig:zwei} 
The discriminant divides the Robinson net (\ref{eq:robinson}) into $15$ components that realize
 four topological types.
The green region represents smooth sextics with $10$ non-nested ovals.}
         \end{figure}

We named this net of sextics after R.M.~Robinson, who showed in 1973
that the special sextic $\mathcal{R}(1,3,-1)$ is non-negative but is not
 a sum of squares. 
The reason is geometric: its real locus consists of  $10 $ isolated 
singular points in $\PP^2_\RR$, given by the columns of the matrix
\begin{equation}
\label{eq:robinson10}
\begin{pmatrix}
               \,  1  &  -1 &    1  &   1  &  0  &   0  &  1  &  -1  &  1  &   1 \\
                \, 1  &   1 &   -1  &   1  &  1  &   1  &  0   &  0  &  1  &  -1 \\
               \,  1  &   1  &   1  &  -1  &  1  &  -1  &  1  &   1 &   0  &   0 
\end{pmatrix}.
\end{equation}
To understand how the topology of $\mathcal{R}(a,b,c)$
varies with $(a:b:c)$,
we examine the complement of the discriminant  in $\PP^2_\RR$.
This is the following reducible polynomial of degree~$75$:
\begin{equation}
\label{eq:robinsondisc}
a^3 (a+c)^6(3a-c)^{18}(3a+b+6c)^4 (3a+b-3c)^8 (9 a^3-3 a^2b+ab^2-3ac^2-bc^2+2c^3)^{12}
\end{equation}
The complement of the discriminant in $\PP^2_\RR$ has 15 connected components.
These realize the following topological types of smooth sextics: 10, 4, 3 and 0.
 These types are found in 1, 3, 5 and 6
connected components respectively. The most interesting part
of the partition is shown in Figure \ref{fig:zwei}. The green region is the component corresponding to curves with 10 ovals. Smooth curves in the orange, red and blue region have 0, 3 and 4 ovals
respectively.

The Robinson net serves as 
a cartoon for the geometric object studied in this paper:
the partition of a $27$-dimensional real projective space  into $64$ connected
components by an irreducible hypersurface of degree $75$.
Figure \ref{fig:zwei} is a two-dimensional slice of that partition.

\section{Classifier and  Empirical Distributions}

Deriving the topology of a real plane curve from its equation
is a well-studied problem in computational geometry. The main idea is to
ascertain the topology of plane curves by constructing isotopic graphs whose nodes
are the critical points (singular and extreme).
Authors  who  studied  this  problem include Jinsan Cheng, Sylvain Lazard, Luis Pe\~{n}aranda, Marc Pouget, Fabrice Rouillier, Elias Tsigaridas,
Laureano Gonz\'alez-Vega, Ioana Necula, M'hammed El Kahoui, Hoon Hong, Raimund Seidel, and Nicola Wolpert. See \cite{CL, EKW, GN, SW}.

The algorithm in \cite{CL} is called {\tt ISOTOP}.
It is implemented in {\tt Maple} using certain packages written in {\tt C}. We
refer to \cite[Table 1, page 28]{CL} for comparisons between {\tt ISOTOP}
and {\tt Top}, as well as with {\tt INSULATE,} {\tt AlciX} \cite{EKW}, and {\tt Cad2d} \cite{CWB}.
The latter two are implemented in {\tt C}++.

After first experiments with these packages, we came to the conclusion
that it is preferable for us to have our own specialized implementation for
curves of degree six in $\PP^2_\RR$. In particular, all of the
mentioned packages compute the topology of the curve in the affine
chart $\{z=1\}$. It was not obvious how to extract the topological
type of the curve in the projective plane from the output with a reasonable amount of coding effort.
Developers and users of the packages above may wish to experiment 
with our representatives for $10$ and $11$ ovals.

We wrote a program in {\tt Mathematica} called {\tt
  SexticClassifier}. It relies heavily on the built-in quantifier
elimination techniques of {\tt Mathematica}. The code can be obtained
from our supplementary materials website (\ref{eq:url}).  The input to
{\tt SexticClassifier} is a ternary sextic with integer coefficients,
$f \in \ZZ[x,y,z]_6$. The code checks whether $f$ defines a
non-singular curve in $\PP^2_\CC$.  If not, then the output is
``singular''. Otherwise, our program identifies which of the $56$
topological types the real curve $V_\RR(f)$ belongs to.
We next explain how it works.

First we compute a \textit{Cylindrical Algebraic Decomposition} (CAD; see
e.g.~\cite{BCR, CWB}) of the curve $V_\RR(f)$ in the affine chart
$z=1$. From this we build a graph whose nodes are the critical points of the
projection along the $y$-axis. Two nodes are connected
by an edge if an arc of the curve connects the corresponding points.
We also keep track of the
relative positions of these arcs. In order to get the correct topology in $\PP^2_\RR$,
we add further edges corresponding to
arcs crossing the line at infinity. We end up with a graph whose
connected components are in one-to-one correspondence with the connected
components of the curve. Finally, for each pair of connected
components of the graph, we have to check whether one of the
corresponding ovals lies in the inside of the other. This is done by first
deciding whether the center of projection lies inside the ovals or
not. Knowing this, detecting a nesting of two ovals only amounts to
looking at the parity of the number of branches of one oval lying
above the other oval.
The program terminates correctly in less than four seconds for all
sextics in Section 2. If the coefficients are between $-10^8$ and $10^8$,
then it takes less than one second.
\begin{example} 
\label{ex:emptyavoidancelocus} \rm
Here is an instance that will be of interest in Section 5. Let $f$ be the  sextic
$$ 
7(x+y+2z)(x+2y+z)(2x+y+z)(x-2y+3z)(y-2z+3x)(z-2x+3y)+ xyz(x^3+y^3+z^3).
$$
After checking that the complex curve $V_\CC(f)$ is smooth, our code reveals
that the real curve $V_\RR(f)$  consists of three separate non-nested ovals.
Thus, the input to  {\tt SexticClassifier} is the polynomial $f$
and the output is the label 3. In particular, $V_\RR(f)$ is non-dividing.

The sextic $f$ has the property that every real line in $\PP^2$
meets $V_\CC(f)$ in at least one real point. Thus, $V_\RR(f)$ is not
compact in any affine chart of $\PP^2_\RR$, regardless of which line
serves as the line at infinity. While such non-compact curves may cause difficulties in some of
 the approaches discussed above, {\tt SexticClassifier} has been designed
to handle them well.
\hfill $\diamondsuit$
\end{example}

Forty-eight of the fifty-six topological types determine whether the
curve is dividing or not. However, the remaining eight were found by
Nikulin \cite{Nik} to split into two rigid isotopy types. Suppose that
the output of {\tt SexticClassifier} is one of the eight labels
in~(\ref{eq:8areboth}).  At present, we have no easy tool for deciding
whether the given $f$ is dividing or non-dividing. That decision
requires us to build a model of the Riemann surface $V_\CC(f)$ in
order to ascertain whether $V_\CC(f) \backslash V_\RR(f)$ has one or
two connected components.  A method for making that decision was
developed and implemented by Kalla and Klein \cite{kalla}, but
presently their code is not suitable for curves of genus $10$. One of
our goals for the future is to extend {\tt SexticClassifier} so that
it decides rapidly between `d' and `nd' when the output is in
(\ref{eq:8areboth}).

As a first application of {\tt SexticClassifier}, we 
computed the empirical distributions of the topological types
over the space of ternary sextics.
In other words, we ask: what is the probability that a particular
topological type arises when we pick a sextic curve at random?
Of course, the answer will depend
on the choice of a probability distribution on the sextics
\begin{equation}
\label{eq:sextic}
 f \quad = \quad \sum_{i+j+k=6} c_{ijk} x^i y^j z^k .
\end{equation}
A theoretical study for curves of large degree was carried out recently by
Lerario and Lundberg \cite{LeLu}, who employed the
real Fubini-Study ensemble and the Kostlan distribution.
Our experiments below are meant to inform this line of inquiry
with some empirical numbers.

The first distribution we consider is ${\rm U}(3)$-invariant.
 The $28$ coefficients $c_{ijk}$ 
 are~chosen independently from a univariate normal distribution, centered at $0$,
with variance equal to the multinomial coefficient $6!/(i! j! k!)$.
According to   \cite[\S 16.1]{BC}, this is  the unique
${\rm U}(3)$-invariant probability measure on $\RR[x,y,z]_6$.
We selected $1,500,000$  samples, we ran {\tt SexticClassifer} on these sextics, 
 and we tallied the topological types.
 The result is shown in Table~\ref{tab:U3invariant}.
 
\begin{table}[b]
 \setcounter{MaxMatrixCols}{20}
 $$\boxed{ \begin{small} \setlength{\arraycolsep}{4pt}\begin{matrix}
 1 & 2 & 3 & (11) & 4 & (11)1 & (21) & 5 & \emptyset & \! (11)2 & \! (21)1 & 6 & (31) & \! ({\rm hyp}) \\
 875109 & 423099 & 97834 & 90316 & 7594 & 4360 & 1180 & 245 & 127 & 118 & 8 & 7 & 2 & 1 
\end{matrix}
  \end{small}}
 $$
 \vspace{-0.22in}
 \caption{\label{tab:U3invariant}
 Counts of topological types sampled from the $U(3)$-invariant distribution}
\end{table}
 
 We see that the empirical distribution is very skewed. Only $14$ of the
 $56$ types were observed at all. Only six types had an empirical
 probability of $ \geq 1\%$. No curve with more than six
 ovals was observed. 
 In our sample, the average number of connected components is
 approximately $1.50$. Another numerical invariant of the topological
 type of a  smooth real plane curve introduced in \cite[page 8]{LeLu} is the \textit{energy}. This nonnegative integer measures the nesting of the ovals. For sextics, the maximal energy is $38$ and attained by 
 the Harnack-type curve $(91)1$. The average energy of 
 the sextics in the sample above is approximately $2.99$.

We experimented with several other distributions on $\RR[x,y,z]_6$, each time drawing 
500,000 samples and running {\tt SexticClassifier}. In the
following tables, we report percentages, and we only list topological types
with empirical probability at least 0.01\%.

 \begin{table}[!h]
 \setcounter{MaxMatrixCols}{20}
 $$ \boxed{\begin{small} \begin{matrix}
 1       & 2 & 3 & (11) & \emptyset & 4\\
 77.52\% &  18.19\% & 2.11\% & 1.46\% & 0.66\% & 0.06\%
  \end{matrix}
  \end{small}}
 $$
 \caption{\label{tab:uniform}
 Sextics with coefficients in $\{-10^{12},\ldots,10^{12}\}$ uniformly distributed}
 \end{table}

 \begin{table}[!h]
 \setcounter{MaxMatrixCols}{20}
 $$ \boxed{\begin{small} \begin{matrix}
 1       & \emptyset & 3       & (11)   & 4      & 6      & (31)   & (11)3  & 7 & ({\rm hyp}) \\
 45.69\% & 28.38\%   & 16.15\% & 7.40\% & 2.17\% & 0.13\% & 0.03\% & 0.03\% &	0.01\% &  0.01\%
  \end{matrix}
  \end{small}}
 $$
 \caption{\label{tab:symmetric}
 Symmetric sextics with coefficients in $\{-10^{12},\ldots,10^{12}\}$ uniformly distributed}
  \end{table}

   \begin{table}[!h]
 \setcounter{MaxMatrixCols}{21}
 $$ \boxed{\begin{small} \begin{matrix}
    2 & 3 & 1&4&\!(11)&\!(11)1&5&(21)&\!\! (11)2&6 & \! (11)3&\!(21)1&(31)&7\\
\!    29.12\% \! & \! \! \! 25.77\%  \!\! & \!\!\! 16.44\% \!\! & \!\! \!11.06\% \! 
\! & \!\!\! 8.02\% \!\! &\! \! \! 4.30\%\! \!  & \!\! \!2.46\% \!\! &\! \! \!1.19\% \!\! &\! \!\!0.98\%\!\! &
\!\! \! 0.30\% \! \!    &\! \!\!  0.13\% \!\!&\! \!\! 0.12\% \!\! &\! \!  \! 0.07\% \!\! & \!\! \! 0.02\% \!\!
  \end{matrix}
  \end{small}}
 $$
 \caption{\label{tab:detrep}
 Sextics that are determinants of random symmetric matrices with linear entries}
 \end{table}
 
 \begin{table}[!h]
 \setcounter{MaxMatrixCols}{20}
 $$ \boxed{\begin{small} \begin{matrix}
 n&1   & \emptyset  & 2 & (11) & 3 \\
 10&90.17\% & 5.14\% & 4.50\% & 0.09\% & 0.09\%  
 \\11&89.95\% & 4.75\% & 5.06\% & 0.12\% & 0.12\%   
 \\12& 89.90\% & 4.28\% & 5.53\% & 0.15\% & 0.15\%    
  \end{matrix}
  \end{small}}
 $$
 \caption{\label{tab:rank10}
Sextics that are signed sums of $n$ sixth powers of linear forms}
 \end{table}

If we naively sample our sextics having uniformly distributed integer coefficients, then
the empirical distribution is even more skewed (Table \ref{tab:uniform}). In Table \ref{tab:symmetric} we see the empirical distribution obtained from sampling symmetric sextic polynomials. 
We do this by taking linear combinations of the monomial symmetric polynomials with coefficients being uniformly distributed integers between $-10^{12}$ and $10^{12}$. We see that the distribution now looks rather different from the two distributions considered before. For example, Type 7 
appears with probability $0.01\%$ whereas it did not appear among the $1,500,000$ samples from the $U(3)$-invariant distribution. The largest variety of types is observed when we sample sextics of the form $\det(xA+yB+zC)$ where $A,B$ and $C$ 
are symmetric $6 \times 6$-matrices whose entries are
   uniformly distributed random integers between $-1000$ and $1000$ (Table \ref{tab:detrep}). This is also the only distribution we considered where the type with only one oval is not the most common type. Several types appear that did not show up among the $1,500,000$ 
    samples in Table  \ref{tab:U3invariant}.
    
   Our most skewed distribution was from
   sampling signed sums (with the signs chosen uniformly at random) of ten sixth powers of linear forms whose coefficients are uniformly distributed integers between $-1000$ and $1000$.
   Thus, here we are restricting to sextics of real rank $10$,
   the case considered in \cite[\S 6]{MMSV}.
   Table \ref{tab:rank10} reveals that more than $90\%$ of the samples
   have one oval. After passing to eleven summands, we observe more curves of Type $2$ than empty curves. Going to sums of twelve sixth powers of linear forms
   increases this effect.

Our experiments demonstrate that it is extremely rare to observe
many ovals when sextic curves are generated at random. 
We never encountered a sextic with  8, 9, 10 or 11  ovals.
Only few types occurred in our samples.
This underscores the importance of having the
explicit polynomials in $\ZZ[x,y,z]_6$ that are listed above,
to serve as seeds for local sampling.

\section{Constructing Representatives}\label{sec:construct}

In this section we discuss the construction of representative sextics,
such as those listed in Section~2. We also present a method for
sampling from any of the $64$ rigid isotopy classes.

Small coefficient size is  a natural criterion for desirable representatives.
We say that a sextic $f$ as in (\ref{eq:sextic}) is {\em optimal}  if its coefficients $c_{ijk}$
are integers, its complex curve $V_\CC(f)$ is smooth,
and the  largest absolute value $|c_{ijk}|$
is minimal among all such
 sextics in the same rigid isotopy class. For instance, the
Fermat sextics $x^6+y^6 \pm z^6$  are optimal. One approach to finding optimal sextics
 is to sample at random from sextics with $|c_{ijk}| \in \{0,1,\ldots,m\}$ with $m$ very small.
 One might also do a brute force search that 
 progressively increases the sum of the absolute values $|c_{ijk}|$.
Such strategies work for some of the types seen with highest frequency in
Table \ref{tab:U3invariant}. For instance, sampling with $m=1$
yields these four optimal sextics:
$$ 
\begin{small}
\begin{matrix}
2 & {\rm nd} &
x^6-x^5 y-x^5 z-x^4 y z+x^3 y^3+x^3 y z^2-x^2 y^4  -x^2 y^2 z^2+x y^4 z \\ & &
-x y^3 z^2-x y^2 z^3-x y z^4+y^6+y^5 z+y^4 z^2+y^3 z^3-y z^5+z^6 \\
3 & {\rm nd} &
x^6-x^5 y-x^4 y^2+x^4 z^2+x^3 y^3+x^3 y^2 z+x^3 y z^2+x^2 y^3 z-x^2 y^2 z^2
+x^2 y z^3+x^2 z^4 \\ & & \quad  +x y^4 z+x y^3 z^2+x y^2 z^3+x y z^4-x z^5+y^6+y^5 z
+y^4 z^2-y^2 z^4-y z^5+z^6 \\
(11) & {\rm nd}  &
x^5 y+x^5 z+x^4 y^2+x^4 y z-x^3 y^3+x^3 y z^2-x^3 z^3-x^2 y^4+x^2 y^3 z
+x^2 y^2 z^2-x^2 y z^3 \\ & & \quad +x^2 z^4-x y^4 z+x y^3 z^2-x y z^4+x z^5-y^5 z
-y^4 z^2+y^3 z^3+y^2 z^4-y z^5-z^6 \\4 & {\rm nd} & -x^6 + x^5 y + x^4 y^2 - x^3 y^3 + x^2 y^4 + x y^5 - y^6 + x^5 z +  x^4 y z + x y^4 z \\ & & \quad + y^5 z + x^4 z^2 + y^4 z^2 - x^3 z^3 - y^3 z^3 +  x^2 z^4 + x y z^4 + y^2 z^4 + x z^5 + y z^5 - z^6
\end{matrix}
\end{small}
$$
However, this approach is not useful for constructing the vast majority of types,
since these are extremely rare when we sample at random, and they will never appear
for small $m$.

For all systematic constructions, it is imperative to work with the
{\em discriminant} $\Delta$.
Each connected component of the complement $\RR[x,y,z]_6 \backslash \Delta$
corresponds to one of our $64$ types.
We identify $\Delta$ with its defining irreducible polynomial  
over $\mathbb{Z}$ in the $28$ unknowns $c_{ijk}$.

We evaluate $\Delta$ using
{\em Sylvester's formula}, as stated by Gelfand, Kapranov and Zelevinsky
 \cite[Theorem 4.10, Chapter 3]{GKZ}. This expresses
 $\Delta$  as the determinant of a $45 \times 45$-matrix $\mathcal{S}_f$.
 Each entry in the first $30$ columns of $\mathcal{S}_f$ is
 either $0$ or one of the coefficients $c_{ijk}$.
 The entries in the last $15$ columns are cubics
 in the $c_{ijk}$. So, the degree of ${\rm det}(\mathcal{S}_f)$ is $75$,
 as required. The \emph{Sylvester matrix} $\mathcal{S}_f$ is the representation 
 in monomial bases of an $\RR$-linear map
 $$ \mathcal{S}_f \,:\,(\,\RR[x,y,z]_3\,)^3 \,\oplus \,\RR[x,y,z]_4
\,\,\longrightarrow\,\, \RR[x,y,z]_8 $$
that is defined as follows. 
On the first summand, it maps a triple of cubics to an octic via
$$
\mathcal{S}_f \,:\,(a,b,c)\, \mapsto \,
a \frac{\partial{f}}{\partial{x}} + 
b \frac{\partial{f}}{\partial{y}} +
c \frac{\partial{f}}{\partial{z}} .
$$
On the second summand, 
the map $\mathcal{S}_f$ takes a quartic monomial 
$ x^r y^s z^t$ to the octic ${\rm det}(M_{rst})$,
where $M_{rst}$ is any $3 \times 3$-matrix
of ternary forms that satisfies the homogeneous identity
$$ \begin{pmatrix} 
\partial{f}/\partial{x} \\
\partial{f}/\partial{y} \\
\partial{f}/\partial{z} 
\end{pmatrix}\,\, = \,\, M_{rst} \cdot
\begin{pmatrix}
x^{r+1} \\
y^{s+1} \\
z^{t+1} \\
\end{pmatrix}.
$$
The entries of $M_{rst}$ are linear in the
$c_{ijk}$, so ${\rm det}(M_{rst})$ is
an octic in $x,y,z$ whose coefficients are
cubics in the $c_{ijk}$.
These  are the entries in the column
of $\mathcal{S}_f$ that is indexed by $x^r y^s z^t$.

\begin{proposition} \label{prop:sylvester}
The discriminant $\Delta$ equals the determinant of the $45 \times
45$-matrix $\mathcal{S}_f$.
\end{proposition}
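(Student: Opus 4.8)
The plan is to verify the identity $\det(\mathcal{S}_f)=\Delta$ by the standard three-step recipe for determinantal resultant formulas: show that $\det(\mathcal{S}_f)$ vanishes exactly on singular curves, that it has degree $75$, and that its leading constant is $1$. The conceptual starting point is Euler's relation $6f = x f_x + y f_y + z f_z$, which shows that a plane sextic is singular precisely when the three partials $f_x,f_y,f_z \in \RR[x,y,z]_5$ acquire a common zero in $\PP^2_\CC$. Thus $\Delta$ is, up to a scalar, the resultant of these three quintics, and the map $\mathcal{S}_f$ is built to realize that resultant as a determinant. The ambient degree $8$ and the matrix size $45\times 45$ are forced by $\dim\RR[x,y,z]_3=10$, $\dim\RR[x,y,z]_4=15$, and $\dim\RR[x,y,z]_8=45$.

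First I would prove divisibility. Suppose $V_\CC(f)$ has a singular point $p=(\alpha:\beta:\gamma)$, so $f_x(p)=f_y(p)=f_z(p)=0$. I claim the evaluation functional $h\mapsto h(p)$ on $\RR[x,y,z]_8\otimes\CC$ annihilates the whole image of $\mathcal{S}_f$. On the first summand this is immediate, since $(a f_x+b f_y+c f_z)(p)=0$. On the second summand, the defining identity $M_{rst}\cdot(x^{r+1},y^{s+1},z^{t+1})^{T}=(f_x,f_y,f_z)^{T}$ shows that the vector $(\alpha^{r+1},\beta^{s+1},\gamma^{t+1})^{T}$ lies in the kernel of $M_{rst}(p)$; as $p\neq 0$ this vector is nonzero, so $\det(M_{rst})(p)=0$. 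Since evaluation at $p$ is a nonzero functional on octics, $\mathcal{S}_f$ is not surjective and $\det(\mathcal{S}_f)=0$. As $\Delta$ is irreducible and defines the locus of singular sextics, this gives $\Delta \mid \det(\mathcal{S}_f)$.

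Next I would count degrees. Each of the first $30$ columns of $\mathcal{S}_f$ has entries that are $0$ or a single coefficient $c_{ijk}$, hence linear, while each of the last $15$ columns has entries cubic in the $c_{ijk}$. Therefore $\det(\mathcal{S}_f)$ is homogeneous of degree $30+3\cdot 15=75=\deg\Delta$. Provided $\det(\mathcal{S}_f)\not\equiv 0$, the divisibility together with the matching degrees forces $\det(\mathcal{S}_f)=\lambda\,\Delta$ for a nonzero scalar $\lambda$. To pin down $\lambda$ and simultaneously confirm non-vanishing, I would specialize to the Fermat sextic $x^6+y^6+z^6$, whose partials $6x^5,6y^5,6z^5$ have no common zero; here $\mathcal{S}_f$ is essentially monomial, so $\det(\mathcal{S}_f)$ can be evaluated directly and compared with the known value of $\Delta$ at the Fermat point to obtain $\lambda=1$.

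The hard part is precisely this last normalization: establishing equality on the nose, rather than merely up to a scalar, is the actual content of the Sylvester formula and is not captured by the elementary divisibility-and-degree argument. A self-contained proof requires the machinery of the determinant of the Cayley--Koszul complex together with the Morley form that supplies the extra $15$ octics $\det(M_{rst})$, as developed in \cite[Chapter 3]{GKZ}. Accordingly, the efficient route is to invoke \cite[Theorem 4.10, Chapter 3]{GKZ} directly: the construction of $\mathcal{S}_f$ above is exactly their Sylvester matrix for $\mathrm{Res}(f_x,f_y,f_z)$, and what remains is only the bookkeeping verifying that the relevant degree is $8$ and that the resulting matrix is the asserted $45\times 45$ matrix computing $\Delta$.
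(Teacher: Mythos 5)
Your proposal is correct, and its closing step---invoking \cite[Theorem 4.10, Chapter 3]{GKZ} for the resultant of three ternary quintics applied to $f_x,f_y,f_z$---is exactly the paper's proof. But the scaffolding you build before that takes a genuinely different, more self-contained route, and the comparison is instructive. The paper cites GKZ at the outset to identify $\det(\mathcal{S}_f)$ with $\mathrm{Res}(f_x,f_y,f_z)$, and only then compares degrees; vanishing on the singular locus, generic non-vanishing, and independence of the choice of the $M_{rst}$ are all outsourced to GKZ. You instead prove $\Delta\mid\det(\mathcal{S}_f)$ by hand: at a singular point $p=(\alpha:\beta:\gamma)$ of $V_\CC(f)$, evaluation at $p$ kills $af_x+bf_y+cf_z$ trivially, and kills $\det(M_{rst})$ because $(\alpha^{r+1},\beta^{s+1},\gamma^{t+1})$ is a nonzero kernel vector of $M_{rst}(p)$; so the image of $\mathcal{S}_f$ lies in a hyperplane of the space of octics and the determinant vanishes. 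Combined with the degree count $30\cdot 1+15\cdot 3=75$ and the Fermat specialization---where, with the diagonal choice $M_{rst}=\mathrm{diag}(6x^{4-r},6y^{4-s},6z^{4-t})$, the matrix is monomial and visibly invertible, since the first $30$ columns hit precisely the octic monomials having an exponent $\geq 5$ and the last $15$ hit those with all exponents $\leq 4$---this yields $\det(\mathcal{S}_f)=\lambda\Delta$ with $\lambda\neq 0$, using no machinery and valid for any admissible choice of the $M_{rst}$. What this elementary argument cannot reach is only the constant $\lambda$.

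That is also the one place where you overestimate the difficulty: pinning down $\lambda=1$ is not ``the actual content'' of the proposition. The paper defines $\Delta$ only as the defining irreducible polynomial over $\ZZ$, i.e.\ up to a constant, and the paper's own proof likewise establishes nothing beyond proportionality (two nonzero forms of degree $75$ cutting out the same irreducible hypersurface agree only up to a scalar). Every use made of the proposition---locating the real roots of $\Delta(f+tg)$ in the local exploration method---depends only on the zero locus. Indeed, under standard normalizations the resultant of the partial derivatives differs from the discriminant by a power of $6$, so $\lambda=1$ is a normalization convention rather than a theorem to be chased; your proposed comparison of values at the Fermat point would in any case be circular, since it presupposes knowing the value of $\Delta$ there. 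In short, your divisibility-and-degree argument already proves the proposition in the sense in which it is stated and used, and your final appeal to GKZ (which coincides with the paper's proof) can be retained for the exact identification with the resultant or dropped altogether.
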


\begin{proof}
We use Sylvester's formula for
the resultant of three ternary quintics. This is 
 \cite[Theorem III.4.10]{GKZ}
for $d=5$ and $k=4$. If we take the three quintics
to be the three partial derivatives of $f$, then we get
the matrix $\mathcal{S}_f$ above. That resultant
equals our discriminant because both are non-zero 
homogeneous polynomials
of the same degree $75$ in the $c_{ijk}$.
 \end{proof}

A pencil of sextics is a line  $\{f + t g\}$ in the space
$\PP^{27}$ of all sextics.
Its discriminant $\Delta(f + t g)$ is a univariate
polynomial in $t$ of degree $75$.
 We can compute that polynomial as the determinant of the
 Sylvester matrix $\mathcal{S}_{f + tg}$. For two
sextics $f$ and $g$ in $\ZZ[x,y,z]_6$ with reasonable coefficients, we obtain
 the discriminant $\Delta(f+tg)$ in a few seconds.
This method works, in principle, also for evaluating $\Delta$
on families with more than one parameter. For instance,
we get the output (\ref{eq:robinsondisc})
from the input (\ref{eq:robinson}) in under one second.
However, that output factors and is small.
In our experience, the symbolic evaluation of the $45 \times 45$ determinant
 in Proposition \ref{prop:sylvester}
works well for pencils of sextics, but generally fails for nets of sextics.

Sylvester's formula allows us to sample from a fixed rigid isotopy
class.  Namely, we start with a representative $f$ with
$\Delta(f) \not= 0 $, like one of the $64$ sextics  in
Section~2.  We then pick a random sextic $g$ and we compute the
univariate polynomial $\Delta(f + t g)$.  This has $75$ complex
roots. We extract the real roots, and we identify the largest negative
root and the smallest positive root. For any $t$ in the open interval
between these two roots, the sextic $f + tg$ has the same rigid
isotopy type as $f$.  Repeating this many times, we thus sample from
the connected component of $\RR[x,y,z]_6 \backslash \Delta$ that
contains $f$. This gives us access to all sextics in
the largest star domain with center $f$ contained in that component.
We call this process the {\em local exploration method}. It will be
used for the applications in Sections 5 and 6.

For exploring the $64$ connected components 
of $\PP^{27}_\RR \backslash \Delta$, it is
important to understand their adjacencies.
A general point in the discriminant $ \Delta$ is a sextic curve $f$ that has precisely one
ordinary node. If $f$ is in the real locus
$\Delta_\RR$, then that node is a point in the real plane $\PP^2_\RR$.
Two of the $64$ types are connected by a \textit{discriminantal transition} if there is a curve in the closure of both of the components having only one singular point which is an ordinary node.

 There are three different types of discriminantal transitions.
 If the singular curve has an isolated real point (acnode), defined locally by 
 $x^2+y^2=0$, then the transition corresponds to removing one of the empty ovals.
  We call this operation {\em shrinking of ovals}.
Itenberg \cite{itenberg} uses the term {\em contraction}.
  The inverse operation is adding an empty oval.
 
 \medskip
 The following lemma and the subsequent theorem are probably well-known to experts in the area. We include the proofs for lack of a suitable reference.
 
 \begin{lemma}
 Shrinking an oval always leads to a curve of non-dividing type.
\end{lemma}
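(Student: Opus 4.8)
The plan is to analyze the discriminantal transition that shrinks an empty oval, using the fact that just before and just after the oval disappears, the curve has a well-understood topological and complex-analytic structure. The key geometric input is that shrinking an oval corresponds to passing through a singular curve with an isolated real point (acnode), locally given by $x^2+y^2=0$. I would set up a one-parameter family $f_t$ of sextics such that for small $t>0$ the curve has the empty oval, at $t=0$ the oval has contracted to an acnode, and for $t<0$ the oval has disappeared entirely. The claim is that the curve after shrinking is non-dividing.

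First I would recall the criterion for dividing: a smooth real curve $C$ is dividing precisely when the real locus $C_\RR$ separates the complex Riemann surface $C_\CC$ into two connected halves, equivalently when $[C_\RR]$ is zero in $H_1(C_\CC;\ZZ/2)$ and the number of real ovals equals the maximal value compatible with the genus via Harnack-type considerations. The cleaner tool here is a parity/count argument: for a dividing curve of degree $6$ (genus $g=10$), the number of real connected components has a fixed parity, namely $C$ is dividing only if the number of ovals is congruent to $g+1 = 11 \pmod 2$, i.e.\ the real components number must have the same parity as $g+1$. I would invoke this classical fact (Klein's congruence: a dividing $M$-type behavior forces the number of real branches $\ell$ to satisfy $\ell \equiv g+1 \pmod 2$). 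Shrinking an oval changes the number of ovals by exactly one, hence flips this parity.

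The heart of the argument is then the following: whatever the dividing status of the curve before shrinking, the complex orientation and the count of real components both behave controllably across the acnode transition. When an oval shrinks to an isolated point and vanishes, the complex topology of the nearby smooth fibers $C_\CC$ is unchanged (the vanishing cycle for an acnode $x^2+y^2=0$ is a complex cycle, not a real one, so the complex Riemann surface retains genus $10$ on both sides), but the number of real ovals drops by one. By Klein's parity congruence, a curve of genus $10$ can be dividing only when its number of real components has parity $11 \equiv 1 \pmod 2$; losing one oval changes the parity, so at least one of the two neighboring smooth curves must be non-dividing. To pin down that it is specifically the curve \emph{after} shrinking (the one with fewer ovals) that is non-dividing, I would argue that a dividing curve must remain dividing under a transition that does not merge real branches, or more directly: the acnode arises from a pair of complex conjugate branches pinching at a real point, which is exactly the signature of a non-dividing degeneration, so the side on which the real oval has been absorbed cannot carry a dividing structure.

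The main obstacle will be making the parity argument rigorous at the level of the complex orientation rather than merely counting components: I need to confirm that the acnode transition does not alter the complex topology (genus) of the smooth fibers and that the vanishing cycle is genuinely a complex, non-real cycle, so that the change is entirely in the real locus. I expect to handle this by a local normal-form computation near the acnode, showing the Milnor fiber of $x^2+y^2=0$ contributes a complex vanishing sphere that is disjoint from the real locus for $t<0$, whence the real oval simply disappears without altering connectivity of $C_\CC \setminus C_\RR$ elsewhere. Combined with Klein's congruence, this forces the shrunk curve to be non-dividing.
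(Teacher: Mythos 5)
Your parity strategy is sound only on half of the cases, and the step you use to cover the other half is circular, so there is a genuine gap. Klein's congruence (dividing $\Rightarrow$ number of ovals $\equiv g+1 \equiv 1 \pmod 2$ for genus $10$) rules out ``dividing'' only for curves with an \emph{even} number of ovals. The shrunk curve has an even number of ovals exactly when the curve before shrinking had an odd number. So when the curve before shrinking is non-dividing with an \emph{even} number of ovals (say of type $8$ or type $10$), the shrunk curve has an odd number of ovals and the congruence is silent; this is precisely the case your proof must still handle. Your first fallback, ``a dividing curve must remain dividing under a transition that does not merge real branches,'' applied to the reverse (oval-adding) transition out of the allegedly dividing shrunk curve, is --- modulo the parity argument --- equivalent to the lemma itself: it amounts to asserting that no acnodal wall of the discriminant has a dividing chamber on its fewer-ovals side, which is exactly what is to be proven, and you offer no independent justification. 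Your second fallback, that conjugate branches pinching at a real point is ``exactly the signature of a non-dividing degeneration,'' is not a defined notion; it restates the conclusion. (Also, your opening criterion for dividing --- vanishing of $[C_\RR]$ in $H_1(C_\CC;\ZZ/2)$ \emph{and} maximality of the number of ovals --- is garbled: dividing curves need not be $M$-curves; fortunately you never use it.)

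The good news is that the local normal-form analysis you sketch at the end already contains a complete proof, with no parity argument and no case distinction at all. The smoothing $x^2+y^2=-\delta$ ($\delta>0$) contains the vanishing cycle $\{(iu,iv): u,v\in\RR,\; u^2+v^2=\delta\}$: a connected circle that is invariant under complex conjugation and disjoint from the real locus. On a dividing curve, conjugation interchanges the two components of $C_\CC \setminus C_\RR$, so every nonempty connected conjugation-invariant subset of $C_\CC$ must meet $C_\RR$. Since the shrunk curve carries such a subset avoiding $C_\RR$, it is non-dividing --- in every case. This is in essence the paper's proof, which is phrased at the level of the nodal curve $C$: since being non-dividing is an open condition, one may assume $C$ is dividing; the acnode $p$ is the crossing point of two complex-conjugate branches and hence lies in the closure of both components of $C_\CC \setminus C_\RR$, so $(C_\CC \setminus C_\RR)\cup\{p\}$ is connected, and this connectedness persists (as your local cylinder) in the smoothing that deletes the real point. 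Replacing your case analysis and Klein's congruence by this one observation turns your proposal into a correct proof.
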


\begin{proof}
Consider a real plane curve $C$ with only one singularity $p$ that is an acnode. Since being of non-dividing type is an open condition, we can assume that $C$ is of dividing type. Then $p$ is in the closure of both connected components of $C_\CC \backslash  C_\RR$. In particular, 
$(C_\CC \backslash C_\RR)\cup \{p\}$ is connected. Therefore, after shrinking 
an oval we get a curve of non-dividing type.
\end{proof}
 
The other type of ordinary node consists of two crossing real branches
(crunode), defined locally by $x^2-y^2 = 0$.  There are two
possibilities: The connected component containing the node is either two ovals intersecting in one point, or two
pseudolines intersecting in one point. The case of one
oval and one pseudoline cannot occur in even degree. In the following we describe the topology of small perturbations of the nodal curve in each of these two cases. In the former
case, the transition consists of two ovals coming together and forming
one oval. This happens in one connected component of the complement of
all other ovals.  We call this transition {\em fusing of ovals}.
Itenberg \cite{itenberg} uses the term {\em conjunction}. This
operation reduces the number of ovals by one.  When two
pseudolines intersect, every small perturbation of the nodal curve has the same number of ovals but the interior and exterior of one outermost oval are exchanged. We call
this operation {\em turning inside out}. An example of turning inside
out is shown in Figure \ref{fig:turning}.  For plane conics, turning
inside out is the only possibility.  For quartics, all three
transitions are possible.  We summarize our discussion as follows.

\begin{theorem}
For curves of even degree, every discriminantal transition
between rigid isotopy types is one of the following:
shrinking of ovals, fusing of ovals, and turning inside~out.
\end{theorem}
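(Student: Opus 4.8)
The plan is to reduce the statement to a local-to-global analysis of the single node guaranteed by the definition of a discriminantal transition, and then to pin down each case using the mod-two intersection form on $H_1(\PP^2_\RR;\ZZ/2)\cong\ZZ/2$. By definition, a transition is witnessed by a curve $C$ lying in the common boundary of both rigid isotopy components and having exactly one ordinary node $p$. Over $\RR$ the tangent cone of an ordinary node is a product of two distinct linear forms, so after a real coordinate change $p$ is locally either $x^2+y^2=0$ (an acnode, an isolated real point) or $x^2-y^2=0$ (a crunode, two crossing real branches). The acnode case is immediate: one smoothing makes the isolated point vanish and the other turns it into a small empty oval, which is exactly shrinking of ovals.

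First I would record the arithmetic input that drives the classification, namely that a smooth curve of even degree in $\PP^2_\RR$ has no pseudolines. This follows from the intersection form: an oval is null-homologous while a pseudoline represents the generator, whose self-intersection is $1$, so any two pseudolines meet in an odd number of points and cannot be disjoint; since the components of a smooth curve are disjoint there is at most one pseudoline, and as the number of pseudolines is congruent to the degree modulo $2$ it must be $0$. Hence both smoothings of $p$, being smooth even-degree curves, have all components null-homologous.

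The heart of the argument is the crunode case. Extending the two real branches through $p$ until they return to $p$, there are two global patterns. In the first, each branch closes up on itself, producing two smooth closed curves $B_1,B_2$ that meet transversally and only at $p$; then $[B_1]\!\cdot\![B_2]=1$, which forces both $B_i$ to be pseudolines, since an oval meets everything evenly. This is precisely the model $xy=0$ of two lines: each perturbation $xy=\pm\epsilon z^2$ is a single smooth oval, so the number of ovals is unchanged and only the inside and outside of that oval are interchanged — turning inside out. The mixed possibility of one oval and one pseudoline is thereby excluded, as it would give $[B_1]\!\cdot\![B_2]=0\neq1$. In the second pattern the two branches connect to one another, so the node-component is a single immersed circle whose two lobes $L_1,L_2$ are, by the no-pseudoline constraint applied to the splitting smoothing, two ovals; one perturbation separates them into two disjoint ovals and the other fuses them into the single oval of class $[L_1]+[L_2]=0$ — fusing of ovals.

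The main obstacle is justifying the component counts and that these two global patterns exhaust the crunode case. I would handle both at once with the standard combinatorial model of a node: the four local arcs lie in a fixed cyclic order, the two branches occupy opposite arcs, and the rest of the curve matches the four arcs in pairs. This global matching is either the branch-matching itself (first pattern) or one of the two complementary adjacent matchings (second pattern), and these are the only perfect matchings available. Tracing the two planar smoothings against the global matching then gives component counts $1$ and $1$ in the first pattern and $1$ and $2$ in the second, which, combined with the homology constraints above, identifies them unambiguously as turning inside out and fusing. Together with the acnode case this yields the three transitions in the statement and no others.
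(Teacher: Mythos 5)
Your proof is correct and takes essentially the same route as the paper's: the same acnode/crunode split, the same dichotomy in the crunode case (your "branch-matching vs.\ adjacent matching" patterns correspond exactly to the paper's distinction of whether the two preimages of the node lie on different or the same component of the normalization), and the same two key arguments --- the mod-$2$ intersection/parity argument forcing two circles crossing once to be pseudolines, and the no-pseudoline-in-even-degree constraint applied to the splitting smoothing forcing the figure-eight lobes to be ovals. The only differences are expository: you make explicit (via $H_1(\PP^2_\RR;\ZZ/2)$ and the matching combinatorics) the homology lemma and the component counts that the paper asserts without proof.
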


\begin{proof}
Let $C$ be a real plane curve of even degree with exactly one ordinary singularity $p$. If $p$ is an acnode, then $C$ corresponds to shrinking. Let $p$ be a crunode. There are two subsets $C_1, C_2 \subset C_\RR$,
 both homeomorphic to the circle, such that $C_1\cap C_2=\{p\}$.
Let $\pi: \tilde{C}\to C$ be the normalization map. The
fiber $\pi^{-1}(p)$ consists of exactly two points $p_1,p_2\in\tilde{C}_\RR$.

Suppose that $p_1$ and $p_2$ belong to the same connected component of $\tilde{C}_\RR$.
If $C_1$ or $C_2$ does not disconnect $\PP^2_\RR$, there would be a small deformation of $C$ to a smooth curve having (at least) one pseudoline as one of the connected components of its real part. Since this is not possible, both $C_1$ and $C_2$ disconnect $\PP^2_\RR$. 
This  case corresponds to fusing of ovals.

Next suppose that $p_1$ and $p_2$ belong to different connected components of $\tilde{C}_\RR$. For both bifurcations of the node, the number of connected components of the real part of the curve stays the same. If $C_1$ or $C_2$ disconnected $\PP^2_\RR$, then there would be another intersection point of $C_1$ and $C_2$ besides $p$. 
Thus, $\PP^2_\RR \backslash C_i$ is connected for $i=1,2$,
and $\PP^2_\RR  \backslash (C_1\cup C_2)$ has two connected components, both homeomorphic to an open disc. Depending on the bifurcation of the node, one of these connected components is still homeomorphic to an open disc after deformation and the other one 
is not. This corresponds to turning inside out.
\end{proof}

\begin{figure}[h]
\begin{center}
\vspace{-0.2in}
  \includegraphics[width=7.5cm]{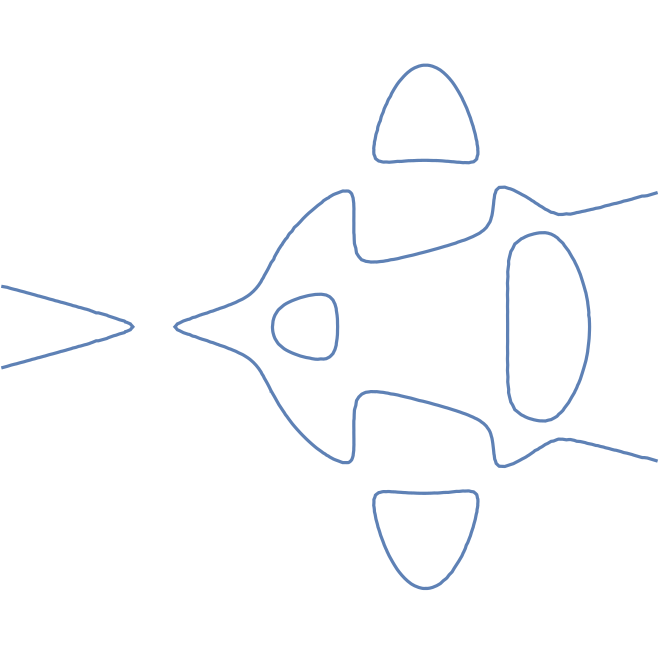}\hspace*{2.3em}
  \includegraphics[width=7.5cm]{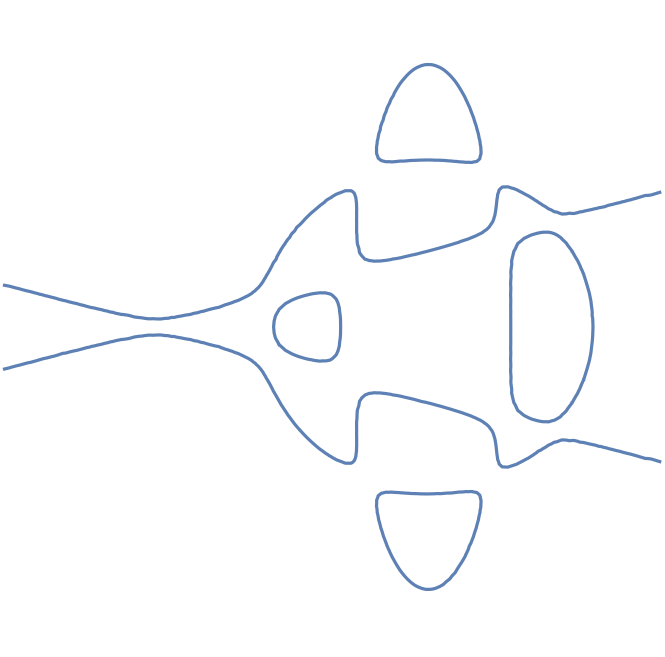}
\vspace{-0.3in}
\end{center}
    \caption{\label{fig:turning} 
Type (21)2d transitions into Type (21)2nd by turning an oval inside out.}
         \end{figure}

It is instructive to examine the diagram in Figure \ref{fig:eins} from the
perspective of discriminantal transitions. The edges in the poset
correspond to shrinking or fusing. There are three possibilities for
what might be geometrically possible: shrinking only, fusing only, or
shrinking and fusing. For instance, Type $(11)$ can become 
Type $1$ by either shrinking the inner oval, or by fusing the two
nested ovals. Both possibilities are geometrically realized by a
singular curve with a single node that lies in the common boundary
between the two types.

\begin{theorem}[Itenberg]
\label{thm:itenberg}
Each of the edges in Figure \ref{fig:eins} is realized by shrinking an empty oval, except the one between (hyp) and (11). Not every edge is realized by fusing two ovals.
\end{theorem}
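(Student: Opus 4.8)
The plan is to use the preceding classification of discriminantal transitions: since our curves have even degree, every edge of the poset in Figure~\ref{fig:eins} reduces the number of ovals by one, and is therefore realized either by \emph{shrinking} an oval (an acnode transition) or by \emph{fusing} two ovals (a crunode transition), turning inside out not changing the topological type. The theorem thus splits into a positive statement (all edges but one are shrinkings) and two negative ones (the edge $(\mathrm{hyp})$--$(11)$ is not a shrinking, and some edge is not a fusing). I would organize the argument into these three pieces.

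For the positive direction I would argue that a cover relation, with upper type $U$ and lower type $L$, is realized by shrinking precisely when $U$ possesses an \emph{empty} oval whose contraction yields the scheme $L$: contracting that oval to an acnode produces a one-nodal curve whose two real smoothings are exactly $U$ (re-inflating the oval) and $L$ (deleting it), so the common wall is an acnode and the transition is a shrinking. I would then verify, by direct inspection of the $56$ real schemes in Figure~\ref{fig:eins}, that every cover relation except $(\mathrm{hyp})$--$(11)$ exhibits such an empty oval with the correct resulting nesting; this is a finite combinatorial check on how the schemes sit in the poset. The preceding Lemma supplies the consistency test in the dividing direction: since shrinking always outputs a non-dividing curve, every lower type $L$ obtained this way is non-dividing, which matches the color code in Figure~\ref{fig:eins} for all of these edges.

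The crux, and the main obstacle, is to prove that the single edge $(\mathrm{hyp})$--$(11)$ is \emph{not} realized by shrinking. The naive contraction of the innermost of three nested ovals would produce $(11)$, so the obstruction is subtle: it must come from the fact that $(\mathrm{hyp})$ is one of the six necessarily dividing types in~(\ref{eq:6aredividing}). The plan is to track Rokhlin's complex orientation data through the degeneration. In a dividing curve the three nested ovals are the separating circles that cut the genus~$10$ Riemann surface into its two conjugate halves, and their complex orientations are rigidly coupled; I would show that contracting the inner oval to an acnode inside the dividing chamber forces the limiting one-nodal curve to be non-generic, so that no acnode wall separates the $(\mathrm{hyp})$ chamber from the $(11)$ chamber. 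Making this orientation bookkeeping precise---and thereby proving that the genuine wall is a crunode, i.e.\ that the two innermost ovals must \emph{fuse} through the point where the normalization reconnects them---is the technical heart of the statement and the step I expect to be hardest; it is exactly the refinement for which Itenberg's methods are needed.

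Finally, for the assertion that not every edge is realized by fusing, it suffices to exhibit one shrinking-only edge. The cover relation from Type~$1$ (a single oval) to the empty curve~$0$ does the job: contracting the unique oval to an acnode realizes it by shrinking, whereas fusing requires \emph{two} ovals to come together and is therefore impossible when the larger scheme has only one oval. Hence this edge is a shrinking but never a fusing, which---together with the exceptional edge $(\mathrm{hyp})$--$(11)$ treated above---completes the proof.
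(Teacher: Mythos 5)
The paper does not actually reprove this statement: it is Itenberg's theorem, and the paper's ``proof'' is a citation --- the first assertion is \cite[Prop.~2.1]{itenberg}, and the second follows from Itenberg's result that the transition from $(11)9$ to $10$ cannot be realized by fusing. Your proposal tries to prove the theorem from scratch, and its central step fails. The positive direction rests on your claim that an edge between an upper type $U$ and a lower type $L$ is realized by shrinking \emph{precisely when} $U$ has an empty oval whose combinatorial deletion yields the scheme $L$, so that everything reduces to ``a finite combinatorial check'' on Figure~\ref{fig:eins}. That equivalence is false, and the exceptional edge itself refutes it: $(\mathrm{hyp})$ has an empty innermost oval whose deletion gives $(11)$, yet the theorem asserts this edge is \emph{not} realized by shrinking. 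You notice this tension (``the naive contraction \ldots would produce $(11)$''), but it is fatal to your criterion rather than a subtlety to be absorbed later. The genuine content of the positive statement is an existence assertion in $\PP^{27}_\RR$: for each edge one must produce an acnodal sextic lying in the common closure of the two chambers. ``Contracting an oval to an acnode'' is not an operation one can simply perform inside the space of sextics; whether such a wall curve exists is exactly what has to be proved, and Itenberg establishes it by explicit constructions and by the classification of one-nodal sextics (via the lattice theory of the associated K3 surfaces), not by inspecting real schemes.

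The $(\mathrm{hyp})$--$(11)$ obstruction, which you correctly identify as the crux, is likewise only a plan: ``tracking Rokhlin's complex orientation data'' and claiming the limiting curve is ``non-generic'' is not an argument, and you concede this is the step for which Itenberg's methods are needed --- so that part is an acknowledged gap, not a proof. Your third part is correct and self-contained: the edge from type $1$ to the empty type $0$ cannot be realized by fusing, since a crunodal curve has nonempty real locus and both of its real smoothings do as well (equivalently, fusing requires two ovals in the upper type), so this edge is shrinking-only and witnesses the literal statement ``not every edge is realized by fusing.'' Be aware, however, that the fact the paper invokes is much stronger: Itenberg showed that the edge from $(11)9$ to $10$, where fusing \emph{is} combinatorially possible, is nevertheless not realized by fusing --- a statement your trivial example does not touch.
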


\begin{proof}
The first statement is \cite[Prop.~2.1]{itenberg}. Furthermore, it was shown in \cite{itenberg} that the transition from $(11)9$ to $10$ cannot be realized by fusing.
\end{proof}

One possible way of explicitly realizing edges by fusing is to use Gudkov's constructions \cite{Gud} and the following lemma which is a special case of a theorem due to Brusotti \cite{Brus}.

\begin{lemma}
 Let $C_1, C_2 \subset \PP^2$ be two smooth real curves of degrees $2$ and $4$ (resp.~$1$ and $5$) intersecting transversally. 
 By a small perturbation, we can fix any one of the real nodes of the sextic curve $C_1\cup C_2$ and perturb all the others independently in any prescribed manner.
\end{lemma}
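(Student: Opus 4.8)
The plan is to produce the perturbation in the form $f_1 f_2 + \epsilon g$ for a suitable real sextic $g$ and all small $\epsilon > 0$, and to reduce the statement to the fact that certain points impose independent conditions on plane curves. Set $a = \deg C_1$ and $b = \deg C_2 = 6 - a$, so $a \in \{1,2\}$. Since $C_1$ and $C_2$ meet transversally, Bézout gives $n = ab = a(6-a)$ distinct points $C_1 \cap C_2 = \{p_1, \dots, p_n\}$, and these are precisely the nodes of $V(f_1 f_2)$. Relabel so that the real nodes come first, and let $p_i$ be the one we wish to keep.

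First I would record how a single node responds to the perturbation. In local analytic coordinates $(u,v)$ at a node $p_k$, whose two branches are $\{u=0\}$ and $\{v=0\}$, we have $f_1 f_2 = uv$ up to a unit, so $f_1 f_2 + \epsilon g = uv + \epsilon\, g(p_k) + O(2)$. Because the Hessian of $uv$ is nondegenerate, the implicit function theorem yields a unique critical point near $p_k$, and a short computation shows its critical value equals $\epsilon\, g(p_k) + O(\epsilon^2)$. Hence if $g(p_k) \ne 0$ the node is smoothed, and its two local smoothings are distinguished by the sign of $\epsilon\, g(p_k)$; if instead $g$ is \emph{singular} at $p_k$ (its $1$-jet vanishes there) the critical point stays at $p_k$ with value $0$ and nondegenerate $2$-jet, so the node is preserved. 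It therefore suffices to find a real sextic $g$ that is singular at $p_i$ and takes a prescribed nonzero value, of prescribed sign, at every other real node. Indeed, on the complement of small balls around $p_1,\dots,p_n$ the gradient of $f_1 f_2$ is bounded away from $0$, so for small $\epsilon$ the curve $V(f_1 f_2 + \epsilon g)$ is smooth there; inside the balls it has its unique singular point at $p_i$ and the prescribed smoothing at the other real nodes, while the complex nodes (where $g \ne 0$ for a generic such $g$) are smoothed and do not affect the real topology.

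It remains to prove two independence statements: (i) the nodes impose independent conditions on sextics, which already gives the classical Brusotti-type freedom for all nodes; and (ii) among the sextics singular at $p_i$, the remaining real nodes still impose independent conditions. Both follow by restricting to the rational component $C_1 \cong \PP^1$. Restriction $\RR[x,y,z]_d \to H^0(C_1, \mathcal{O}_{C_1}(d))$ is surjective, since its cokernel is $H^1(\PP^2, \mathcal{O}(d-a)) = 0$, and a degree-$d$ form on $C_1$ is a binary form of degree $ad$, so its values at any $ad+1$ distinct points of $C_1$ are unconstrained. For (i) take $d = 6$: there are at most $a(6-a) \le 6a+1$ nodes, so their values on sextics are free. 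For (ii) I would exhibit, for each real node $p_j \ne p_i$, a sextic $g_j = \ell_i^2\, q_j$, where $\ell_i$ is a real line through $p_i$ avoiding all other nodes—so $\ell_i^2$ is singular at $p_i$ and nonzero at every $p_k$ with $k \ne i$—and $q_j$ is a quartic vanishing at the real nodes other than $p_i, p_j$ but not at $p_j$. Such a $q_j$ exists by the same restriction argument with $d = 4$, since the $a(6-a)-1 \le 4a+1$ points to be separated lie on $C_1$. The forms $g_j$ are singular at $p_i$ and have linearly independent evaluation patterns at the remaining real nodes, so their real span realizes every prescribed value there.

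The heart of the argument is thus entirely elementary: imposing independent conditions is easy here precisely because all nodes lie on the rational curve $C_1$, so the Bézout count $a(6-a)$ never exceeds the interpolation bounds $6a+1$ and $4a+1$ for $a \in \{1,2\}$. The step that requires the most care is the local deformation analysis of the second paragraph, which must simultaneously guarantee that $p_i$ survives as a node, that every other real node is smoothed in the chosen way, and that no stray singularity appears; this is where the nondegeneracy of the node (via the implicit function theorem) and the compactness argument for small $\epsilon$ are essential.
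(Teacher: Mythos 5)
Your proof is correct and follows essentially the same route as the paper's: both arguments reduce the statement to an interpolation problem on the rational curve $C_1\cong\PP^1$, producing a sextic $g$ that is singular at the node to be kept and has prescribed sign at each remaining real node (the paper works with the pull-back of the linear system of sextics singular at $q$, i.e.\ binary forms of degree $12$ with a double root at $q$, while you exhibit the explicit generators $\ell_i^2 q_j$). The difference is only one of detail: you spell out the Morse-type local analysis of the node, the $H^1$-vanishing surjectivity of restriction, and the handling of complex nodes and stray singularities, all of which the paper leaves implicit as standard Brusotti-type perturbation theory.
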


\begin{proof}
 Let $q,p_1,\ldots,p_{7}\in\PP^2_\RR$ be eight distinct real points lying on the smooth quadric $C_1$. We claim that,
 for every tuple $\epsilon\in\{\pm 1\}^{7}$,
 there is a sextic which is singular at $q$ and whose sign at $p_i$ is $\epsilon_i$.
 Let $L$ be the linear system of all sextic curves that are singular at $q$. The pull-back of $L$ to $C_1\cong\PP^1$ is the set of all bivariate forms of degree $12$ having a double root at $q$. Since for any distinct 7 points in $\RR$ there is a polynomial of degree $10$ that vanishes on all but one of these points, the claim follows.
 The other case (degrees $1$ and $5$) is analogous.
 \end{proof}

We might also approach these questions
 by computational means. This requires  a software tool for
 the following task. Consider two general sextics
$f,g \in \ZZ[x,y,z]_6$ and compute the univariate polynomial
$\Delta(f+tg)$ of degree $75$. For each of its real roots $t^*$,
we must decide if the transition at $t^*$ is a
shrinking of ovals, a fusing of ovals, or turning inside out.

Let us now examine our third discriminantal transition.
Turning inside out preserves the number of ovals, so it
is an operation that acts on each of the rows in Figure~\ref{fig:eins} separately.

\begin{proposition} \label{prop:turningout}
If we turn an outermost oval of a smooth sextic inside out, then the topological type of the resulting curve is the one obtained by reflecting Figure \ref{fig:eins} vertically.
If the curve was dividing, then it is non-dividing after turning inside out. Curves that are non-dividing can become either dividing or non-dividing. 
\end{proposition}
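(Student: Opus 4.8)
The plan is to analyze the turning--inside--out transition through the nodal curve that realizes it, exactly as in the proof of the preceding classification of discriminantal transitions and the lemma on shrinking. Let $C'$ be a sextic with a single crunode at $p$ whose two real branches are pseudolines, so that the two smooth resolutions $C^-$ and $C^+$ of $C'$ in a pencil through $C'$ are related by turning an outermost oval inside out. For the topological statement I would simply track the nesting data: turning the outermost oval $C_0$ inside out interchanges its two complementary regions, the inside disk and the outside M\"obius band, so every oval that was inside $C_0$ becomes an oval outside $C_0$ and conversely. On the labels of Section~\ref{1} this is the map $(k1)\,l \mapsto (l1)\,k$, together with the corresponding exchange $k \mapsto ((k-1)1)$ for the purely non-nested types, and by construction it is the vertical reflection of the poset in Figure~\ref{fig:eins}. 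The self--symmetric types, and the cases such as $({\rm hyp})$ and the eleven--oval types for which no such transition is available, are checked by direct inspection against the figure; note that any type for which turning inside out would have to land on a dividing type of the same row is excluded by the dividing statement proved next.

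For the dividing statement, let $\pi\colon \tilde{C}\to C'$ be the normalization. Then $\tilde{C}_\CC$ is a connected smooth complex curve of genus $9$, and by the characterization of turning inside out in the preceding theorem the two preimages $p_1,p_2$ of the node lie in \emph{different} connected components of $\tilde{C}_\RR$. Both complex resolutions $C^\pm_\CC$ are obtained from the \emph{same} surface $\tilde{C}_\CC$ by attaching a small tube, a $1$--handle joining neighborhoods of $p_1$ and $p_2$, raising the genus from $9$ to $10$; the two real smoothings $C^-$ and $C^+$ correspond to the two ways of attaching this tube compatibly with complex conjugation. Deleting the real locus cuts the tube along its two real arcs into two half--tubes, each joining a local half--neighborhood of $p_1$ to one of $p_2$.

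The conclusion then follows from a connectivity count on $\tilde{C}_\CC\setminus\tilde{C}_\RR$. If $\tilde{C}$ is non-dividing, this set is connected and stays connected after attaching either tube, so both $C^\pm$ are non-dividing. If $\tilde{C}$ is dividing, with halves $\Sigma^+\sqcup\Sigma^-$ exchanged by conjugation, then each real component borders both halves; since $p_1$ and $p_2$ lie on different real components, the two half--tubes either join $\Sigma^+$ to $\Sigma^+$ and $\Sigma^-$ to $\Sigma^-$ (one smoothing, leaving two halves, hence dividing) or join $\Sigma^+$ to $\Sigma^-$ (the other smoothing, merging everything into one half, hence non-dividing). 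So exactly one of $C^-,C^+$ is dividing in this case. Combining: a dividing resolution forces $\tilde{C}$ dividing and the sign--preserving attachment, whence the opposite resolution obtained by turning inside out is sign--reversing and non-dividing, proving the second sentence; a non-dividing resolution arises either when $\tilde{C}$ is non-dividing (so the other resolution is also non-dividing) or when $\tilde{C}$ is dividing and the given resolution is sign--reversing (so the other resolution is dividing), proving the third sentence.

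\textbf{Main obstacle.} The delicate point is the claim that the two real smoothings of the crunode realize respectively the sign--preserving and the sign--reversing attachment of the tube. This is a local--to--global statement about complex orientations: near $p$ one must verify that the resolutions $x^2-y^2=\pm\epsilon$ match the two real arcs of the tube to the four local half--neighborhoods in the two complementary non--crossing patterns, and then that, because $p_1$ and $p_2$ lie on distinct real components of $\tilde{C}$ so that no global orientation constraint links the two matchings, both the sign--preserving and the sign--reversing global attachment are actually attained. I expect this compatibility, essentially Rokhlin's complex--orientation rule for smoothings in local form, to be where the real work lies; the surrounding connectivity bookkeeping is then routine, and the topological reflection statement is an immediate consequence of the inside--outside exchange.
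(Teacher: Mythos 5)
Your proposal is correct and takes essentially the same approach as the paper: the paper's proof is exactly your local analysis, namely that near the crunode the complex curve is two discs $D_1,D_2$ meeting at $p$, the real locus cuts each into halves $D_i^+,D_i^-$, one smoothing connects $D_1^+$ with $D_2^+$ and the other connects $D_1^+$ with $D_2^-$, and the connectivity count then yields that a dividing curve has exactly one dividing smoothing while a non-dividing one has none. The local matching fact you single out as the ``main obstacle'' is precisely what the paper asserts without further computation (it follows from the elementary calculation with $xy=\pm\epsilon$ you sketch), and the first (reflection) statement is likewise treated by the paper as immediate from the inside/outside exchange, just as in your nesting bookkeeping.
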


\begin{proof}
Let $C$ be a real plane curve with exactly one ordinary crunode $p$. In a neighborhood of $p$, the Riemann surface $C_\CC$ is homeomorphic to the union of two discs $D_1$ and $D_2$ with $D_1\cap D_2=\{p\}$. The real part $C_\RR$ divides $D_1$ and $D_2$ into two connected components $D_1^+$, $D_1^-$ and $D_2^+$, $D_2^-$ respectively. One of the two possible smoothenings of the node $p$ connects $D_1^+$ with $D_2^+$ and the other one connects $D_1^+$ with $D_2^-$. Thus, if $C$ is dividing
then exactly one of the two deformations results in a dividing curve. Otherwise, both are non-dividing.
\end{proof}

Not every
vertical reflection in Figure~\ref{fig:eins} can be realized
geometrically by a discriminantal transition.
For instance,  the types $(91)1$ and $(11)9$ are related by a
vertical reflection. But both types are dividing, so
Proposition \ref{prop:turningout} implies that
they are not connected by turning inside out.
Put differently, these two components of $\PP^{27}_\RR \backslash \Delta$
 do not share a wall of codimension one. In fact, turning inside out can only happen for curves with at most $9$ ovals. Indeed, consider a plane sextic curve with exactly one crunode and $r$ connected components, one of which is the intersection of two pseudolines. The normalization of such a curve has genus $9$ and $r+1$ connected components. By Harnack's inequality this implies that $r\leq9$.

 We now come to the punchline of Section 4, namely, how the
 polynomials in Section~2 were created.  
 An established and powerful technique for constructing real varieties with prescribed
 topology is Viro's  {\em patchworking method}  \cite{Viro2}. 
 All $56$ topological types of smooth sextics can be
 realized by a version of patchworking known as {\em combinatorial
   patchworking}, which can also be interpreted in the language of
 tropical geometry. In that guise, one records the signs of the
 $28$ coefficients $c_{ijk}$ and represents their magnitudes by a
 regular triangulation of the Newton polygon.
 Transitioning from that representation to actual polynomials in
 $\ZZ[x,y,z]_6$ yields integer coefficients $c_{ijk}$ whose absolute
 values tend to be very large. We experimented with some of these
 sextics, but in the end we abandoned them for all but three types,
 because symbolic computation became prohibitively slow.

Instead, for deriving Proposition~\ref{prop:bigintegers},
we used that many types, especially the sextics with
$3$ to $7$ ovals, can be found by perturbing the union  of three quadrics intersecting transversally.
This process is shown
 in Figure \ref{fig:construction}. Some of the types with 8 and 9 components 
could also be constructed in a similar fashion.
This is reflected in our representatives in Section 2.

\begin{figure}[h]
\begin{center}
  \includegraphics[width=12cm]{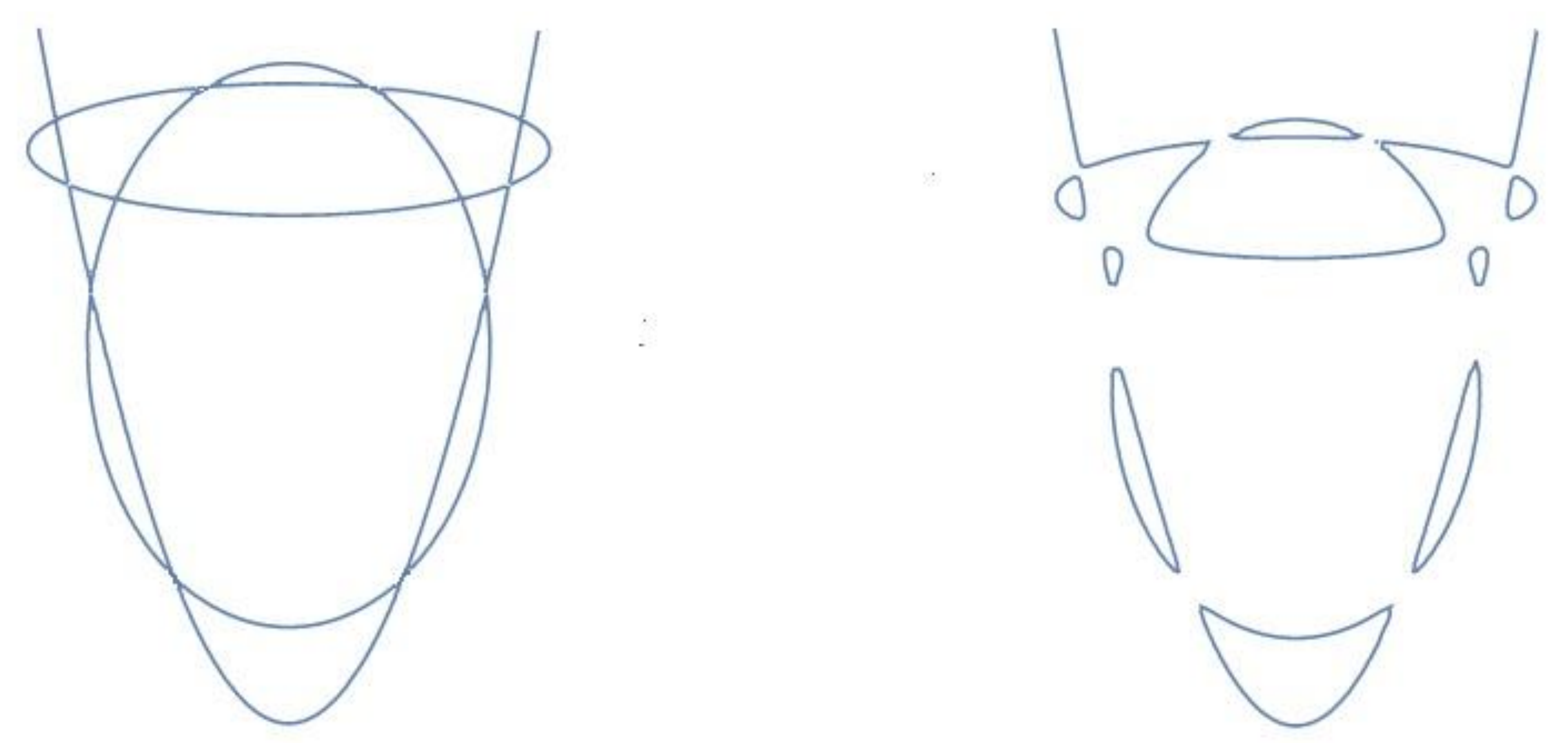}
  \end{center}
\vspace{-0.1in}
    \caption{\label{fig:construction} 
A sextic of Type $(11)7$ is constructed by perturbing the union of three quadrics.}
         \end{figure}

         For most other types, we carried out the classical
         constructions of Harnack and Hilbert, as explained by Gudkov
         \cite{Gud}.  We start with two quadrics intersecting in four
         real points,  pick eight points on the curves, and perturb
         the reducible quartic with the product of four lines through
         these points.  The smooth quartic is intersected with one of
         the original quadrics and perturbed again to get a smooth
         sextic.  The different ways in which the original curves and
         the points on them are selected give the different     types.          
         This method worked for almost all types.  For the
         construction of type (51)5, we tried Gudkov's method but found it
         too complicated to carry out explicitly. We employed Viro's method
         in \cite[\S 3.2]{Viro1} and
         built a curve of type (51)5 using patchworking on three touching ellipses.

\begin{figure}[h]
\begin{center}
\vspace{-0.2in}
  \includegraphics[height=8cm]{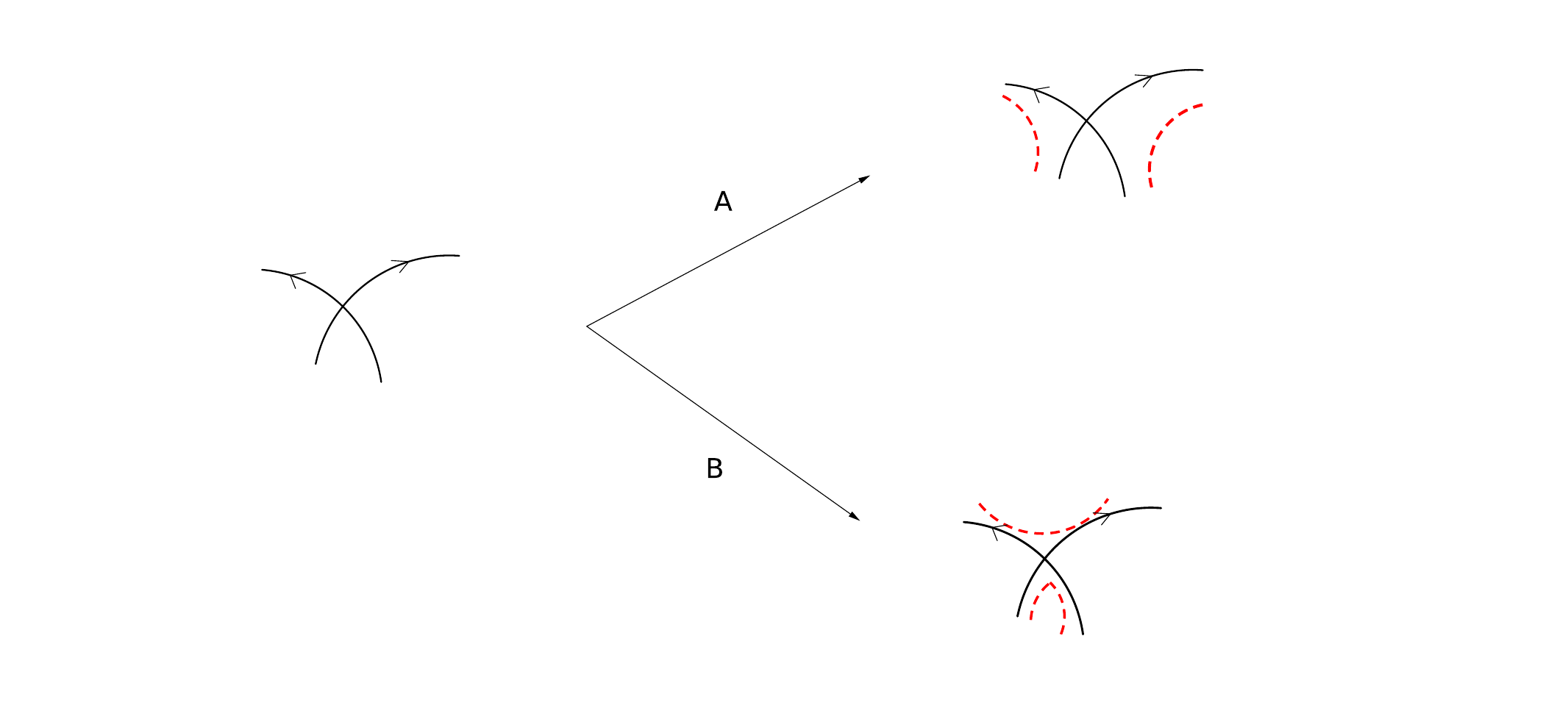}
  \end{center}
\vspace{-0.52in}
    \caption{\label{fig:drei} 
Using local perturbations to create sextics
  that are dividing or non-dividing
}
         \end{figure}
         
These constructions enabled us to find representatives
for the $56$ topological types. What remained was the issue
of distinguishing between dividing and non-dividing curves.
In particular, we needed to find two representatives for the
pairs of rigid isotopy types in the eight Nikulin cases (\ref{eq:8areboth}). To construct those, we
considered dividing curves of degree $d_{1}$ and $d_{2}$, with prescribed orientations,
that intersect in $d_{1}\cdotp d_{2}$ real points. The singular points of the reducible curve can be 
made smooth in two ways, shown in red in Figure \ref{fig:drei}.
According to Fiedler \cite[\S 2]{Fiedler}, if \emph{all} intersections 
are perturbed either using only  A or using only B then it is dividing. However, if the smoothening
is done  via A at some crossings
and  via B at other crossings, then the resulting smooth curve does not divide its 
 Riemann surface. In particular, for the construction of types (41)4d and (41)4nd we 
 followed \cite[page 273]{Gab3}.

At this point, we had $64$ representative sextics, and each 
of them was certified by our code {\tt SexticClassifier}. However,
the coefficient size for most of them was still unsatisfactory. To improve
the representatives, and to arrive at the list that is displayed in Section 2,
Sylvester's formula for the discriminant (Proposition \ref{prop:sylvester})
proved again to be very helpful.

 There are basically two heuristical methods that we used. One is to shrink the absolute value of each coefficient of a given representative separately as far as possible without crossing the discriminant locus. The other way is to choose a prime number $p$ and vary the polynomial without crossing the discriminant locus so that every coefficient of the resulting polynomial is divisible by $p$. In our experience, a combination of the two methods yields the best results.

\section{Avoidance Locus, Dual Curve, and Bitangents}

Many software packages for plane curves, such as those 
discussed at the beginning of Section~3, work with affine coordinates.
They often assume that the given curve is compact, so its closure
 in $\PP^2_\RR$ is disjoint from
a distinguished line, namely, the line at infinity.
We saw in  Example~\ref{ex:emptyavoidancelocus} that no such
line exists for some sextics. This motivates the
concept of the avoidance locus, to be introduced and studied in this section.
This will lead us naturally to computing dual curves
and bitangent lines, and to investigating the reality of these objects.

Let $C$ be a smooth real curve of even degree $d$ in $\PP^2$.
Its {\em avoidance locus} is the set
$\mathcal{A}_C$ of all lines in $\PP^2_\RR$ that do not intersect the real curve $C_\RR$.
Thus, $\mathcal{A}_C$ is a semi-algebraic subset of the dual
projective plane $(\PP^2)^\vee_\RR$. We write $C^\vee$ for the curve
of degree  $d(d-1)$ in $(\PP^2)^\vee$ that is dual to $C$. The points on $C^\vee$
correspond to lines in $\PP^2$ that are tangent to~$C$.
The real dual curve $C^\vee_\RR$ divides the real projective plane $(\PP^2)^\vee_\RR$
into connected components.

\begin{proposition}
Up to closure,
the avoidance locus $\mathcal{A}_C$ is a union of
connected components of $(\PP^2)^\vee_\RR \backslash C^\vee_\RR$.
Each component appearing in $\mathcal{A}_C$ is convex,
when regarded as a cone in $\RR^3$.
\end{proposition}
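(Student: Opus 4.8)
The plan is to establish the two assertions separately: the decomposition from a locally-constant count of real intersection points, and the convexity by exhibiting each avoidance chamber as an intersection of open half-spaces in $\RR^3$.

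First I would prove the decomposition. The avoidance locus $\mathcal{A}_C$ is open, since $C_\RR$ is compact and the lines missing a fixed compact set form an open subset of $(\PP^2)^\vee_\RR$. A real line lies off the dual curve, i.e.\ in $(\PP^2)^\vee_\RR\setminus C^\vee_\RR$, precisely when it is tangent to $C_\CC$ nowhere, so that it meets $C_\CC$ in $d$ distinct points. On this open set the number $n(\ell):=|\ell\cap C_\RR|$ is locally constant, because two real intersection points can merge, or a conjugate pair of real points be created, only when $\ell$ acquires a tangency to $C_\CC$, that is, only when $\ell$ crosses $C^\vee_\RR$. Hence $n$ is constant on each connected component of $(\PP^2)^\vee_\RR\setminus C^\vee_\RR$, and $\mathcal{A}_C$ coincides, off the dual curve, with the union of the chambers on which $n\equiv 0$.

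The only lines of $\mathcal{A}_C$ that actually lie on $C^\vee_\RR$ are the real bitangents whose two tangency points are complex conjugate; these are isolated real points (acnodes) of $C^\vee_\RR$ and sit in the closure of the $n\equiv 0$ chambers, while every line tangent to $C$ at a \emph{real} point meets $C_\RR$ and is excluded. Passing to closures thus identifies $\mathcal{A}_C$ with a union of chambers, proving the first assertion. For the second, I would pass to $\RR^3$: writing $H_a=\{x:\langle a,x\rangle=0\}$ for the plane dual to $a$, a line $\ell_a$ avoids $C_\RR$ iff $\langle a,\tilde p\rangle\ne 0$ for every lift $\tilde p$ of every $p\in C_\RR$. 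Now fix a component $R$ of $\mathcal{A}_C$ and let $\hat R^+$ be a connected component of its preimage in $\RR^3\setminus\{0\}$. For each fixed $p\in C_\RR$ the functional $a\mapsto\langle a,\tilde p\rangle$ is continuous and \emph{nonvanishing} on the connected set $\hat R^+$, hence of constant sign; choosing the lift $q_p$ for which this sign is positive yields $\langle a,q_p\rangle>0$ for all $a\in\hat R^+$. Therefore $\hat R^+\subseteq K:=\bigcap_{p\in C_\RR}\{a:\langle a,q_p\rangle>0\}$, an intersection of open half-spaces through the origin and hence an open convex cone. To obtain equality, note that any $a\in K$ satisfies $\langle a,\tilde p\rangle\ne 0$ for all $p$, so $K$ lies in the lifted avoidance locus; being convex, hence connected, and meeting the component $\hat R^+$ of that locally connected open set, $K$ must lie inside $\hat R^+$. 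Thus $\hat R^+=K$ is convex.

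I expect the main obstacle to be the first assertion rather than the convexity. The delicate point is to pin down exactly which lines of $\mathcal{A}_C$ touch $C^\vee_\RR$: one must verify that they are only the isolated acnodes arising from complex-conjugate bitangency, so that the identification with the $n\equiv 0$ chambers is correct \emph{up to closure}, and that no real tangent line at a real point slips into $\mathcal{A}_C$. Once the chamber description is in hand, the convexity is an essentially formal consequence of the single observation that a chamber is carved out by the linear sign conditions $\langle a,q_p\rangle>0$; in particular the same inequality argument forces the preimage of $R$ to split into two antipodal sheets, so that $\hat R^+$ is genuinely a pointed convex cone.
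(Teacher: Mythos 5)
Your proof is correct and follows essentially the same route as the paper: the first part is the paper's argument verbatim (real intersection behavior is locally constant off $C^\vee_\RR$, with the same ``up to closure'' caveat for real lines tangent to $C$ only at non-real points), and your convexity argument---cutting out the lifted component by the linear conditions $\langle a,q_p\rangle>0$ and using connectedness for the reverse inclusion---is just an explicit, hands-on version of the paper's one-line appeal to convex duality with the convex hull of $C_\RR$ in an affine chart $\PP^2_\RR\backslash L$, $L\in\mathcal{A}_C$. The only cosmetic point is that your claim that $K$ is \emph{open} (an infinite intersection of open half-spaces) needs compactness of $C_\RR$ and continuity of $p\mapsto q_p$, but openness is never actually used, since only convexity and connectedness of $K$ enter the argument.
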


\begin{proof} 
Points in  $(\PP^2)^\vee_\RR \backslash C^\vee_\RR$
correspond to real lines that intersect $C$ transversally.
Whether that intersection contains real points or not 
does not change unless the curve $C^\vee_\RR$ is crossed.
Hence $\mathcal{A}_C  \backslash C^\vee_\RR$ is a union of 
connected components of  $(\PP^2)^\vee_\RR \backslash C^\vee_\RR$.
Each such component is convex: it is the convex dual of the
convex hull of $C_\RR$ in the affine space
$\PP^2_\RR \backslash L$, where $L \in \mathcal{A}_C$.
The prefix ``up to closure'' is needed because 
$\mathcal{A}_C$ also contains some points in $C^\vee_\RR$,
corresponding to real lines that do not meet $C_\RR$ but
are tangent to $C$ at complex points.
\end{proof}

  \begin{figure}[h]
  \begin{center}
     \includegraphics[width=5.6cm]{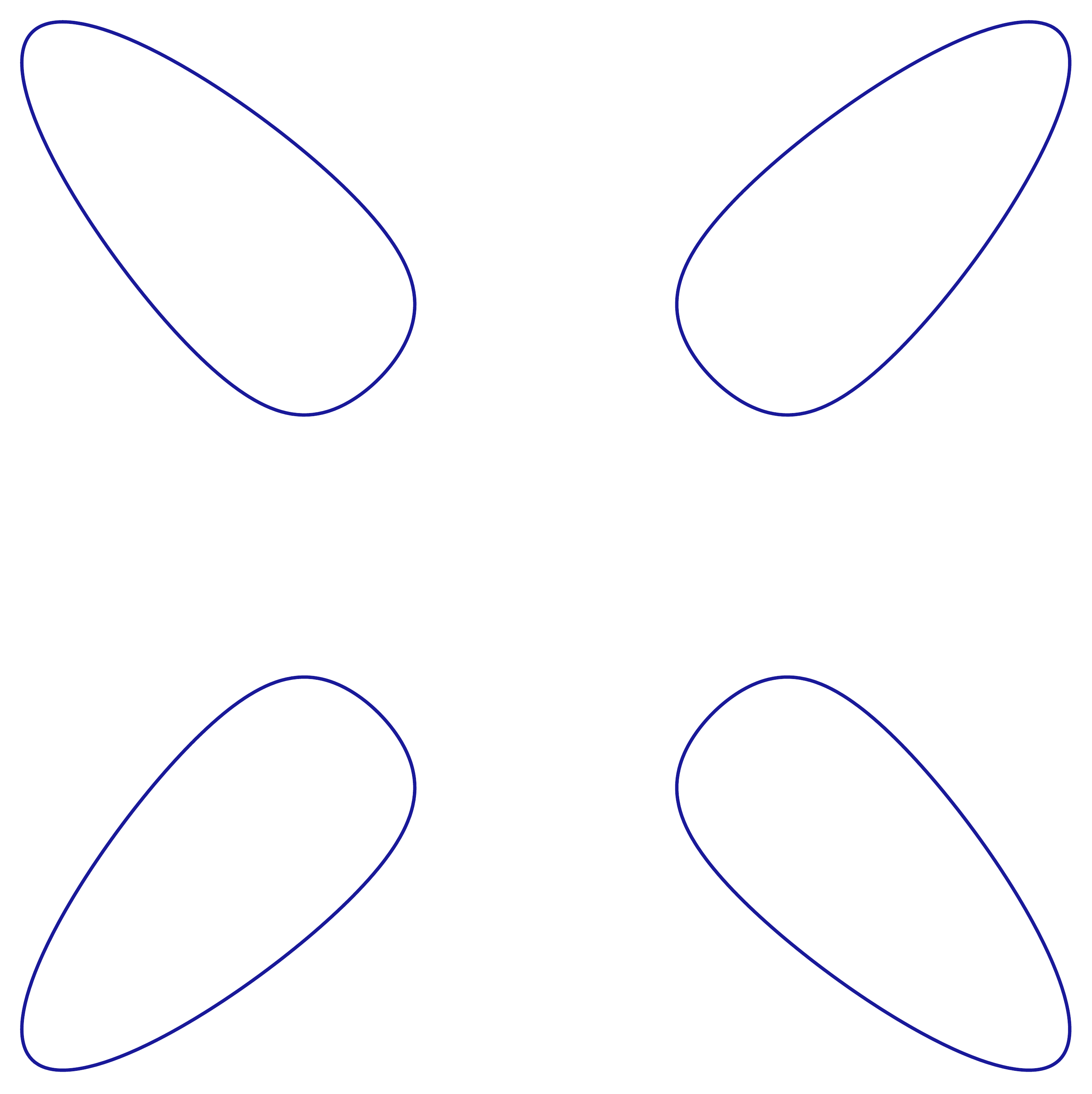} \qquad \qquad
    \includegraphics[width=5.7cm]{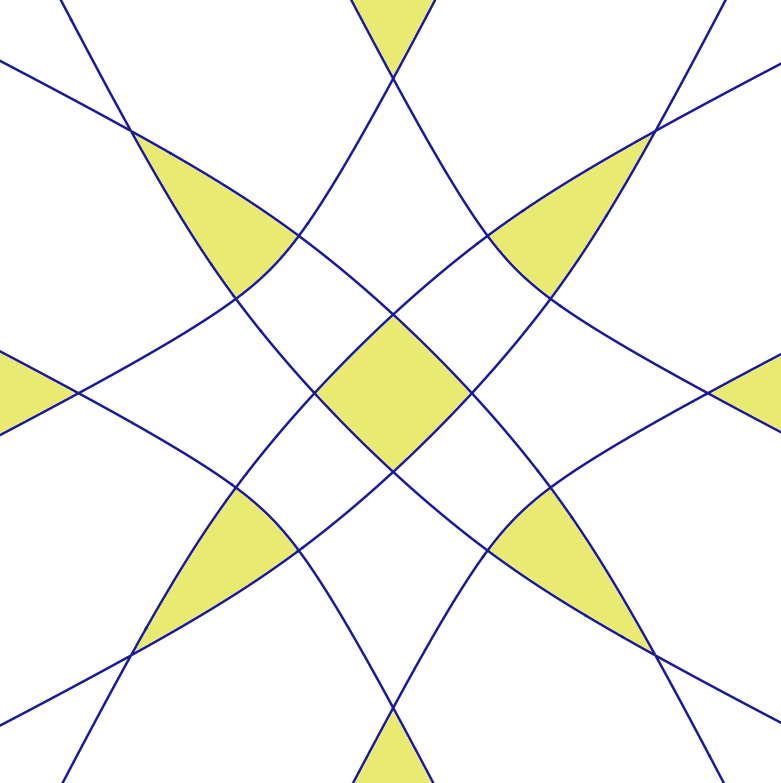}
     \vspace{-0.1in}
  \end{center}
   \caption{\label{fig:edge}   The Edge quartic $C$ and its dual $C^\vee$;
  the avoidance locus $\mathcal{A}_C$ is colored.}
  \end{figure}
  
\begin{example} \label{ex:p4} \rm
Let $d=4$ and consider the  \emph{Edge quartic} $C$,
taken from \cite[equation~(1.5)]{PSV}:
\[
  25 \left(x^4+y^4+z^4\right)-34 \left(x^2 y^2+x^2 z^2+y^2 z^2\right)\
  = \ 0.
\]
This curve $C \subset \PP^2$ served as a running example in \cite{PSV}.
It is shown on the left in Figure~\ref{fig:edge}.
We choose coordinates $(u:v:w)$ for points in the dual
projective plane $(\PP^2)^\vee$. Such a point represents the line
$L = \{ux+vy+wz=0\}$ in the primal $\PP^2$. 
The dual curve $C^\vee$ is given by

\vspace*{-.5em}
{\footnotesize
$$ \begin{matrix}
&10000 u^{12}-98600 u^{10} v^2-98600 u^{10} w^2+326225 u^8 v^4+85646
  u^8 v^2 w^2
+326225 u^8 w^4-442850 u^6 v^6\\
&-120462 u^6 v^4 w^2-120462 u^6 v^2 w^4-442850 u^6 w^6+326225 u^4
  v^8-120462 u^4 v^6 w^2+398634 u^4 v^4 w^4 \\ &
-120462 u^4 v^2 w^6 
  +326225 u^4 w^8-98600 u^2 v^{10}+85646 u^2 v^8
  w^2-120462 u^2 v^6 w^4-120462 u^2 v^4 w^6
\\ &+85646 u^2 v^2 w^8
-98600 u^2 w^{10}+10000 v^{12}-98600 v^{10} w^2+326225 v^8 w^4-442850
  v^6 w^6+326225 v^4 w^8 \\ &
-98600 v^2 w^{10}+10000 w^{12} \qquad = \qquad 0 . \qquad
\end{matrix} $$
}
The dual curve $C^\vee_\RR$ divides $(\PP^2)^\vee_\RR$
into $21$ open regions. Seven of the regions comprise the
avoidance locus $\mathcal{A}_C$. They are colored in
Figure \ref{fig:edge}, and they represent the
seven ways of bipartitioning the four ovals of $C_\RR$
by a straight line. The convex body dual to the
convex hull of $C_\RR$, in our affine drawing on the left,
is the innermost yellow region on the right.
\hfill $\diamondsuit$
\end{example}

The number seven of yellow regions seen in Figure~\ref{fig:edge}
attains the following upper bound.

\begin{proposition}
\label{prop:plane}
Let $C$ be a smooth real curve of even degree $d$ in $\PP^2$.
The number of open convex sets in the dual plane
that make up the avoidance locus $\mathcal{A}_C$ is bounded above by
\begin{equation}
\label{eq:p-polynomial}
\frac{9}{128}d^4-\frac{9}{32}d^3+\frac{15}{32}d^2-\frac{3}{8}d+1.
\end{equation}
\end{proposition}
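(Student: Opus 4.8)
The plan is to reduce the count of convex regions of $\mathcal{A}_C$ to a purely combinatorial question about the ovals of $C_\RR$, and then to bound the relevant quantity using classical inequalities for real plane curves. Write $C_\RR = O_1 \cup \cdots \cup O_m$ for the ovals (all components are ovals since $d$ is even), call an oval \emph{outermost} if it is not contained in any other oval, and let $k$ be the number of outermost ovals. First I would attach to each region $R$ of $\mathcal{A}_C$ the bipartition of the outermost ovals that it induces: choosing a line $L \in R$, all ovals become compact in the affine chart $\PP^2_\RR \setminus L$, and a nearby avoiding line splits them into two honest sides, with each nested family accompanying its outermost container (a line missing an oval cannot enter the disk it bounds, so everything inside lies on the same side). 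Since moving $L$ within $R$ never crosses an oval, and hence never crosses $C^\vee_\RR$, this unordered bipartition $(A,B)$ is constant on $R$, so the assignment $R \mapsto (A,B)$ is well defined.

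The key claim of this first step is that the assignment is injective, so that the number of regions is at most the number of realizable bipartitions. For injectivity I would argue that the lines realizing a fixed bipartition $(A,B)$ are exactly the lines strictly separating the disjoint convex hulls $\mathrm{conv}(A)$ and $\mathrm{conv}(B)$ in the chart, and that such separating lines form a connected family in $(\PP^2)^\vee_\RR$; hence they lie in a single region. This is the step I expect to be the main obstacle, because the notion ``which side of $L$'' only makes sense after passing to the complementary affine chart, so one must check that the separating-line family is genuinely connected across the various charts and that the trivial bipartition (all ovals on one side) behaves correctly. The convexity statement established for the previous proposition does most of the local work.

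The second step is the combinatorial bound: the number of linear bipartitions of $k$ pairwise disjoint convex bodies in the plane is at most $\binom{k}{2}+1$. I would contract each cluster to a point; a line missing all bodies separates the points exactly as it separates the bodies, so the number of realizable bipartitions is at most the number of linear bipartitions of $k$ points, which is maximized by points in convex position, where it equals $\binom{k}{2}+1$ (the separable subsets are precisely the cyclic arcs). Together with the first step this gives at most $\binom{k}{2}+1$ regions, matching the Edge quartic, where $k=4$ and $\binom{4}{2}+1 = 7$.

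The third step bounds $k$ by $n(d) := \tfrac{3}{2}\tfrac{d}{2}\bigl(\tfrac{d}{2}-1\bigr)+1$. Starting from $C$, I would repeatedly shrink an innermost (hence empty) oval, a contraction in the sense of Itenberg \cite{itenberg} realized by a discriminantal transition that keeps the curve smooth of degree $d$, until only the $k$ outermost ovals remain. The resulting smooth curve has exactly $k$ ovals, all non-nested, hence no odd ovals, so Petrovsky's inequality \cite{Pet} for degree $d$ yields $k \le \tfrac{3}{2}\tfrac{d}{2}\bigl(\tfrac{d}{2}-1\bigr)+1 = n(d)$; note that applying Harnack and Petrovsky to $C$ directly is too weak, and it is the reduction to a non-nested curve that makes the bound sharp. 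Finally, since $\binom{k}{2}+1$ is increasing in $k$, the number of regions is at most $\binom{n(d)}{2}+1$, and a direct expansion with $n(d) = \tfrac{3}{8}d^2 - \tfrac{3}{4}d + 1$ shows $\binom{n(d)}{2}+1 = \tfrac{9}{128}d^4-\tfrac{9}{32}d^3+\tfrac{15}{32}d^2-\tfrac{3}{8}d+1$, which is the claimed bound.
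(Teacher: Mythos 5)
Your overall architecture is the same as the paper's: label each component of $\mathcal{A}_C$ by a bipartition of the outermost ovals, count realizable bipartitions by the $\binom{m}{2}+1$ regions of the dual arrangement of $m$ points, and bound the number of outermost ovals by $\frac{3}{8}d^2-\frac{3}{4}d+1$. But your third step contains a genuine gap. To bound the number $k$ of outermost ovals you assert that one can repeatedly contract innermost empty ovals through discriminantal transitions ``that keep the curve smooth of degree $d$'' until only the outermost ovals remain, and then apply Petrovsky to the resulting non-nested curve. The existence of such a contraction is a hard realizability statement, not a formal operation, and it is false in general: Theorem \ref{thm:itenberg} of this very paper (Itenberg \cite{itenberg}) states that for sextics the edge between $({\rm hyp})$ and $(11)$ is the one edge \emph{not} realized by shrinking an empty oval. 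In other words, the innermost (empty) oval of a hyperbolic sextic cannot be contracted through an acnodal curve; the very first contraction your procedure requires can be geometrically impossible. The paper sidesteps this entirely by invoking Arnold's inequality \cite{Arn}, a strengthening of Petrovsky \cite{Pet} that bounds the number of outermost (indeed even) ovals of the original curve $C$ directly by $\frac{3}{8}d^2-\frac{3}{4}d+1$, with no degeneration at all. Your remark that plain Petrovsky applied to $C$ is too weak (because of possible odd ovals) is exactly right --- but that is precisely why Arnold's refinement, rather than an unproved contraction argument, is the needed input. Replacing your step 3 by the citation of \cite{Arn} repairs the proof.

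On the other steps: your counting of bipartitions agrees with the paper's (which phrases it dually as the region count of an arrangement of $m$ general lines in $\PP^2_\RR$), and your worry about injectivity of the labeling is legitimate --- the paper asserts it with the single word ``uniquely.'' Two caveats on your sketch. First, for non-convex ovals it is not true that the lines realizing a bipartition $(A,B)$ are exactly those separating ${\rm conv}(A)$ from ${\rm conv}(B)$: a line can pass through a concavity of an oval, meeting its convex hull while avoiding $C_\RR$. Second, a single line does not disconnect $\PP^2_\RR$, so ``which side'' must be defined with care. Both issues can be resolved at once: a segment joining two avoiding lines inside one region of the dual point arrangement is a pencil of lines through their common intersection point $q$, and the shadow of each outermost disk under projection from $q$ is a connected arc of that pencil containing the direction of the marked interior point; since the endpoints of the segment lie outside every such arc and the segment avoids the marked directions, every line of the segment misses every outermost disk, hence misses $C_\RR$. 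This shows each region of the point arrangement contains at most one component of $\mathcal{A}_C$, which is the inequality both you and the paper actually need.
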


\begin{proof}
  By Harnack's inequality, $C_\RR$ can have at most
  $\binom{d-1}{2}+1$ ovals in $\PP^2_\RR$.  However, for our count we
  only care about the outermost ovals, i.e.~those not contained inside
  any other oval. By a result of Arnold in \cite{Arn}, which is a more precise version of a classical inequality due to Petrovsky \cite{Pet}, the number of
  outermost ovals of the curve $C_\RR$ is at most
 \[
m\,\,=\,\,\frac 38 d^2-\frac 34 d+1.
\]
Pick a generic point in each oval.  Then the configuration of points has
$\binom{m}{2}+1$ bipartitions that can be realized by a straight line. 
Indeed, dually, this is the number of regions in the complement
of a general arrangement of $m$ lines in the plane $\PP^2_\RR$.  
The quartic polynomial in (\ref{eq:p-polynomial}) is simply
$\binom{m}{2}+1$ with Petrovsky's expression for $m$.
It remains to be seen that this number is the desired upper bound.
Indeed, every connected component  of $\mathcal{A}_C$
is uniquely labeled by a bipartition of the set of non-nested ovals.
The number of such bipartitions that are realized by a straight line
is bounded above by the said bipartitions of the points.
\end{proof}

The upper bound  in (\ref{eq:p-polynomial}) evaluates to $46$
for $d=6$.  Here is a sextic that attains the bound.

\begin{example} \label{ex:scheiderer} \rm
Let $t$ and $\epsilon $ be parameters, and consider  the following net of sextics:
\begin{align*}
F_{t,\epsilon}\ =\ &60x^6-750x^5z-111x^4y^2+1820x^4z^2+700x^3y^2z-2250x^3z^3+20x^2
y^4\\ &-1297x^2y^2z^2+960x^2z^4-56xy^4z+1440xy^2z^3-y^6-576y^2z^4\\
&+t(x^3+xz^2-y^2z)^2 + \epsilon(x^2z^4+y^2z^4+z^6)
\end{align*}
 For $\,t_0=-\frac{1645}{2}-150\sqrt{34}\,$
and $\,\epsilon=0\,$, the sextic $F_{t_0,0}$ has $10$ isolated real singular points:
\[
\bigl((3-\sqrt{34})/5:0:1\bigr), (0:0:1), (1:\pm\sqrt{2}:1), (2:\pm\sqrt{10}:1), (3:\pm\sqrt{30}:1),
(4:\pm 2\sqrt{17}:1).
\]
No three of these $10$ points lie on a line.
For any sufficiently small $\epsilon > 0$ and  $t$
sufficiently close to $t_0$, the sextic $F_{t,\epsilon}$ is
smooth with $10$ small ovals arranged around the singular
points of $F_{t_0,0}$. When these ovals are small enough, the
avoidance locus will have the maximum number $46$ of connected
components, by the argument given in the proof of
Proposition~\ref{prop:plane}.
This example was found using the construction developed by
Kunert and Scheiderer in \cite{KS}.
\end{example}

We now describe an algorithm for computing the avoidance locus 
$\mathcal{A}_C$ of a smooth 
curve $C$. The first step is to find all bitangents  of $C$. 
A {\em bitangent} of $C$ is a line $L$ in $\PP^2$ that
is tangent to $C$ at two points.
Note that bitangents of $C$ correspond to nodal singularities of the dual curve $C^\vee$.
By the Pl\"ucker formulas, the
expected number of bitangents is $ (d-3)(d-2)d(d+3)/2$, which is $324$ for $d=6$.
A bitangent $L$ is called \emph{relevant} if the real part of the
divisor $L\cap C$ is an even divisor on the curve $C$. For generic curves $C$, 
this means that $L$ has no real intersection points with $C$ except
possibly the two points of tangency. If these two points are real, then
$L$ is an extreme point of a convex connected component of $\mathcal{A}_C$.

\begin{remark}\label{rem:relevantbitangents} \rm
Let $C$ be a smooth curve in $\PP^2_\RR$ of  degree $d \geq 4$. If $C$
contains at least two outermost ovals or has a non-convex outermost oval, then
every connected component of the avoidance locus 
$\mathcal{A}_C$ has a relevant bitangent in its closure.
If $C$ does not satisfy this hypothesis then
$\mathcal{A}_C$ is connected;
  we do not know  whether
  it always contains a real bitangent. In the case of quartics, this follows from the Zeuthen classification \cite[Table 1]{PSV}.
   In that case, however, the number of bitangents only depends on the topological type. In higher degrees, when this is not the case, not much seems known. (See Conjecture \ref{conjecture:realbitangents} below.)
\end{remark}

We now assume that $C$ satisfies the hypothesis in
Remark~\ref{rem:relevantbitangents}.  Our algorithm for computing
$\mathcal{A}_C$ is as follows.  First we compute linear forms
representing all real bitangents of $C$, and we discard those that are not
relevant.  Next, we compute a graph $\mathcal{G}_C$ whose nodes are the
relevant bitangents,
as follows: two linear forms $L_1$ and $L_2$ form an
edge if and only~if
\begin{enumerate}
\item[(a)] $L_1 + L_2$ lies in the avoidance locus $\mathcal{A}_C$, and \vspace{-0.1in}
\item[(b)] the open line segment $\{t L_1 +  (1-t) L_2 \,:\, 0 < t < 1 \}$ is disjoint
from the dual curve~$C_\RR^\vee$.
\end{enumerate}
Here the sign of the linear forms $L_1$ and $L_2$ for
the bitangents has to be chosen carefully.

\begin{remark}{\rm 
The graph $\mathcal{G}_C$ is a disjoint union of cliques,
one for each connected component of $\mathcal{A}_C$.
This follows from Remark~\ref{rem:relevantbitangents}
and convexity of the connected components.}
\end{remark}

In summary, given a smooth curve $C=V_\CC(f)$ of even degree $d$,
our algorithm computes the {\em avoidance graph} $\mathcal{G}_C$.
We represent the avoidance locus $\mathcal{A}_C$ by the
connected components (cliques) of $\mathcal{G}_C$. 
Midpoints of the segments in~(b) furnish
sample points in the components.

We made a proof-of-concept implementation of this algorithm
for the case of  sextics. Its two main ingredients are computing
the dual curve and computing the bitangents.
For the former task we solve a linear system of equations
in the $\binom{30+2}{2} = 496$ coefficients of $C^\vee$.
The equations are derived  by projecting $C $
from random points $p \in \PP^2$. The ramification locus
of this projection reveals (up to scaling) the binary form of degree $30$ that defines
 $ C^\vee \cap p^\perp $.
 
  To compute the bitangents, we employ the
variety of binary sextics with two double roots.
The prime ideal of this variety is defined by $13$ forms of degree $7$;
see the row labeled 2211 in \cite[Table 1]{LeSt}.
Substituting the binary form $f(x,y,  - \frac{1}{w}(u x + v y)) $ into
that ideal, and clearing denominators, yields the ideal in
$\QQ[u,v,w]$ that defines the $324$ bitangents $(u{:}v{:}w) \in (\PP^2)^\vee $.

\begin{example} \rm 
Let $C$ be the representative for Type 8nd displayed in Section 2.
This sextic curve has $324$ distinct complex bitangents
 of which $124$ are real. Of the real bitangents,
  \begin{itemize}
    \item $8$ are tangent at non-real points and meet the curve
    in two more non-real points; \vspace{-0.1in}
    \item $60$ are tangent at real
    points and meet the curve in two more non-real points; \vspace{-0.1in}
    \item $4$ are tangent at non-real points and meet the curve in two
    more real points; \vspace{-0.1in}
    \item $52$ are tangent at real points and
    meet the curve in two more real points.
  \end{itemize}
  Only the first two types are relevant, so $C$ has $68$ relevant bitangents.
The avoidance graph $\mathcal{G}_C$ is found to
consist of $14$ cliques: four $K_6$'s, five $K_5$'s, four $K_4$'s and one $K_3$.
Hence $\mathcal{A}_C$ consists of $14$ convex components.
  The curve $C$ together with its
   $68$ relevant bitangents is shown in Figure \ref{fig:avoidance8nd}.
   There are $14$ ways to bipartition the $8$ ovals by a line that avoids $C_\RR$.
  \begin{figure}[h]
  \begin{center}
     \includegraphics[width=10cm]{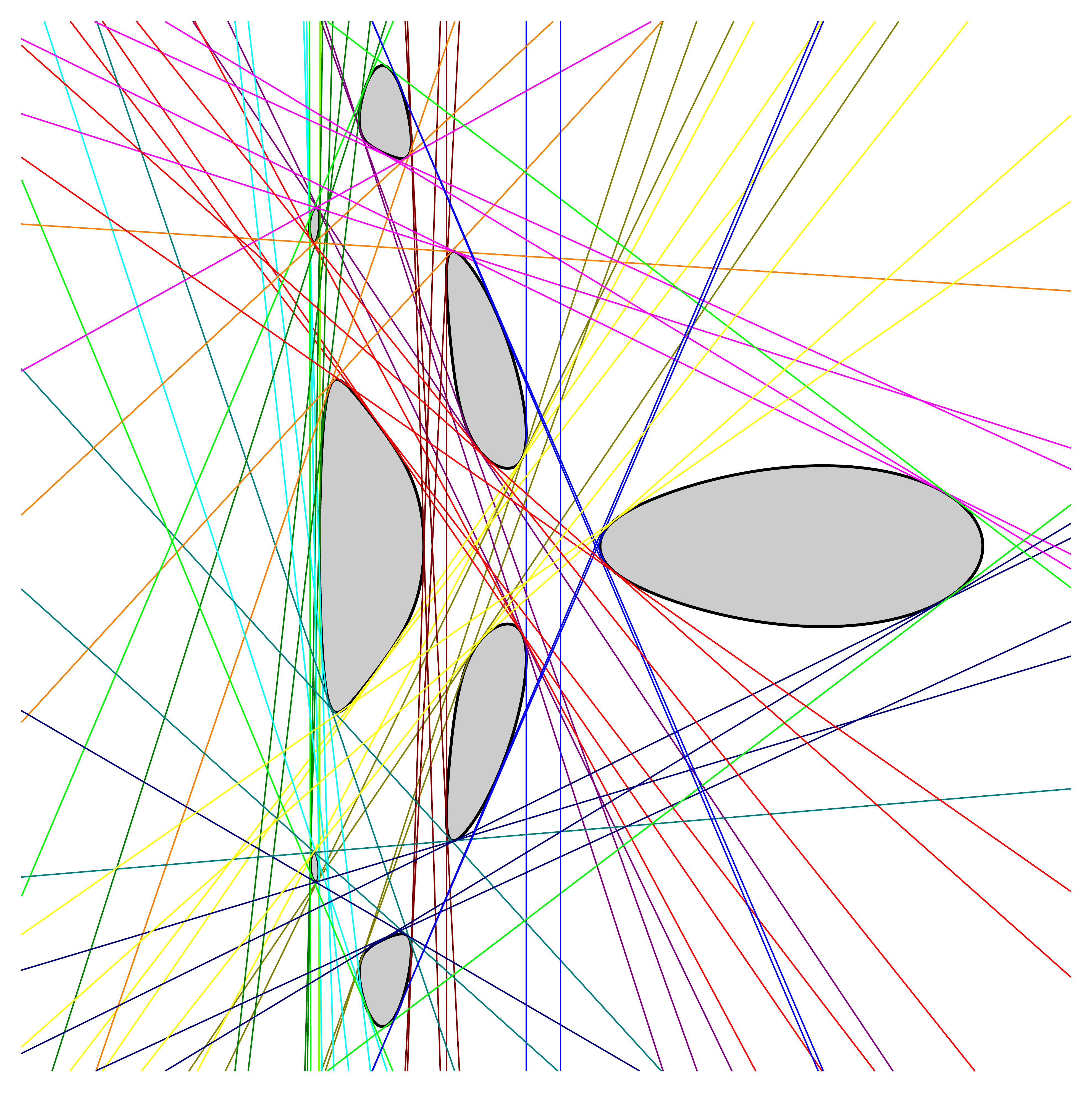}
  \end{center}
  \vspace{-0.2in}
  \caption{\label{fig:avoidance8nd} A sextic curve $C$   with $8$
    non-nested ovals; its $68$ relevant bitangents represent~$\mathcal{A}_C$.}
          \end{figure}
\end{example}

While the nodes on $C^\vee$ are bitangents of $C$,
the cusps on $C^\vee$ are the {\em flex lines} of $C$.
The number of {\em inflection points} is $3d(d-2)$ for a general curve
of degree $d$.
A classical result due to Felix Klein states that at most one third
of the complex inflection points of a real plane curve 
can be real.  Brugall\'e and L\'opez de Medrano \cite{BL} proved,
using tropical methods, that Klein's upper bound $d(d-2)$ is attained for all $d \geq 3$.
Hence, for smooth sextics, the number of real inflection points can
be any even integer between $0$ and $24$. 
The distribution of the numbers of real bitangents and real inflection points
over the $64$ rigid isotopy types is presented in Section~6.
 Based on our experiments, we propose the following conjecture:

\begin{conjecture}\label{conjecture:realbitangents}
  The number of real bitangents of a smooth sextic in $\PP^2_\RR$
  ranges from 12 to 306. The lower bound is attained by curves in the
  following four types: empty, {\rm 1}, {\rm 2}, {\rm (11)} and {\rm (hyp)}.  The upper bound is
  attained by certain $11$-oval curves of Gudkov-type (51)5.
\end{conjecture}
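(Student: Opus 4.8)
The plan is to split the conjecture into two tasks: (i) verifying that the claimed extreme values $12$ and $306$ are actually \emph{attained} by the indicated types, and (ii) establishing that these are genuine bounds valid for \emph{every} smooth real sextic. Task (i) is computational and squarely within reach of the tools already developed in Section~5; task (ii) is where the real difficulty lies, and is the reason the statement is phrased as a conjecture rather than a theorem.

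For the attainment in (i), I would start from the explicit representatives of Section~2 and run the bitangent computation of Section~5: substitute $f(x,y,-\tfrac{1}{w}(ux+vy))$ into the prime ideal of binary sextics with two double roots, clear denominators, and solve the resulting ideal in $\QQ[u,v,w]$ to recover all $324$ bitangents together with their field of definition. For the lower bound, I would exhibit curves of the five types empty, $1$, $2$, $(11)$, $(\mathrm{hyp})$ for which exactly $12$ bitangents are real, certifying the count by exact arithmetic (a Gr\"obner basis plus a rational univariate representation). For the upper bound I would take a Gudkov-type $(51)5$ representative and certify that $306$ bitangents are real, i.e.\ that exactly nine conjugate pairs remain complex. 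This direction is essentially book-keeping once the curves are in hand.

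The genuine obstacle is (ii). The principal complication, already flagged in Section~5, is that the number of real bitangents is \emph{not} constant on a rigid isotopy class: as $f$ varies inside a connected component of $\PP^{27}_\RR\setminus\Delta$, two real bitangents can collide and depart as a complex conjugate pair. Hence one cannot reduce the problem to checking the $64$ representatives. One must instead control the finer stratification of $\PP^{27}_\RR\setminus\Delta$ on whose open strata the count is locally constant, the walls being curves that carry a degenerate bitangent (where two bitangents coalesce, e.g.\ along a tritangent or a flex-bitangent). To bound the count I would seek an intrinsic description of the bitangents in terms of the genus-$10$ Riemann surface $C_\CC$---each bitangent cuts out a divisor $2p+2q+R$ with $R$ of degree $2$---and try to convert the Rokhlin-type topological constraints (the complex orientation formula, Harnack's inequality for the $11$ ovals, the dividing/non-dividing dichotomy recorded in Figure~\ref{fig:eins}) into congruences or inequalities forcing $12\le\#\{\text{real bitangents}\}\le 306$. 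A natural complementary approach is by deformation: determine how the count jumps across each type of wall, then propagate bounds from well-understood boundary configurations, such as unions of conics as in Figure~\ref{fig:construction}, into the interior of each component.

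The hard part, and the reason this remains a conjecture, is to prove the two inequalities over the \emph{entire} stratified space rather than merely at the sampled points. Establishing the lower bound $12$ for the ``least real'' types (empty and $(\mathrm{hyp})$) amounts to ruling out configurations with fewer real bitangents---equivalently, showing that at most $312$ of the $324$ can be simultaneously non-real---which I expect to hinge on a reality obstruction for bitangents of plane sextics that is not yet available in the literature; the matching upper bound $306$ for the $(51)5$ curves would require a dual argument forcing at least nine conjugate pairs to persist. Absent such a structural result, the present evidence is empirical, and I regard supplying the missing obstruction as the crux of any complete proof.
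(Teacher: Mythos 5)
The statement you are trying to prove is labeled a \emph{conjecture} in the paper, and the paper supplies no proof: its only support is empirical, namely the ranges in Table~\ref{tab:flexetc}, obtained by sampling sextics near the $64$ representatives of Section~2 via the local exploration method and counting real bitangents with the Section~5 machinery (the ideal of binary sextics with two double roots, specialized along lines). Your assessment is therefore essentially right, and your plan for the attainment part (i) coincides with what the paper actually did, with one genuine improvement: the paper's bitangent counts are \emph{numerical}, and the authors explicitly warn that real bitangents lying close together may be undercounted, whereas you propose exact certification (Gr\"obner basis plus rational univariate representation), which would upgrade the attainment claims for the types empty, $1$, $2$, $(11)$, $(\mathrm{hyp})$ (all observed at $12$) and for $(51)5$ (observed up to $306$) from empirical to proven. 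Your part (ii) correctly identifies the crux that the paper does not attempt: since the count of real bitangents is not an invariant of the rigid isotopy class (the paper states this, and its table shows ranges), no finite check of representatives can establish the bounds, and one must control wall-crossings across the bitangent discriminant $\mathcal{B}=\mathcal{T}\cup\mathcal{F}$ of Theorem~\ref{thm:bitangentdisc} --- precisely the stratification you invoke, which the paper introduces but uses only for the avoidance-locus corollary, not for bounding bitangent counts. Your suggested Rokhlin-type or Klein-type reality obstruction does not exist in the paper or its references, so that part remains speculative; but flagging it as the missing ingredient, rather than claiming it, is the correct reading of the situation. One small point: the conjecture's own wording says ``four types'' while listing five, an inconsistency in the paper that your write-up silently (and reasonably) corrects by treating all five listed types as attaining the lower bound, in agreement with the table.
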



The numbers of real inflection points and bitangents of a sextic $C$
often change when passing through the discriminant
hypersurface. However, they may also change within the same rigid
isotopy type. If $C$ is smooth then the total number of complex
inflection points resp.~bitangents drops below the bounds $72$
resp.~$324$ in the following three exceptional cases:
  \begin{enumerate}
  \item[(411)] $C$ has an \emph{undulation point}, in which the tangent
    meets $C$ with multiplicity at least $4$. \vspace{-0.1in}
  \item[(222)] $C$ has a \emph{tritangent line}, i.e.~a line that is tangent
    to $C$ in three distinct points. \vspace{-0.1in}
  \item[(321)] $C$ has a \emph{flex-bitangent}, i.e.~a line that meets $C$
    with multiplicity $3$ in one point and is tangent at another point.
  \end{enumerate}
  In each case, the stated property defines a
  hypersurface in $V = \RR[x,y,z]_6$. The number of real inflection
  points or bitangents changes only when passing through the
  discriminant or one of these hypersurfaces.  When generic sextics
  approach these hypersurfaces, three lines come together: a
  bitangent and two flex lines for an undulation point (411), three
  bitangents for a tritangent line (222), and two bitangents and a
  flex line for a flex-bitangent (321).

\begin{theorem}\label{thm:bitangentdisc}
  Let $\mathcal{T}$ be the Zariski closure in $\PP V = \PP^{27}$ of the set of smooth
  sextics  with a tritangent line and
    let $\mathcal{F}$ be the locus of smooth sextics with a
  flex-bitangent.  Then:
  \begin{enumerate}
    \item The loci $\mathcal{T}$ and $\mathcal{F}$ are irreducible hypersurfaces of
      degree $1224$ and $306$ respectively.
    \item Their union $\mathcal{B}=\mathcal{T}\cup\mathcal{F}$ is the
      {\em bitangent discriminant}, i.e.~the Zariski closure of the set of smooth
  sextics in $\PP V$ having fewer than $324$ bitangent lines.
  \end{enumerate}
\end{theorem}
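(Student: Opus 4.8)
The plan is to treat both loci through restriction to the space of lines and to split the theorem into an intersection-theoretic count (the degrees) and a local study of the singularities of the dual curve (the set-theoretic identification). Let $S=(\PP^2)^\vee\cong\PP^2$ be the space of lines and let $E\to S$ be the rank-$7$ bundle whose fibre over $L$ is the space of binary sextics $f|_L$ on $L\cong\PP^1$, so that restriction identifies $V=\RR[x,y,z]_6$ with $H^0(S,E)$ and a sextic $f$ becomes a section $s_f$. For irreducibility I would form the incidence variety $\widetilde{\mathcal{T}}=\{(f,L,p_1+p_2+p_3)\}$, where $p_1,p_2,p_3$ are distinct points of $L$ at which $f|_L$ has a double root, and project it to the configuration space $B$ of a line together with three points on it. The base $B$ is irreducible (a $\mathrm{Sym}^3\PP^1$-bundle over $(\PP^2)^\vee$), and each fibre is the \emph{linear} space of sextics vanishing to order two at each $p_i$ along $L$; a variety fibred in linear spaces of constant dimension over an irreducible base is irreducible, so $\widetilde{\mathcal{T}}$, and hence $\mathcal{T}=\overline{\mathrm{pr}_1(\widetilde{\mathcal{T}})}$, is irreducible. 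The same argument with the divisor $3p+2q$ (a flex at $p$, a simple tangency at $q$) handles $\mathcal{F}$. A dimension count then shows both are hypersurfaces: $\dim B=5$ and the six tangency conditions cut the fibre to projective dimension $21$ for $\mathcal{T}$ (resp.\ $\dim B=4$ and five conditions giving fibre dimension $22$ for $\mathcal{F}$), so $\dim\widetilde{\mathcal{T}}=\dim\widetilde{\mathcal{F}}=26$; since a general member of either locus has only finitely many tritangents (resp.\ flex-bitangents), the projection is generically finite and $\mathcal{T},\mathcal{F}$ have codimension one.

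For the degrees, note that $\deg\mathcal{T}$ equals the number of members of a general pencil $\{f+tg\}$ that possess a tritangent, i.e.\ the number of pairs $(t,L)\in\PP^1\times S$ for which the binary sextic $(f+tg)|_L$ lies in the coincident-root locus $\Sigma_{222}$ of forms with three double roots, a stratum of codimension $3$ in $\PP^6$. I would compute this as the intersection number $\int_{\PP^1\times S}\tilde s^{*}[\overline{\Sigma_{222}}]$, where $\tilde s$ is the tautological section over $\PP^1\times S$ and $[\overline{\Sigma_{222}}]$ is expressed through the Chern classes of $E$ and the hyperplane classes, the class of the stratum being supplied by the coincident-root data in \cite{LeSt}. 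The same recipe applied to $\Sigma_{321}$ (a triple root together with a double root, again codimension $3$) gives $\deg\mathcal{F}$, and carrying out these two substitutions should yield $1224$ and $306$; this bookkeeping in the twisted bundle $E$ is the computational heart of part~(1). Alternatively, and in keeping with the pencil methods of this paper, one may obtain each number as the degree in $t$ of the elimination polynomial whose roots are the tritangent resp.\ flex-bitangent members of the pencil, using the explicit ideal of the relevant coincident-root locus.

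For the identification $\mathcal{B}=\mathcal{T}\cup\mathcal{F}$ I would argue through the dual curve $C^\vee$, of degree $30$, whose nodes are the bitangents and whose cusps are the flex lines; for smooth $C$ the $324$ bitangents are exactly the nodes, counted with multiplicity $324=\tfrac12 d(d-2)(d^2-9)$. The number of \emph{distinct} bitangent lines drops below $324$ precisely when nodes of $C^\vee$ collide, and since $C$ is smooth the only contact degeneracies that force this are the three cases listed before the theorem: their binary-form restrictions land in the codimension-$3$ strata $\Sigma_{222}$, $\Sigma_{321}$, $\Sigma_{411}$, which are the codimension-$3$ strata in the closure $\overline{\Sigma_{2211}}$ of the bitangent locus. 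A tritangent produces an ordinary triple point of $C^\vee$ (three nodes coinciding) and a flex-bitangent a tacnodal point where a node meets a cusp; both strictly reduce the node count, so general points of $\mathcal{T}$ and of $\mathcal{F}$ carry fewer than $324$ bitangents, giving $\mathcal{T}\cup\mathcal{F}\subseteq\mathcal{B}$, while the reverse inclusion $\mathcal{B}\subseteq\mathcal{T}\cup\mathcal{F}\cup\mathcal{U}$ follows because any collision of bitangent lines pushes the restricted sextic into one of these three strata ($\mathcal{U}$ denoting the undulation hypersurface from $\Sigma_{411}$).

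The crux of part~(2), and the step I expect to be the main obstacle, is showing that $\mathcal{U}$ contributes nothing new. At an undulation the two tangency points of a \emph{single} bitangent collide, which on the dual side merges two ordinary cusps into one ramphoid cusp ($A_4$, hence unibranch) and leaves the \emph{number of nodes}—therefore the number of distinct bitangent lines—equal to $324$; thus an undulation lowers the inflection count but not the bitangent count, so $\mathcal{U}\not\subseteq\mathcal{B}$ and the superfluous third component disappears, yielding $\mathcal{B}=\mathcal{T}\cup\mathcal{F}$. Making this local claim rigorous—tracking the $\delta$-invariant bookkeeping $2A_2\rightsquigarrow A_4$ to confirm the node count is unchanged, and verifying that a tangency of $s_f$ to the smooth locus of $\Sigma_{2211}$ cannot occur without meeting one of the deeper strata (two distinct bitangent lines cannot collide without their four tangency points fusing into at most three double roots)—is the delicate part, alongside the exact evaluation of the two stratum classes needed to pin down the degrees $1224$ and $306$.
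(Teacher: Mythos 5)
Your handling of irreducibility and the hypersurface property is essentially the paper's own argument: the paper also works with an incidence variety (of pairs $(L,f)$ with $L$ tritangent to $V_\CC(f)$), uses the codimension-$3$ coincident-root loci of binary sextics from \cite{LeSt}, and invokes the principle that a variety fibered in irreducible fibers of constant dimension over an irreducible base is irreducible; your enriched incidence variety with marked tangency points is a minor variant, and your explicit dimension count ($5+21=26$, resp.\ $4+22=26$) matches. For the degrees, neither you nor the paper gives a self-contained derivation: you set up a Chern-class computation against the strata $\Sigma_{222}$ and $\Sigma_{321}$ and say it ``should yield'' $1224$ and $306$, while the paper cites an external computation by Vainsencher with the {\tt Maple} package {\tt schubert}, resting on the contact formulas of Colley and Kennedy \cite{CK}. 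Up to this point the two proofs are on par.

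The genuine error sits exactly in the step you call the crux: the exclusion of the undulation locus $\mathcal{U}$. The dual of an undulation point is \emph{not} a ramphoid cusp $A_4$. If $C$ is locally parametrized by $(t,t^4)$, then multiplicity and contact order swap under duality, so the dual branch is $(t^3,t^4)$, an $E_6$-singularity $u^4=v^3$ with $\delta=3$ (still unibranch). Since the dual of a smooth sextic always has degree $30$ and geometric genus $10$, its total $\delta$-invariant is fixed at $406-10=396$, so this $E_6$ point must absorb $\delta=3$ worth of nearby singularities: \emph{two cusps and one node}. Concretely, in the deformation $y=x^4-\epsilon x^2$ the bitangent $y=-\epsilon^2/4$ converges to the undulation tangent $y=0$ together with the two flex tangents; this is precisely the paper's statement that ``a bitangent and two flex lines'' come together in case (411). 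Consequently, at a generic point of $\mathcal{U}$ the number of ordinary nodes of $C^\vee$ drops to $323$, flatly contradicting your claim that the undulation ``leaves the number of nodes---therefore the number of distinct bitangent lines---equal to $324$.'' Worse, with your identification of bitangent lines with nodes of $C^\vee$, a correct local analysis would force $\mathcal{U}\subseteq\mathcal{B}$, i.e.\ the opposite of the conclusion you need, and part (2) as stated would appear false.

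What actually saves part (2) is the counting convention, not the node count: the undulation line is itself a (degenerate) bitangent, since $f|_L$ has a quadruple root and hence lies in the closure $\overline{\Sigma_{2211}}$ of the two-double-root locus; it is the limit of the colliding bitangent, tangent ``at two coinciding points.'' With bitangent lines understood this way, case (411) yields $323$ ordinary bitangents plus the undulation line, still $324$ in total, whereas at a tritangent or flex-bitangent distinct bitangent lines genuinely merge, giving $321+1=322$ and $322+1=323$ respectively --- so only $\mathcal{T}$ and $\mathcal{F}$ lie in $\mathcal{B}$. Your plan is repaired by replacing ``number of nodes of $C^\vee$'' with ``number of lines $L$ such that $f|_L\in\overline{\Sigma_{2211}}$'' and redoing the local analysis with $E_6$ in place of $A_4$; as written, the $\delta$-bookkeeping you propose to make rigorous would, carried out correctly, refute rather than prove your claim.
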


\begin{proof}
The variety of binary sextics with three
  double roots is irreducible of codimension $3$ in $\RR[x,y]_6$.
  (It is defined by $45$ quartics \cite[Table 1]{LeSt}.)
   Let $\mathcal{X}$ be the incidence
  variety of all pairs $(L,f)$ in $(\PP^2)^\vee\times\PP V$ 
  where $L$ is a tritangent of $V_\CC(f)$.
  The locus $\mathcal{T}$ is the projection of $\mathcal{X}$ onto $\PP V$.
   The  intersection of $\mathcal{X}$ with any subspace of the form
  $\{L\}\times\PP V$ for $L\in(\PP^2)^\vee$ has codimension  $3$ in
  $\PP V$. 
     Taking the union over all $L$, we
  conclude that $\mathcal{T}$ has codimension $1$. Since the
  projection of $\mathcal{X}$ onto the first factor is surjective with
  irreducible fibers of constant dimension, $\mathcal{X}$ is
  irreducible, hence so is $\mathcal{T}$. The same argument applies to~$\mathcal{F}$.
The degrees of the hypersurfaces $\mathcal{F}$ and $\mathcal{T}$
were computed for us by
Israel Vainsencher with the {\tt Maple} package {\tt schubert}.
The relevant theory is described by Colley and Kennedy in~\cite{CK}.

  To prove (2), we first note that a tritangent splits into three
  bitangents, and a flex-bitangent into two bitangents and a flex line,
  for any smooth deformation of a sextic in $\mathcal{T}$ or
  $\mathcal{F}$, respectively. This shows that $\mathcal{T}$ and
  $\mathcal{F}$ are both contained in the bitangent discriminant. For
  the reverse, we argue in the dual picture, with degenerations of
singularities on  $C^\vee$. \end{proof}

  \begin{figure}[h]
  \begin{center}
  \vspace{-0.1in}
     \includegraphics[width=9.5cm]{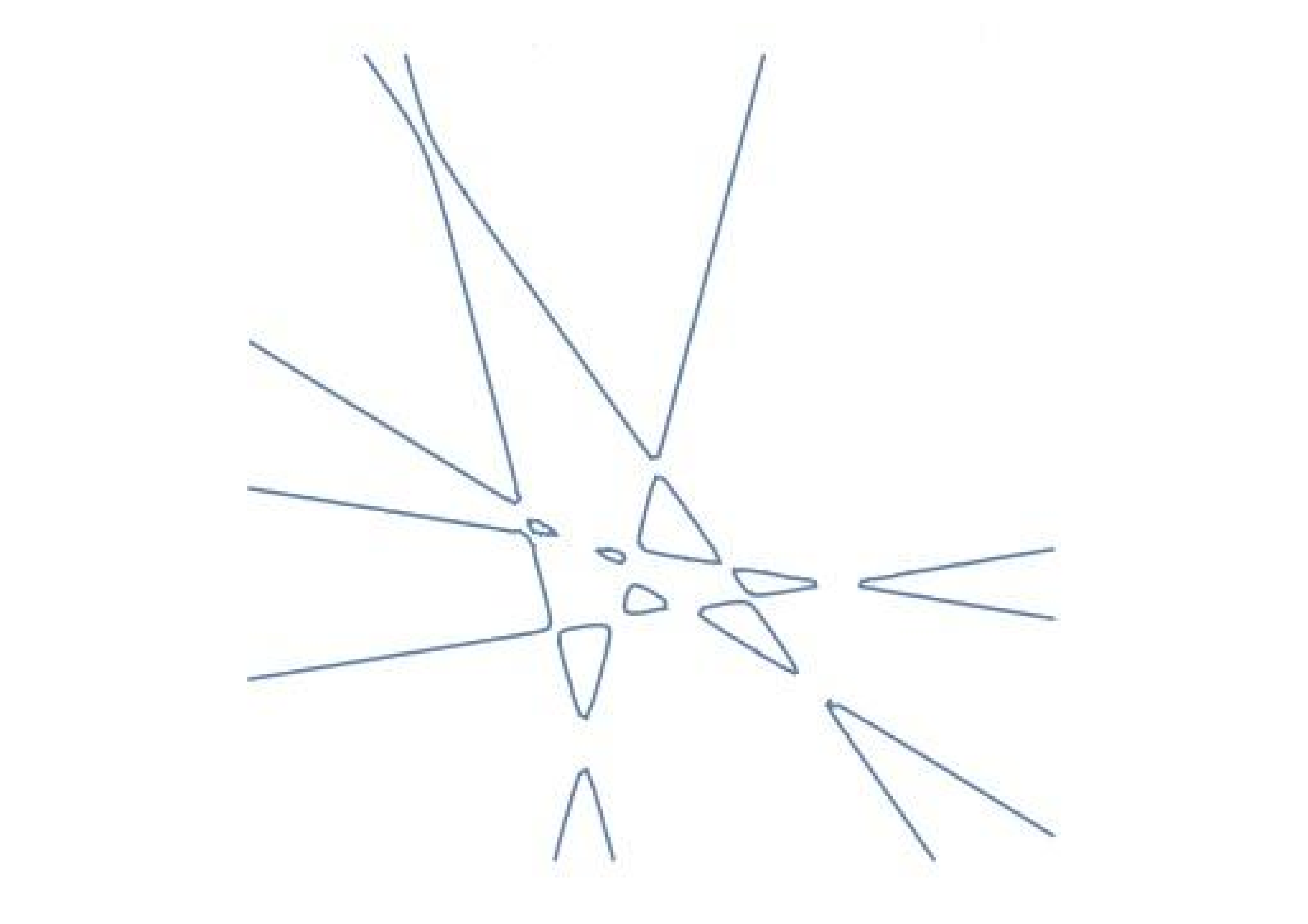}
          \vspace{-0.3in}
  \end{center}
   \caption{\label{fig:mahsa10}  
A smooth sextic with $10$ non-nested ovals whose avoidance locus is empty}
  \end{figure}

We conclude this section with the following result on the avoidance loci
of plane sextics.

\begin{corollary}
  For any integer $m$ between $0$ and $46$, there exists a smooth
  sextic $C$ in $\PP^2_\RR$ whose avoidance locus $\mathcal{A}_C$ 
  comprises exactly $m$ convex connected components.
\end{corollary}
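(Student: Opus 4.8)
The plan is to reduce the statement to a combinatorial count, realize every value $m\ge 1$ by a family of small-oval configurations, and dispatch $m=0$ by a separate, non-compact construction.

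First I would recall from the proof of Proposition~\ref{prop:plane} that, under the hypothesis of Remark~\ref{rem:relevantbitangents}, the number of convex components of $\mathcal{A}_C$ equals the number of bipartitions of the set of outermost ovals of $C$ that are realized by a line avoiding $C_\RR$. I would then concentrate the outermost ovals into small ovals placed near prescribed points $p_1,\dots,p_k\in\PP^2_\RR$, with $0\le k\le 10$ ($10$ being the maximal number of outermost ovals, cf.\ the proof of Proposition~\ref{prop:plane}). For ovals small enough, the realizable bipartitions of the ovals are exactly the strict line-separable bipartitions of the centers $\{p_1,\dots,p_k\}$, equivalently the cells of the dual arrangement of the $k$ lines $p_i^\perp$ in $(\PP^2)^\vee_\RR$. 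Since for such a compact configuration a line far from all ovals realizes the trivial bipartition, this count is at least $1$; hence small-oval configurations account for exactly the values $m\ge 1$, whereas $m=0$ must come from elsewhere.

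The coverage argument is next. In general position the $k$ centers have $\binom{k}{2}+1$ line-separable bipartitions, the maximum $\binom{10}{2}+1=46$ being attained by Example~\ref{ex:scheiderer}. Degenerating the configuration removes bipartitions in a controlled way: forcing one triple $p_i,p_j,p_l$ to be collinear destroys exactly the single bipartition isolating the middle point from the two outer ones, dropping the count by $1$; forcing $r$ centers onto a common line drops it by $\binom{r}{2}-(r-1)$ (dually, one multiplicity-$r$ point replaces $\binom{r}{2}$ simple crossings of the arrangement). Because the successive general-position values for $k=1,\dots,10$ differ by $\binom{k}{2}+1-\bigl(\binom{k-1}{2}+1\bigr)=k-1$, namely $0,1,2,\dots,9$, it suffices to fill, for each $k$, the integers just below $\binom{k}{2}+1$; this is done by imposing enough collinear triples (each lowering the count by $1$) together with, at the bottom of some ranges, a single higher-order concurrence. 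A short case analysis over $k=3,\dots,10$ then exhibits every value in $\{1,\dots,46\}$. All configurations required are classically realizable over $\QQ$, and a smooth sextic with small ovals around any prescribed set of at most $10$ points is produced by the Kunert--Scheiderer construction of Example~\ref{ex:scheiderer}: one prescribes the isolated singular points (now allowing collinearities) and perturbs. Finally, for $m=0$ I would invoke the curve of Figure~\ref{fig:mahsa10}, a smooth sextic with ten non-nested ovals arranged so that every real line meets $C_\RR$ exactly as in Example~\ref{ex:emptyavoidancelocus}; for it no line lies in $\mathcal{A}_C$, so $\mathcal{A}_C=\emptyset$.

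The main obstacle is the coverage step: one must certify that no integer in $[1,46]$ is skipped. For a fixed $k$ not all intermediate cell-counts occur (for instance $5$ is impossible with four lines, and $8$ with five lines needs a multiplicity-$4$ point rather than triples), so the argument genuinely requires combining several values of $k$ and occasionally using quadruple concurrences, plus a check that the needed collinear point sets and the accompanying small-oval sextics are simultaneously realizable. By contrast, once the non-compact example is available the case $m=0$ is immediate.
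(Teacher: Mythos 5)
Your strategy is genuinely different from the paper's, and its combinatorial core is sound: the region count of the dual line arrangement, the drop by $1$ per collinear triple and by $\binom{r-1}{2}$ per $r$-fold collinearity, and the observation that the spectrum of counts for a fixed number $k$ of lines has gaps (no $5$ with four lines, etc.), forcing a case analysis that mixes several values of $k$. For contrast, the paper avoids all of this: it notes that the empty-avoidance curve $f_0$ of Figure~\ref{fig:mahsa10} and the $46$-component curve $f_1$ of Example~\ref{ex:scheiderer} are both of Type 10nd, hence lie in one connected component $U$ of $\PP^{27}_\RR\backslash\Delta$; it removes the codimension-$\geq 2$ locus where the bitangent discriminant $\mathcal{B}$ of Theorem~\ref{thm:bitangentdisc} is degenerate, so that the remainder is path-connected, and observes that along a path from $f_0$ to $f_1$ the number of avoidance components changes by at most $1$ at each crossing of $\mathcal{B}$ (three relevant bitangents merging into a single tritangent). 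A discrete intermediate-value argument then produces every $m$ from $0$ to $46$ at once, non-constructively, with no realizability questions.

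The genuine gap in your proposal is the sentence ``a smooth sextic with small ovals around any prescribed set of at most $10$ points is produced by the Kunert--Scheiderer construction: one prescribes the isolated singular points (now allowing collinearities) and perturbs.'' Prescribing an isolated real zero at a given point imposes $3$ linear conditions on the $28$ coefficients, so the sextics singular at $9$ general points form (up to scale) a single form, namely the square of the unique cubic through the points, whose zero set is the whole cubic rather than $9$ isolated points; and for $10$ general points there is no such sextic at all. Hence the $9$- and $10$-point configurations that can serve as centers of small ovals of a smooth sextic form a proper, positive-codimension subvariety of the configuration space, and they cannot be prescribed at will. This is fatal precisely where you need it: every $m$ with $30\leq m\leq 45$ requires $9$ or $10$ outermost ovals whose centers have a specified number $j\geq 0$ of collinear triples, and you would have to show that the special locus of realizable configurations contains, for each such $j$, one with exactly $j$ collinear triples and no worse degeneracy. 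Example~\ref{ex:scheiderer} supplies only the generic case ($j=0$, no three collinear, with nine of the ten zeros on the cubic $zy^2=x^3+xz^2$), and nothing in \cite{KS} or in your argument supplies the others; this unresolved realizability question is the actual difficulty of the problem for large $m$, which the paper's path argument was designed to circumvent. For $k\leq 8$ your plan can be completed (sextics singular at $8$ suitably general points form a linear system of projective dimension $3$, from which a nonnegative form with isolated zeros exactly at the points can be extracted and then perturbed), so $m=0$, $1\leq m\leq 29$, and $m=46$ are fine; but as written the proof fails for $30\leq m\leq 45$.
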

  
\begin{proof}
  Let $f_1$ be a sextic of Type 10nd whose avoidance locus has $46$
  components, as in Example \ref{ex:scheiderer}. Let $f_0$ be
  a sextic of Type 10nd with empty avoidance locus, for instance
 $$ f_0 \,\,=\,\, \bigl(20x^2-(y{+}10z)^2+z^2\bigr)\bigl(21y^2-(x{-}10z)^2+z^2\bigr)\bigl( 20 (x-5z+y)^2
 -(y{+}10z)^2+z^2\bigr)+z^6 .$$
 The real picture of such a curve is shown in Figure \ref{fig:mahsa10}.
       Let $U$ be the connected component of
  $\PP^{27}_\RR \backslash \Delta$ that contains both $f_0$ and $f_1$.
  Consider the bitangent discriminant $\mathcal{B}$ of Theorem
  \ref{thm:bitangentdisc}. There is an open dense subset
  $\mathcal{B}_0$ of $\mathcal{B}$, whose points represent curves with
  a single tritangent line or a single flex-bitangent. The
  exceptional locus $Z=\mathcal{B}\setminus\mathcal{B}_0$ has
  codimension at least $2$ in $\PP^{27}_\RR$, hence $V=U \backslash Z$
  is path-connected. Fix a path
  $\gamma\colon [0,1]\to V$ with $\gamma(0)=f_0,
  \,\gamma(1)=f_1$. Let $\{\gamma(t_1),\dots,\gamma(t_k)\}$ be its
  intersection points with $\mathcal{B}$. Since
  $\gamma(t_i)$ lies in $\mathcal{B}_0$, the number of
  connected components of the avoidance locus of $V_\RR(\gamma(t))$
  cannot change by more than $1$ in a neighborhood of $t_i$. Indeed,
  at a point where that number drops by
  one, exactly three relevant bitangents come together, giving a
  sextic with a single tritangent in $\mathcal{B}_0$.
    Hence any number of convex avoidance components
  between $0$ and $46$ is realized along the path $\gamma$.
\end{proof}

\section{Experiments and K3 Surfaces}

We begin this section with a report on further experiments on
reality questions for plane sextics. Our findings are summarized
in Table \ref{tab:flexetc}. Thereafter, we discuss implications
for real K3 surfaces, and we show how to construct
 quartic surfaces in $\PP^3_\RR$ with prescribed topology.

Table \ref{tab:flexetc} summarizes experimental data on the numbers
of real features associated with real sextics in $\PP^2_\RR$.
Certified samples were drawn in the vicinity
 of each of our current $64$ representatives,
using the local exploration method that is
described after Proposition \ref{prop:sylvester}.

Each row of Table \ref{tab:flexetc}
has five entries: the name of the 
rigid isotopy type, numbers of real inflection points,
numbers of real eigenvectors, numbers of real bitangents,
and one real rank.  The numbers are ranges
of integers that were observed in our experiments. For instance,
for Type $(11)1$,  we found numbers ranging between $20$ and $66$  of
real bitangents among $324$ complex ones.  In some cases,
all samples gave the same number of real solutions.
For instance, all our Type $(71)$ sextics had  $108$ 
real bitangents. 
Each entry in Table~\ref{tab:flexetc} can be regarded
as a {\bf conjecture}. For example, we conjecture that every
smooth sextic of Type $(71)$ has exactly $108$ real bitangents.
For real inflection points and real eigenvectors we performed exact
computations. That means that we are sure that all numbers listed in
the table actually occur. However, we do not know whether there are more possible numbers.
For bitangents the calculations are more involved and we applied numerical methods. This means that, for some instances, the number of real solutions might be undercounted.
This can happen when two or more real bitangents lie very close to each other. The computations for estimating the real rank are even more delicate and were also accomplished numerically.

Bitangents and their applications
were discussed in detail in the previous section.
We now give the definitions needed to understand
the other three columns in Table~\ref{tab:flexetc}.

The second column concerns {\em inflection points} of
a smooth sextic $V_\CC(f) \subset \PP^2$. There are
 $72 = 6 \cdot 12 $ complex inflection points. They are computed as the
 solutions of the equations
$$ f(x,y,z) \,\,= \,\, {\rm det} \begin{pmatrix}
 \frac{\partial^2 f}{\partial x^2} &
 \frac{\partial^2 f}{\partial x \partial y} &
 \frac{\partial^2 f}{\partial x \partial z} \smallskip \\
\frac{\partial^2 f}{\partial x \partial y} &
 \frac{\partial^2 f}{\partial y^2} &
 \frac{\partial^2 f}{\partial y \partial z} \smallskip \\
\frac{\partial^2 f}{\partial x \partial z} &
 \frac{\partial^2 f}{\partial y \partial z} &
 \frac{\partial^2 f}{\partial z^2} 
\end{pmatrix} 
\,\, = \,\,\, 0 .
$$
A curve of degree $d$ has $3d(d-2)$ complex inflection points.  A
classical result due to Felix Klein states that the number of real
inflection points is at most $d(d-2)$. Brugall\'e and L\'opez de
Medrano \cite{BL} showed that this upper bound is tight for all
degrees $d$. Hence, for a general sextic in $\PP^2_\RR$, the number of
real inflection points is an even integer between $0$ and $24$. The
column labeled ``Flex'' shows the empirical distribution on the $64$
rigid isotopy types.

\begin{table}
$$ \begin{small}
\begin{matrix} 
{\rm Type} 	&               {\rm Flex}		&    {\rm Eigenvec}     & 	{\rm Bitang}		&	{\rm Rank}	\\
     0     	&  		0			&	3{-}31  	&	12			&	3		\\
     1     	& 		0{-}12			&	3{-}31^*	&	12{-}56			&	3		\\
     (11)  	& 		0{-}14			&	11{-}31^* 	&	12{-}66			&	10		\\
     2     	&		0{-}8			&	5{-}31^*	&	12{-}52			&	13		\\
     (21) 	&		0{-}10			&	7{-}31^*	&	16{-}86			&	14		\\
     (11)1 	&		2{-}6			&	7{-}31^*	&	20{-}66			&	15		\\
      3    	&		0{-}8			&	7{-}31^*	&	24{-}94			&	13		\\
     ({\rm hyp})& 	 	0{-}14			&	11{-}31^*	&	12{-}52			&	13		\\
     (31)  	&		2{-}10			&	19{-}31^*	&	24{-}90			&	13		\\
     (21)1 	&  		0{-}6			&	11{-}31^*	&	28{-}72			&	14		\\
     (11)2 	& 		0{-}4			&	11{-}31^* 	&	32{-}82			&	13		\\
       4   	&  		0{-}2			&	11{-}31^*	&	36{-}54			&	11		\\
(41){\rm nd}	&		14{-}16			&	21{-}31^*	&	48{-}90			&	16		\\
(41) {\rm d}	& 		12{-}14			&	27{-}31^*	&	98{-}104		&	14		\\
     (31)1  	& 		2{-}8			&	15{-}31^*	&	40{-}86			&	14		\\
(21\!)2{\rm nd}	&		10{-}16			&	17{-}31^*	&	54{-}82			&	20		\\
(21)2{\rm d} 	& 		8{-}16			&	19{-}31^*	&	60{-}70			&	17		\\
    (11)3 	& 		8{-}12			&	19{-}31^*	&	48{-}94			&	14		\\
       5  	& 		2{-}10			&	19{-}31^*	&	52{-}112			&	15		\\
    (51) 	&		12{-}16			&	21{-}31^*	&	54{-}64			&	14		\\
   (41)1 	&		22			&	27{-}31^*	&	90{-}104		&	14		\\
   (31)2 	&		14{-}18			&	27{-}31^*	&	126{-}130		&	14		\\
   (21)3 	&		16			&	27{-}31^*	&	112{-}116		&	14		\\
   (11)4 	& 		6{-}10			&	25{-}31^*	&	76{-}106		&	15		\\
     6  	& 		10{-}12			&	23{-}31^*	&	78{-}108			&	14		\\
   (61) 	&  		16			&	27{-}31^*	&	78{-}88			&	14		\\
(51\!)1{\rm nd} &	 	16			&	23{-}25		&	110{-}124			&	15		\\
(51)1 {\rm d}	&  	 	20{-}24			&	29		&	136			&	16		\\
     (41)2   	&		16{-}20			&	29{-}31		&	126{-}128		&	14		\\
(31\!)3{\rm nd} &		12			&	25{-}31^* 	&	124{-}148		&	15		\\
(31)3{\rm d}	&		20{-}22			&	29		&	132			&	16		\\
    (21)4 	& 		14{-}20			&	27{-}31^*	&	138{-}142		&	15		\\
         \end{matrix} \qquad \quad
     \begin{matrix}
 {\rm Type} 	 &  		{\rm Flex}		&    {\rm Eigenvec}     & 	{\rm Bitang}		&	{\rm Rank} \\
(\!11\!)5{\rm nd}& 		6{-}16			&	29{-}31^* 	&	116{-}122		&	16			\\
   (11)5  {\rm d}&		8{-}16			&	25{-}31^* 	&	120{-}128		&	16			\\
     7 		 &        	4{-}14			&	25{-}31^*	&	96{-}124		&	14			\\
     (71) 	 &  		20{-}24			&	29		&	108			&	16			\\
     (61)1  	 &  		20{-}22			&	25		&	104{-}214		&	15			 \\
     (51)2  	 &  		22			&	25{-}31		&	226{-}228			&	15			\\
     (41)3   	 & 		20			&	23{-}25		&	154{-}214		&	14			 \\
     (31)4  	 & 		22			&	21		&	162{-}214			&	14			 \\
      (21)5 	 & 		16{-}20			&	29{-}31	 	&	168			&	13			 \\
     (11)6  	 & 		12{-}14			&	27{-}31^*	&	172{-}176			&	14			 \\
         8  	 &  		0{-}12			&	23{-}31^*	&	124{-}142			&	13			\\
(81) {\rm nd} 	 & 		18{-}22			&	23		&	122{-}196			&	14			\\
(81)  {\rm d} 	 & 		18{-}24			&	29		&	124{-}132			&	12			 \\
     (71)1  	 &  		14{-}18			&	21{-}31		&	104{-}240		&	13			\\
     (61)2  	 &  		18{-}20			&	23{-}31		&	228{-}276			&	13			\\
     (51)3  	 &  		22			&	25	 	&	192{-}254			&	13			 \\
(41)4 {\rm nd} 	 &		14{-}16			&	25		&	188{-}220			&	 9			\\
(41)4  {\rm d} 	 & 		18			&	25		&	194{-}230			&	11			 \\
     (31)5  	 &  		20			&	25{-}31		&	198{-}260			&	13			 \\
     (21)6  	 &  		20			&	23{-}31		&	242{-}258		&	15			 \\
     (11)7  	 &		14{-}16			&	29{-}31		&	216			&	14			 \\
 9 {\rm nd} 	 & 		8{-}16			&	25{-}31^*	&	162{-}172		&	15			 \\
 9  {\rm d} 	 & 		4{-}16			&	29{-}31^*	&	156			&	15				\\
       (91) 	 & 		18{-}22			&	23		&	124{-}236			&	13			 \\
      (81)1 	 & 		16{-}20			&	23{-}31		&	162{-}240			&	14			 \\
      (51)4 	 & 		20			&	27	 	&	232{-}234			&	10			 \\
      (41)5 	 &  		18{-}20			&	27{-}31   		&	232			&	10			\\
     (11)8  	 &  		14{-}18			&	25{-}31		&	142{-}210			&	13			\\
        10  	 &  		0{-}24			&	21{-}31^*	&	192			&	12			\\
     (91)1  	 &  		18{-}22			&	25{-}31		&	200{-}284			&	14			\\
     (51)5  	 &  		20{-}22			&	25{-}31		&	276{-}306		&	10			\\
     (11)9  	 &   		16{-}20			&	25{-}31		&	174{-}250		&	14			\\
\end{matrix}
\end{small}
$$
\caption{Computational results for the number of real solutions
for inflection points, eigenvectors, bitangents and real rank
among the $64$ rigid isotopy classes of smooth sextics in $\PP_\RR^2$.
\label{tab:flexetc}}
\end{table}

The third and fifth column pertain to the study of tensors in multilinear algebra.
Here we identify the space $ \RR[x,y,z]_6$ of ternary sextics
with the space of symmetric tensors of format
$3 {\times} 3 {\times} 3 {\times} 3 {\times}3 {\times} 3 $.
Such a symmetric tensor $f$ has $28$ distinct entries, and these
are the coefficients $c_{ijk}$ of the sextic. A vector $v \in \CC^3$
is an {\em eigenvector} of $f$ if $v$ is parallel to the gradient
of $f$ at $v$. Thus the eigenvectors correspond to the solutions
in $\PP^2_\CC$ of the constraint
\begin{equation}
{\rm rank} \begin{pmatrix} x & y & z \smallskip \\
\frac{\partial f}{\partial x} & \frac{\partial f}{\partial y} & \frac{\partial f}{\partial z} 
\end{pmatrix} \,\,\, = \,\,\, 1 .
\label{eq:eigvec} 
\end{equation}

A general ternary form $f$ of degree $d$ has $ d^2 -d + 1$ eigenvectors \cite[Theorem 2.1]{ASS}.
The eigenvectors are the critical points of the
optimization problem of maximizing $f$
on the unit sphere $\mathbb{S}^2 = \{(x,y,z) \in \RR^3: x^2+y^2 + z^2 = 1\}$.
Since $f$ attains a minimum and a maximum on $\mathbb{S}^2$, the
number of real eigenvectors is at least $2$. 
We note that the upper bound $d^2 - d+1$ is attained over $\RR$.
If $f$ is a product of $d$ general linear forms, then
all its complex eigenvectors are real. This was shown in \cite[Theorem 6.1]{ASS}.
For $d=6$, we conclude 
 that the number of real eigenvectors of a general ternary
sextic is an odd integer between $3$ and~$31$.
The column labeled ``Eigenvec'' shows the empirical distribution
on the rigid isotopy types. For many rigid isotopy types we found instances that attain the maximal number $31$ of real eigenvectors. 
Among them are the $35$ types that have unions
 of six real lines in general position in their closure. These types are
marked with an asterisk next to the number $31$.
 We found these by   perturbing each 
 of the four combinatorial types of arrangements of six lines in general 
 position in~$\mathbb{P}_\mathbb{R}^2$.
 This search process resulted in   $35$ of the rigid isotopy types.
 This is the result stated in Proposition \ref{prop:thirtyfive}.
 The computation we described is the proof.

A theoretical study of real eigenvectors was undertaken by
Maccioni in \cite{Mac}. He proved that the number of real eigenvectors 
of a ternary form is bounded below by $2 \omega+ 1$, where 
$\omega$ is the number of ovals.
Our findings in the third column of Table~\ref{tab:flexetc}
confirm  this theorem. Moreover, there are seven types where our computations prove the converse, namely that all values between this lower bound and the upper bound $31$ are realized in these types.

Every tensor is a sum of rank one tensors. The smallest number of
summands needed in such a representation is the {\em rank} of
that tensor. This notion depends on the underlying field.
Symmetric tensors of rank $1$ are powers of linear forms (times a
constant). Hence, the rank $r$ of a ternary form $f$ of degree $d$ is the 
minimum number of summands in a representation
\begin{equation}
\label{eq:frank}
 f(x,y,z) \quad = \quad
\sum_{i=1}^r \lambda_i (a_i x + b_i y + c_i z)^d.
\end{equation}
The exact determination
of the real rank of a sextic $f$ is very difficult. The task is
 to decide the solvability over $\RR$
of the equations
in the unknowns $\lambda_i,a_i,b_i,c_i$
obtained by equating coefficients in (\ref{eq:frank}).
This computation is a challenge for both symbolic and numerical methods.
There is no known method that is guaranteed to succeed in practice.
If $f$ is a generic sextic in $\RR[x,y,z]_6$ then the complex rank of $f$ is $10$,
and the real rank of $f$ is an integer between $10$ and $19$.
This was shown in
\cite[Proposition 6.3]{MMSV}.
This upper bound is probably not tight. 

We experimented with the software {\tt tensorlab} \cite{tensorlab}.
This is a standard package for tensors, used 
in the engineering community. This program furnishes
a local optimization method for the following problem:
given $f$ and $r$, find a sextic $f^*$ of rank $r$ that is closest to $f$,
with respect to the Euclidean distance on the tensor space $(\RR^3)^{\otimes 6}$.
If the output $f^*$ is very close to the input $f$, we can be confident that $f$
has real rank $\leq r$. If $f^*$ is far from $f$, even after many tries
with different starting parameters,  then we believe
that $f$ has real rank $\geq r+1$. However, {\tt tensorlab} does not
furnish any guarantees. One needs to rerun the same instance
many times to achieve a lower bound on the real rank with high confidence.

The last column  of Table \ref{tab:flexetc} suggests
the real rank for each of the $64$ 
sextics listed in Section~2. In each case,
we report our best guess on the lower bound, based on numerical
experiments with that instance. Obtaining these numbers
with high confidence proved to be difficult.
We had considerable help from Anna Seigal and Emanuele Ventura
in carrying this out. 
Most puzzling is the real rank $20 $
we found for our representative of type
$(21)2{\rm nd}$, as this seems to contradict
\cite[Proposition 6.3]{MMSV}.
 This is either an error arising in our numerical method,
or the sextic lies on some exceptional locus.
Clearly, some further study is needed.

We did not yet attempt the same calculation for 
a larger sample of sextics in each rigid isotopy class.
This would be a very interesting future project at the
interface of numerics and real algebraic geometry.
The guiding problem  is to find the maximal
generic real rank among sextics.
To underscore the 
challenge, here is another open question:
the real rank of the monomial $x^2 y^2 z^2$
is presently unknown. It is either $11$, $12$ or $13$,
by \cite[Example 6.7]{MMSV}.

\smallskip

We now shift gears and turn to the construction of real K3 surfaces.
Two basic models of algebraic K3 surfaces
are quartic surfaces in $\PP^3$ and double-covers
of $\PP^2$ branched at a sextic curve. Thus, each
of our ternary sextics in Section 2 represents
a K3 surface over $\QQ$. 
Suppose we can write $ f = v_3^2-v_2 v_4$ where $v_i$ is a
form of degree $i$ in $x,y,z$.
Then $F=v_2 w^2+2 v_3 w + v_4$ is a quartic in four variables
that realizes the K3 surface with one singular point at $(0:0:0:1)$.
Blowing up that singular point gives the
 K3 surface encoded by $f$. Perturbing the coefficients of $F$
 gives a smooth quartic surface with similar properties.

The topology of the real surface
$V_\RR(F)$ is determined by the topological type of the real curve $V_\RR(f)$ and its sign behavior.
By perturbing $F$ to a polynomial $\tilde F$, we can obtain
a smooth quartic surface  whose real part $V_\RR(\tilde F)$ has the desired topology. See \cite{utkinquartic} for  details.

 The construction methods in Section \ref{sec:construct}
 reveal that many of the $64$ types can be 
 realized by adding a positive sextic to the 
 product of a quartic and a conic. 
 For such types, the sextic has the desired form
 $ f = v_3^2-v_2 v_4$. The resulting quartic $F$ has nice coefficients in $\QQ$.
 
 The real part of a smooth K3 surface is always an orientable surface. It has at most one connected component with 
 nonpositive Euler characteristic --- and therefore is determined (up to homeomorphism) by its total Betti number and its 
 Euler characteristic --- except when it is the union of two tori.  If it is nonempty, by Smith-Thom inequality its total Betti 
 number ranges between $2$ and $24$, and according to the Comessatti inequalities its Euler characteristic ranges 
 between $-18$ and $20$.
 There are $64$ possible combinations of these two numbers;
 they are displayed in \cite[Table (3.3), page 189]{silholbook}.
  All these $64$ possibilities can be realized as a quartic surface in $\PP^3$. These topological classification
  was studied by Utkin \cite{utkinquartic}.  The isotopic and rigid isotopic classifications are due to  Kharlamov \cite{Kha1,Kha2}.
  For proofs and further information we refer to Silhol's book \cite[Section VIII.4]{silholbook}.  We conclude by presenting two 
 explicit quartic surfaces that realize the minimal and the maximal Euler characteristic.

\begin{example} \rm Consider the smooth quartic surface $V_\CC(\bar F) \subset
\PP^3$ defined by the polynomial
$$ \begin{matrix} 
\bar F &=&  100 w^4 - 12500 w^2 x^2 + 104 x^4 - 12500 w^2 y^2 + 1640 x^2 y^2 + 
 1550 y^4 
  + 12500 w^2 y z \\ & & - 75 x^2 y z - 1552 y^3 z   + 9375 w^2 z^2 - 
 487 x^2 z^2 - 1533 y^2 z^2 + 354 y z^3 + 314 z^4. 
 \end{matrix}
 $$
Its real locus $V_\RR(\bar F)$ is a connected orientable surface of genus $10$. The
 Euler characteristic of that surface is $-18$. This is the smallest possible 
 Euler characteristic for a 
real K3 surface.
 
 We constructed the quartic $\tilde F$ from a sextic $f$ with ten non-nested ovals.
 Namely, $f = v_3^2 - v_2 v_4$, where
  $v_4 = 33001 x^4 + 131227 x^2 y^2 + 30980 y^4 - 11842 x^2 y z -  62072 y^3 z - 
  155986 x^2 z^2 - 122652 y^2 z^2 + 56672 y z^3 +  100672 z^4$, $v_3 = 10^{-3}z^3$ 
  and $v_2 = -4 x^2 - y^2 + 2 y z + 3 z^2$. Our code
  {\tt SexticClassifier} easily confirms that $V_\RR(f)$ has Type 10.
  The quartic $  F = v_2 w^2 + 2 v_3 w + v_4$ has a node at $(0:0:0:1)$.
  The projection from this node is ramified at $V_\CC(f)$.
  The K3 surface defined by $\tilde F = F   +\epsilon w^4$  has the desired properties
  for $\epsilon=10^{-10}$. Starting from $\tilde F$, we constructed $\bar F$
  using the techniques discussed in Section \ref{sec:construct} for
   improving integer coefficients.
 \end{example}

Our final example is dedicated to
the algebraic geometer Karl Rohn,
whose article \cite{Rohn}
inspired this project. Rohn
 was a professor
at the University of Leipzig from 1904 until 1920.

\begin{example} \rm 
 We start with Rohn's imaginary symmetroid in \cite[\S 9]{Rohn}. This is the quartic
 $$ G \quad = \quad \tau(s_1^2 - 6 s_2)^2 + (s_1^2 - 4 s_2)^2 - 64 s_4,$$ 
 where $s_i$ is the $i$th elementary symmetric polynomial in $x,y,z,w$
  and $\tau=(16\sqrt{10}-20)/135$. This is a nonnegative form with exactly $10$ real zeros. Subtracting a positive definite form multiplied with a small postive scalar gives a quartic surface with ten connected components. 
Using the techniques in
 Section \ref{sec:construct} we get the following  quartic with nice 
 integer coefficients:
 $$ \bar G \quad = \quad 6 s_1^4 - 53 s_1^2 s_2 + 120 s_2^2 - 320 s_4.$$
The surface $V_\RR(\bar G)$ is the disjoint union of ten spheres, so it has
Euler characteristic $20$.
\end{example}

\bigskip \bigskip

\begin{small}
\noindent
{\bf Acknowledgements.}
We are grateful to Paul Breiding, Claus Scheiderer, Anna Seigal,
Israel Vainsencher, Emanuele Ventura and Oleg Viro for their help.
This project was completed following a visit by Daniel Plaumann to MPI
Leipzig.  Nidhi Kaihnsa and Mahsa Sayyary Namin were funded by the
International Max Planck Research School {\em Mathematics in the
  Sciences} (IMPRS). Daniel Plaumann was supported through DFG grant
PL 549/3-1.  Bernd Sturmfels acknowledges  support by the US
National Science Foundation (DMS-1419018) and the Einstein Foundation
Berlin.
\end{small}

 \begin{small}
 
 \end{small}

\bigskip

\noindent
\footnotesize {\bf Authors' addresses:} \smallskip \\
\noindent Nidhi Kaihnsa, MPI Leipzig, \texttt{Nidhi.Kaihnsa@mis.mpg.de} \\
 Mario Kummer, MPI Leipzig, \texttt{Mario.Kummer@mis.mpg.de} \\
Daniel Plaumann, TU Dortmund, \texttt{Daniel.Plaumann@math.tu-dortmund.de} \\
Mahsa Sayyary Namin, MPI Leipzig, \texttt{Mahsa.Sayyary@mis.mpg.de} \\
Bernd Sturmfels, MPI Leipzig,  \texttt{bernd@mis.mpg.edu}  \
and UC Berkeley, \texttt{bernd@berkeley.edu}

\medskip

\noindent
Max-Planck Institute for Mathematics in the Sciences,
Inselstra\ss e 22, 04103 Leipzig, Germany \ \ \

\noindent 
Technische Universit\"at Dortmund, Fakult\"at f\"ur Mathematik,
44227 Dortmund, Germany

\noindent
Department of Mathematics, University of California, Berkeley, CA 94720, USA

\end{document}